\documentclass[12pt, reqno]{amsart}
\usepackage{amsmath,amssymb,amsthm,graphicx,mathrsfs,url} % Required for inserting images
\usepackage{amsmath,amssymb,amsthm,tikz}
\usetikzlibrary{decorations.pathreplacing}
\usepackage[toc,page]{appendix}
\usepackage[hidelinks]{hyperref}

\usepackage{comment}

\usepackage[margin=0.9in]{geometry}
\usepackage{hyperref}
\usepackage{comment}
\newcommand{\R}{\mathbb R}

\newtheorem{theorem}{Theorem}[section]
\newtheorem{lemma}[theorem]{Lemma}
\newtheorem{proposition}[theorem]{Proposition}
\newtheorem{definition}[theorem]{Definition}
\newtheorem{corollary}[theorem]{Corollary}

\newtheorem{conjecture}{Conjecture}

\theoremstyle{remark}
\newtheorem{remark}[theorem]{Remark}

\newcommand{\eps}{\varepsilon}
\newcommand{\la}{\lambda}

\newcommand{\tr}{\mathrm{tr}}

\numberwithin{equation}{section}

\title[Marked Poincar{\'e} rigidity near hyperbolic metrics in dimension 3]
{Marked Poincar{\'e} rigidity near hyperbolic metrics and injectivity of the Lichnerowicz Laplacian in dimension 3}
\author{Karen Butt, Alena Erchenko, Tristan Humbert,\\ Thibault Lefeuvre, Amie Wilkinson}
\date{}

\begin{document}

\begin{abstract}
Let $M$ be a compact manifold without boundary equipped with a Riemannian metric $g$ of negative curvature.
In this paper, we introduce the marked Poincar{\'e} determinant (MPD), a homothety invariant of $g$ depending on differentiable periodic data of its geodesic flow.
The MPD associates to each free homotopy class of closed curves in $M$ a number which measures the unstable volume expansion of the geodesic flow along the associated closed geodesic. 
We prove a local MPD rigidity result in dimension 3: if $g$ is sufficiently close to a hyperbolic metric $g_0$ and both metrics have the same MPD, then they are homothetic.
As a by-product of our proof, we show the Lichnerowicz Laplacian of $g_0$ is injective on the space of trace-free divergence-free symmetric 2-tensors, which, to our knowledge, is the first result of its kind in negative curvature.  
\end{abstract}

\maketitle
\section{Introduction}

In this paper, we introduce an invariant of a closed, negatively curved manifold that measures the unstable volume expansion of its geodesic flow around periodic orbits.  This invariant, the {\em marked  Poincar\'e determinant},
can be viewed as a first-order variant of the marked length spectrum, as it depends on differentiable data of the flow.
We prove several rigidity results for this invariant on hyperbolic manifolds, including a local analogue of Hamenst\"adt's marked length rigidity for hyperbolic $3$-manifolds \cite{ham99}.

To prove our main result, we establish a geometric fact of independent interest. Namely, on a closed hyperbolic $3$-manifold, the 
 Lichnerowicz Laplacian on TT tensors (see Section \ref{sec:Lich}) is injective.  To our knowledge, this is the first result of its kind in negative curvature.

We now state our results.
Throughout this paper, we assume $M$ is a $C^\infty$ compact manifold without boundary (which we will from now on refer to as a \emph{closed} manifold) of dimension $n \geq 2$, and $g$ is a $C^{\infty}$ Riemannian metric on $M$ of negative sectional curvature { (which we will from now on refer to as a \emph{negatively curved} metric)}. We let $(\phi^t)_{t \in \R}$ denote the geodesic flow of $(M, g)$.

\subsection{Marked Poincar{\'e} rigidity}
We first define the marked Poincar\'e determinant.
Let $SM$ denote the unit tangent bundle of $(M, g)$ and let $X$ be the vector field on $SM$ which generates $(\phi^t)_{t\in \mathbb R}$.
Suppose $v\in SM$ is tangent to a closed geodesic $\gamma$, and let $T = \ell_g(\gamma)$ be the period of $v$. 
Consider a Poincar{\'e} section of $\gamma$ tangent to $X(v)^{\perp} \subset T_v SM$, where the orthogonal complement is taken with respect to the Sasaki metric.
The associated \emph{linearized Poincar{\'e} map} of $\gamma$ is the map 
\[D_v \phi^T : X (v)^{\perp} \to X(v)^{\perp} \]
obtained by restricting $D_v \phi^T : T_v SM \to T_v SM$. 

Recall that since $(M,g)$ is negatively curved, the flow $(\phi^t)_{t\in \mathbb R}$ is \emph{Anosov}, and 
the Anosov splitting $T(SM) = \R X \oplus E_s \oplus E_u$ is invariant under $D_v \phi^t$. In particular, the stable (resp. unstable) bundle $E_s(v)$ (resp. $E_u(v)$) is invariant under $D_v\phi^T.$

Now let $\mathcal{C}$ denote the set of free homotopy classes of closed curves in $M$.
For a negatively curved metric $g$, every nontrivial free homotopy class $c$ of closed curves in $M$ contains a unique geodesic representative $\gamma_g(c)$.
This allows us to make the following definition:

\begin{definition} 
We define the \emph{marked Poincar{\'e} determinant} $($MPD$)$ to be 
\begin{equation*}%\label{eq:Poincare}
\mathcal P_g:  \mathcal C \to \mathbb R, \quad
c \mapsto \det \big( D_v \phi^{T}\big|_{E_u}\big),
\end{equation*}
where $v = (\gamma_g(c))'(0)$ and $T = \ell_g(\gamma_g(c))$.
\end{definition}

In the language of smooth dynamics, the functional $\log (\mathcal{P}_g)$ measures the integrals of the {\em unstable Jacobian} around periodic orbits (see \eqref{eq:logP-intro}).

\begin{remark}\label{rem:homothety} The MPD is invariant under homothety:
for any constant $c > 0$ and any smooth $\phi \in {\rm Diff}^0(M)$ (the elements of ${\rm Diff}(M)$ homotopic to the identity), we have $\mathcal{P}_g = \mathcal{P}_{c \, \phi^* g}$. 
\end{remark}

The main result of this paper is the following rigidity result, which says that in a neighborhood of a hyperbolic metric $g_0$ in dimension 3, the functional $\mathcal{P}_g$ characterizes $g_0$ up to homothety.
\begin{theorem}
\label{theoMain}
Let $(M,g_0)$ be a closed hyperbolic $3$-manifold. Then there is  $N\in \mathbb N$ and $\epsilon>0$ such that for any {smooth negatively curved metric} $g$ with $\|g -g_0\|_{C^N}<\epsilon$, 
one has $\mathcal P_{g}=\mathcal P_{g_0}$ if and only if there exists { a smooth diffeomorphism} $\phi \in {\rm Diff}^0(M)$ together with a constant $c > 0$ so that 
$\phi^* g = c g_0$. 
\end{theorem}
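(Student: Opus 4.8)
\emph{Overview and gauge reduction.}
The ``if'' direction is Remark \ref{rem:homothety}; we focus on ``only if'', following the standard template for local rigidity of periodic data on Anosov manifolds: reduce to a linear problem by gauge-fixing, prove injectivity of the linearized functional, and upgrade to local rigidity by a Fredholm argument. Suppose $\mathcal P_g=\mathcal P_{g_0}$ with $\|g-g_0\|_{C^N}<\epsilon$. By homothety invariance we may rescale $g$, and by the usual slice theorem for the action of $\mathrm{Diff}^0(M)$ on metrics near $g_0$ we may, after replacing $g$ by $c\,\phi^*g$ for a suitable $c>0$ and $\phi\in\mathrm{Diff}^0(M)$ (still $C^N$-close to $g_0$), assume $g=g_0+h$ with $h$ divergence-free with respect to $g_0$ and $\|h\|_{C^N}$ small. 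It then suffices to show $h=0$: unwinding the gauge afterwards gives $\phi^*g=c\,g_0$.

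\emph{The linearization.}
Since $(M,g_0)$ has constant curvature $-1$, its infinitesimal unstable Jacobian is the constant function $\dim M-1=2$ on $SM$, so $\log\mathcal P_{g_0}(c)=2\,\ell_{g_0}(\gamma_{g_0}(c))$ for all $c$. Writing $g_\tau=g_0+\tau h+O(\tau^2)$, $\gamma_\tau=\gamma_{g_\tau}(c)$, and $\log\mathcal P_{g_\tau}(c)=\int_{\gamma_\tau}J^u_{g_\tau}$, and using that $J^u_{g_0}$ is constant (so the only first-order effect of the orbit moving is the change of its length), one finds
\[
\frac{d}{d\tau}\Big|_{\tau=0}\log\mathcal P_{g_\tau}(c)
=\frac{\dim M-1}{2}\,I^{g_0}_2 h(c)+I^{g_0}_1\big(\dot J^u_h\big)(c),
\]
where $I^{g_0}_2$ and $I^{g_0}_1$ are the X-ray transforms over closed $g_0$-geodesics acting on symmetric $2$-tensors and on functions, and $\dot J^u_h$ is the first variation of the unstable Jacobian, a function on $SM$ depending linearly (through first covariant derivatives of $h$ and the linearized Jacobi/Riccati equation for the unstable Lagrangian subspaces) on $h$. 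Thus the linearization of $\log\mathcal P$ at $g_0$ is a ``generalized X-ray transform'' $h\mapsto I^{g_0}(\Pi h)$ with $\Pi h=\tfrac{\dim M-1}{2}\pi_2^*h+\dot J^u_h$, and infinitesimal MPD rigidity is the statement that this map is injective on $g_0$-divergence-free tensors.

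\emph{Reduction to the Lichnerowicz Laplacian.}
To prove this, suppose $I^{g_0}(\Pi h)=0$. By a Livsic/Anosov argument there is $u\in C^\infty(SM)$, of finite degree in the fibers, solving the transport equation $Xu=\Pi h$. Expanding in fiberwise spherical harmonics and invoking the Pestov identity and the explicit commutator relations on $SM$ for the hyperbolic metric --- exactly as in the proofs of $s$-injectivity of the X-ray transform on negatively curved manifolds --- the top-degree components force the trace-free part of $h$ to solve an elliptic equation which, after the standard identifications, is precisely $\Delta_L(\cdot)=0$, while the remaining (conformal/trace) components force $\mathrm{tr}_{g_0}h$ to be constant, hence absorbable into the homothety. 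Since $(M,g_0)$ is a closed hyperbolic $3$-manifold, the Lichnerowicz Laplacian $\Delta_L$ is injective on trace-free divergence-free symmetric $2$-tensors (Section \ref{sec:Lich}); combined with $h$ being divergence-free, this gives $h=0$ modulo a constant conformal factor, which establishes infinitesimal MPD rigidity at $g_0$.

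\emph{From infinitesimal to local rigidity, and the main obstacle.}
Infinitesimal injectivity does not by itself give local injectivity of $g\mapsto\mathcal P_g$ in infinite dimensions; one also needs a stability estimate $\|h\|_{H^s}\lesssim\|I^{g_0}(\Pi h)\|+(\text{controlled error})$. This comes from a microlocal analysis of $\Pi$ and of the X-ray transform in the pseudodifferential framework for Anosov flows: the relevant normal operator is elliptic on the divergence-free slice modulo smoothing, and, combined with the Taylor expansion of $\mathcal P_g$ about $g_0$ and the $C^N$-smallness of $h$ (which fixes $N$ and $\epsilon$), this forces $h=0$. The conceptual and technical heart of the argument is the previous paragraph: correctly identifying the X-ray transform of the linearized unstable Jacobian and reducing its injectivity --- through the fiberwise Fourier analysis and the trace-free/conformal splitting --- to a pure statement about $\Delta_L$, together with the new geometric fact that $\Delta_L$ has no kernel on TT tensors on a closed hyperbolic $3$-manifold, a phenomenon with no previous analogue in negative curvature. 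A secondary difficulty is keeping the gauge slice compatible with this splitting and verifying that the conformal directions are genuinely detected by the MPD, so that the reduction loses no information.
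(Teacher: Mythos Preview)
Your high-level template (gauge-fix to a divergence-free slice, compute the linearization, prove solenoidal injectivity, upgrade via microlocal methods) matches the paper, and your use of Theorem~\ref{Lich-inj} as a black box is appropriate. But the two central steps --- computing the linearization and reducing to $\Delta_L$ --- are where your proposal departs from the paper, and in both places there is a genuine gap.

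First, the linearization. You write $\Pi h=\tfrac{n-1}{2}\pi_2^*h+\dot J^u_h$ with $\dot J^u_h$ depending on $h$ ``through first covariant derivatives''. This is not right: the paper (Proposition~\ref{prop:deriv}, using Flaminio's variation of the Riccati solution) shows that, \emph{after integration over a closed geodesic}, the first variation of $\tr U_g$ is $\tfrac12\,\partial_\lambda|_{\lambda=0}\mathrm{Ric}_\lambda$ minus $\tfrac{n-1}{2}\pi_2^*S$; the length-variation contribution $\tfrac{n-1}{2}\pi_2^*S$ then cancels, and one is left with exactly $\pi_2^*\mathcal R(S)$ where $\mathcal R=\tfrac12\,d_{g_0}\mathrm{Ric}=\tfrac14\Delta_L-\tfrac12 D_{g_0}D_{g_0}^*-\tfrac12\nabla d\circ\tr$. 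This is a \emph{second-order} differential operator on symmetric $2$-tensors, and the appearance of $\Delta_L$ comes from the well-known identity $2\,d_{g_0}\mathrm{Ric}=\Delta_L$ on TT tensors. Without this identification --- that the linearized functional is the geodesic X-ray transform of a bona fide $2$-tensor, namely $\mathcal R(S)$ --- the rest of your argument has no traction.

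Second, the reduction to $\Delta_L$. You propose to solve $Xu=\Pi h$, expand in vertical spherical harmonics, and use Pestov identities. This is the standard route for the X-ray transform on tensors, but it is not what is happening here, and it is unclear it would work: $\dot J^u_h$ is \emph{a priori} only H\"older and not a polynomial of low degree in $v$, so the harmonic expansion and Pestov machinery do not apply directly. The paper's route is much cleaner once the previous paragraph is in place: vanishing of the periods of $\pi_2^*\mathcal R(S)$ gives, by \eqref{eq:equivalence}, that $\mathcal R(S)\in\mathrm{Ran}(D_{g_0})$. The key algebraic fact (Lemma~\ref{lemm:DFG}, specific to constant curvature) is that $\mathcal R$ maps divergence-free tensors to divergence-free tensors; hence $\mathcal R(S)=0$ outright (Proposition~\ref{prop:DFG}). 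Splitting $S$ into its trace-free and pure-trace parts, the trace-free part then satisfies $\Delta_L S_2=0$ and Theorem~\ref{Lich-inj} kills it, while the conformal part satisfies $\Delta S_0=0$ with zero mean (coming from the volume normalization), so $S_0=0$. Your proposal should replace the Pestov/Fourier step by this commutation argument; as written, the mechanism by which $\Delta_L$ emerges is asserted rather than derived.

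Two smaller points. Your gauge normalization ``rescale $g$'' is vague; the paper normalizes to equal total volume, which is what makes the conformal direction land in $\{cg_0\}^\perp$ and lets $\Delta S_0=0$ force $S_0\equiv 0$. And for the upgrade from infinitesimal to local rigidity, the paper uses the approximate Liv\v sic theorem of Gou\"ezel--Lefeuvre together with a coercive estimate for the generalized X-ray normal operator $Q={\pi_2}_*\Pi\pi_2^*\mathcal R$ (Lemma~\ref{lemm:coer}), followed by interpolation in H\"older norms; your description is in the right spirit but you should name these ingredients, since this is where $N$ and $\epsilon$ are actually fixed.
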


\begin{remark}
    From our proof one can also obtain a \emph{stability estimate} on $\mathcal P_g$, similar to the stability estimates for the marked length spectrum in \cite{GL19, GuKnLef}.
\end{remark}
Motivated by the local rigidity statement of Theorem \ref{theoMain}, we propose the following conjecture.
\begin{conjecture}
    Let $M$ be a smooth closed manifold of dimension $3$ and let $g_1$ and $g_2$ be two smooth negatively curved metrics on $M$. Suppose that $\mathcal P_{g_1}=\mathcal P_{g_2}.$ Then there exists $\phi \in {\rm Diff}^0(M)$ together with a constant $c > 0$ so that 
$\phi^* g_1 = c g_2$. 
\end{conjecture}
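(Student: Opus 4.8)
\medskip\noindent\textbf{A strategy toward the conjecture.} Since the statement is global, the local linearization behind Theorem~\ref{theoMain} cannot be applied directly, and the plan is to combine a purely dynamical reduction with a deformation argument in the space of negatively curved metrics. For the reduction, start from the classical fact that for any two negatively curved metrics $g_1,g_2$ on the same closed $3$-manifold $M$ the geodesic flows $(\phi_1^t)$ on $S_{g_1}M$ and $(\phi_2^t)$ on $S_{g_2}M$ are Hölder orbit equivalent through a homeomorphism $\Phi$ matching free homotopy classes of closed orbits; this uses only the common action of $\pi_1(M)$ on its Gromov boundary $\partial_\infty\widetilde M\cong S^2$ (both flows are models of translation flow on $\partial^2\widetilde M\times\R$ modulo $\pi_1(M)$, the two metrics differing by the Busemann reparametrization). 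Write $J^u_{g_i}$ for the infinitesimal unstable Jacobian of $\phi_i$, so that $\log\mathcal P_{g_i}(c)=\int_{\gamma_{g_i}(c)}J^u_{g_i}$, and let $\alpha_\Phi$ be the reparametrization cocycle of $\Phi$. A change of variables in the periodic integrals gives $\log\mathcal P_{g_2}(c)=\int_{\gamma_{g_1}(c)}(J^u_{g_2}\circ\Phi)\,\alpha_\Phi$, so the hypothesis $\mathcal P_{g_1}=\mathcal P_{g_2}$ says exactly that the two Hölder functions $J^u_{g_1}$ and $(J^u_{g_2}\circ\Phi)\,\alpha_\Phi$ have equal integral over every closed orbit of $(\phi_1^t)$; by the Livšic theorem they are then Hölder-cohomologous with respect to $(\phi_1^t)$.

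The next step is to upgrade this cohomological identity via thermodynamic formalism. For a negatively curved metric the Liouville measure $\mu_{g_i}$ is the equilibrium state of the potential $-J^u_{g_i}$ for $(\phi_i^t)$. Under an orbit equivalence with speed-up $\alpha_\Phi$, the potential $-J^u_{g_2}$ on the $g_2$-side corresponds to the potential $-(J^u_{g_2}\circ\Phi)\,\alpha_\Phi$ on the $g_1$-side, with equilibrium states corresponding up to the Abramov reparametrization and normalization. Combined with the Livšic identity, this shows that $\Phi$ carries $\mu_{g_1}$ to $\mu_{g_2}$ modulo reparametrization; more generally it identifies the pressure functionals $t\mapsto P_{g_i}(-tJ^u_{g_i})$ and hence couples the Liouville entropies and the volumes of $S_{g_i}M$. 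In other words, the conjecture reduces to a \emph{Liouville-measure orbit rigidity} statement in dimension $3$: if the canonical orbit equivalence between the geodesic flows of two negatively curved metrics matches their Liouville measures up to reparametrization, then the metrics are homothetic. To attack this reduced problem I would run an open--closed argument on the moduli space of negatively curved metrics with a fixed MPD: openness should follow from a version of Theorem~\ref{theoMain} based at an arbitrary negatively curved metric rather than a hyperbolic one, replacing the injectivity of the Lichnerowicz Laplacian of $g_0$ on TT tensors by the (expected, generically available) injectivity of the normal operator of the $g$-geodesic X-ray transform on divergence-free symmetric $2$-tensors; closedness would require a compactness theorem for this moduli space.

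The main obstacle, as for the marked length spectrum conjecture in dimension $\geq 3$, is this compactness step, and here the difficulty is sharper than for lengths. Unlike the marked length spectrum, the MPD does not a priori control the lengths of closed geodesics: $\log\mathcal P_g(c)$ is an integral of the unstable Jacobian, which may be large along a short geodesic or small along a long one, so even a uniform lower bound on the systole, an upper bound on the diameter, or two-sided curvature bounds are not immediate and must be extracted from $\mathcal P_g$ alone -- for instance through the pressure relations above, by showing that $\mathcal P_g$ pins down the topological entropy or the Liouville volume. A second, related obstacle is that the reduced statement, rigidity of the Liouville measure under the canonical orbit equivalence, is genuinely weaker than marked length rigidity, so one cannot simply invoke a (still conjectural) dimension-$3$ marked length spectrum rigidity; one must either promote the measure-matching to a marked length identity $\ell_{g_1}=c\,\ell_{g_2}$ by analyzing the reparametrization cocycle $\alpha_\Phi$ directly, or devise a direct rigidity argument, such as a Besson--Courtois--Gallot type calibration adapted to the Liouville measure. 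I expect these a priori estimates -- rather than the dynamical reduction or the local deformation theory -- to be the point where substantially new ideas are required.
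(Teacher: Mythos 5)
This statement is an open \emph{conjecture} in the paper; the authors do not prove it, and indeed the entire point of the paper is that only the local version near a hyperbolic metric (Theorem \ref{theoMain}) is currently within reach. Your text is likewise not a proof but a research program, and you say so yourself: the compactness of the moduli space, the a priori geometric bounds extracted from $\mathcal{P}_g$ alone, and the promotion of the Liouville-measure matching to a marked length identity are all left as open problems. So there is a genuine gap --- in fact several --- and the proposal cannot be accepted as a proof of the conjecture.

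That said, let me separate what is sound from what is underestimated. The dynamical reduction is correct: the Morse/boundary orbit equivalence $\Phi$ matches free homotopy classes, the change of variables gives $\log\mathcal P_{g_2}(c)=\int_{\gamma_{g_1}(c)}(J^u_{g_2}\circ\Phi)\,\alpha_\Phi$, and the Liv\v sic theorem upgrades the equality of periodic integrals to H\"older cohomology of $J^u_{g_1}$ and $(J^u_{g_2}\circ\Phi)\,\alpha_\Phi$; this is exactly the starting point the paper uses in dimension $2$ (Section \ref{dim2}), where de la Llave's bootstrap is available. In dimension $3$ that bootstrap is missing, and your proposed substitute is weaker than you suggest at the ``openness'' step: the linearization of $\mathcal{P}$ at a metric $g$ is not the geodesic X-ray transform of the deformation tensor but the X-ray transform of $\mathcal{R}(S)$, a second-order operator applied to $S$ (Proposition \ref{prop:deriv}). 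Its solenoidal injectivity does not follow from positivity or from genericity of the normal operator of $I_2$; even at a hyperbolic metric it required the paper's mean-root-curvature argument precisely because the Lichnerowicz Laplacian on TT tensors is \emph{not} positive (Flaminio, Maubon). At a general negatively curved $g$ there is no known mechanism for this injectivity, so openness in your open--closed scheme is itself a hard open problem, on top of the closedness/compactness issue you correctly flag. The proposal is a reasonable map of the difficulties, but every load-bearing step beyond the Liv\v sic reduction remains unproven.
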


A key step in our proof is establishing the \emph{solenoidal injectivity} of the derivative $d_{g_0} \mathcal{P}$, that is, injectivity on the space of \emph{divergence-free} symmetric 2-tensors (also called \emph{solenoidal tensors}); these provide a transversal to the orbit of $g_0$ under ${\rm Diff^0}(M)$. 
We believe that a suitable uniform injectivity statement holds for $d_g \mathcal{P}$ for $g$ in a sufficiently small $C^N$ neighborhood of $g_0$.
The methods of this paper should then also give
the following stronger rigidity statement:  there exists a suitable $C^N$-neighborhood $U$ of $g_0$ so that for any $g_1, g_2 \in U$, we have  $\mathcal{P}_{g_1} = \mathcal{P}_{g_2}$ if and only if $g_1$ and $g_2$ are homothetic (and the homothety is homotopic to the identity).

In the case ${\rm dim}(M) > 3$, we can verify the solenoidal injectivity of $d_{g_0} \mathcal{P}$  for symmetric 2-tensors tangent to conformal deformations. This yields the following local rigidity result in the conformal class of a hyperbolic metric $g_0$.
\begin{theorem}
\label{theo-conf}
Let $(M,g_0)$ be a closed hyperbolic manifold { of dimension $n\geq 2$}. Then there is $N\in \mathbb N$ and $\epsilon>0$ such that for any {smooth negatively curved metric} $g$ conformally equivalent to $g_0$ with $\|g-g_0\|_{C^N}<\epsilon$, 
one has $\mathcal P_{g}=\mathcal P_{g_0}$ if and only if 
there exists 
%$\phi \in {\rm Diff}^0(M)$ together with 
a constant $c > 0$ so that 
$g = c g_0$. 
\end{theorem}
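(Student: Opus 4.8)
The plan is to reduce Theorem~\ref{theo-conf} to an infinitesimal rigidity statement for the derivative of $\mathcal P$ in conformal directions, and then to run the same linearization-and-stability scheme that derives Theorem~\ref{theoMain} from the injectivity of the Lichnerowicz Laplacian. Any metric conformal to $g_0$ and $C^N$-close to it has the form $g=e^{2f}g_0$ with $f$ small in $C^N$; the conformal class $[g_0]$ is a submanifold of the space of metrics on which $\mathcal P$ restricts, and on which the only trivial deformations of $g_0$ are the homotheties $g=cg_0$, since a closed hyperbolic manifold of dimension $n\ge 2$ carries no nontrivial conformal vector field. So I would first reduce the problem to the following \emph{infinitesimal} statement: if $f\in C^\infty(M)$ satisfies $d_{g_0}\mathcal P(2fg_0)=0$, then $f$ is constant. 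I expect the nonlinear passage from this (together with its uniform analogue for $d_g\mathcal P$ as $g$ ranges over a small $C^N$-neighborhood of $g_0$ inside $[g_0]$, which in the conformal setting should reduce to the uniform stability of the X-ray transform $I_0^g$ in negative curvature) to Theorem~\ref{theo-conf} to be the same deformation argument as in \cite{GL19,GuKnLef}: one writes $0=\mathcal P_g-\mathcal P_{g_0}=\int_0^1 d_{g_s}\mathcal P(2fg_s)\,ds$ along $g_s=e^{2sf}g_0$, uses a stability estimate to control $f$ minus its mean by a quadratic error, and bootstraps.

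The heart of the matter is the computation of $d_{g_0}\mathcal P$ on conformal tensors, and here is how I would carry it out. Along the closed geodesic $\gamma=\gamma_g(c)$ of period $T=\ell_g(c)$, the determinant of the linearized Poincar\'e map on the unstable bundle is the unstable volume expansion, $\log\mathcal P_g(c)=\int_0^T \tr(U_g(s))\,ds$, where $U_g$ is the unstable (periodic) solution of the matrix Riccati equation $U'+U^2+\mathcal R_g=0$ along $\gamma$, with $\mathcal R_g$ the Jacobi curvature operator $w\mapsto R^g(w,\dot\gamma)\dot\gamma$ on $\dot\gamma^{\perp}$ (for a closed orbit the Sasaki-metric correction between the two descriptions cancels by periodicity). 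For $g_0$ hyperbolic, $\mathcal R_{g_0}\equiv -\mathrm{Id}$ and $U_{g_0}\equiv \mathrm{Id}$, so $\log\mathcal P_{g_0}(c)=(n-1)\ell_{g_0}(c)$. Differentiating $\log\mathcal P_{g_t}(c)$ at $t=0$ along $g_t=e^{2tf}g_0$ produces three terms: the variation $\dot T=\int_\gamma f$ of the period, times the constant $\tr U_{g_0}=n-1$; the trace $u$ of the linearized Riccati solution, which satisfies $u'+2u+\tr\dot{\mathcal R}=0$ and hence, being periodic along $\gamma$, contributes $\int_\gamma u=-\tfrac12\int_\gamma\tr\dot{\mathcal R}$; and the variation $\tr\dot{\mathcal R}$ of the Jacobi operator along the moving geodesic. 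Because $g_0$ is locally symmetric, $\mathrm{Ric}_{g_0}=-(n-1)g_0$ is parallel, so $\tr\dot{\mathcal R}=\tfrac{d}{dt}\mathrm{Ric}_{g_t}(\gamma_t',\gamma_t')$ receives no contribution from the motion of the base point; what survives is the standard conformal variation $\dot{\mathrm{Ric}}=-(n-2)\,\mathrm{Hess}_{g_0}f-(\Delta_{g_0}f)\,g_0$ evaluated on $\dot\gamma$, plus a term $2(n-1)f$ coming from the $g_t$-unit-speed normalization of $\gamma_t'$. The Hessian piece $\mathrm{Hess}_{g_0}f(\dot\gamma,\dot\gamma)=(f\circ\gamma)''$ integrates to zero over the \emph{closed} geodesic, and I expect the two $\int_\gamma f$ contributions to cancel exactly, leaving
\begin{equation*}
d_{g_0}\log\mathcal P(2fg_0)(c)=\tfrac12\int_{\gamma_{g_0}(c)}\Delta_{g_0}f\,ds=\tfrac12\,I_0^{g_0}(\Delta_{g_0}f)(c),
\end{equation*}
where $I_0^{g_0}$ is the X-ray transform on functions and $\Delta_{g_0}=\tr_{g_0}\mathrm{Hess}_{g_0}$; since $\log\mathcal P_{g_0}>0$, the same holds for $d_{g_0}\mathcal P(2fg_0)$ up to the nonvanishing weight $\mathcal P_{g_0}(c)$.

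Granting this formula, the infinitesimal statement is immediate: the X-ray transform on functions is injective on a closed negatively curved manifold (a classical fact, provable via Liv\v{s}ic's theorem together with an energy estimate for the fiberwise Fourier modes of the primitive), so $d_{g_0}\mathcal P(2fg_0)=0$ forces $\Delta_{g_0}f=0$, whence $f$ is constant because $M$ is closed. In particular $\ker d_{g_0}\mathcal P$ restricted to conformal tensors is exactly the line $\mathbb R\,g_0$, the tangent space to the homothety orbit. Combining this with the mechanism of the first paragraph then yields Theorem~\ref{theo-conf}.

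The step I expect to be the main obstacle is the variation computation itself: one must keep careful track of the joint $t$-dependence of the closed geodesic $\gamma_t$, of its period $T_t$, and of the moving hyperplane $\dot\gamma_t^{\perp}$ on which $U_{g_t}$ lives, and verify that the apparently extra terms — the Hessian of $f$ and the two $\int_\gamma f$ contributions — really do disappear, the former only after integration over the closed orbit and the latter by exact cancellation. The local symmetry of $g_0$ is what makes the answer as clean as $\tfrac12\,I_0^{g_0}(\Delta_{g_0}f)$: it kills the $\nabla\mathrm{Ric}$ term and makes $U_{g_0}$ constant, so that the first variation of the unstable Jacobian depends on $f$ only through $\Delta_{g_0}f$. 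Without this, one would have to invert the geodesic ray transform on symmetric $2$-tensors, which is precisely the heavier analytic input (the Lichnerowicz injectivity) needed for the dimension-$3$ case of Theorem~\ref{theoMain}; by contrast, the reduction in the first paragraph and the X-ray injectivity used above are, in comparison, standard.
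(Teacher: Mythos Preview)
Your approach is correct and arrives at the same infinitesimal statement as the paper: the linearization of $\log\mathcal P_g$ at $g_0$ in the conformal direction $2fg_0$ is (a nonzero constant times) the X-ray transform of $\Delta_{g_0}f$, so its vanishing forces $f$ constant. The paper reaches this via its general formula $d_{g_0}\log\mathcal P(S)(c)=\int_{\gamma_0(c)}\pi_2^*\mathcal R(S)$ with $\mathcal R=\tfrac12\,d_{g_0}\mathrm{Ric}$ (Proposition~\ref{prop:deriv}), then notes that $\Delta_L(S_0g_0)=(\Delta S_0)g_0$; you compute the same thing by hand from the Riccati equation and the conformal variation of $\mathrm{Ric}$. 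Both arguments then feed the linearization into the approximate Liv\v{s}ic / generalized X-ray scheme of \cite{GL19,GuKnLef} for the nonlinear step.

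The one organizational difference is that you parametrize the conformal class by $f$ and bypass the slice lemma entirely (the only gauge in $[g_0]$ being homothety), reducing directly to the X-ray transform on \emph{functions}; the paper instead stays inside its solenoidal framework and invokes the coercive estimate of Lemma~\ref{lemm:coer} by observing that pure-trace tensors are $L^2$-orthogonal to the trace-free kernel of $\Pi_{\mathrm{Ker}(D^*)}\mathcal R|_{\mathcal V_{g_0}}$. Your route is more self-contained for the conformal problem; the paper's recycles the machinery already built for Theorem~\ref{theoMain}. A minor expository point: your enumeration of ``three terms'' is slightly muddled --- there are really two contributions (the period variation and the integrand variation via the linearized Riccati equation), the latter already absorbing the unit-speed renormalization that produces the $2(n-1)f$ --- but the cancellations you identify and the final formula are correct up to the sign convention on $\Delta_{g_0}$.
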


In Sections \ref{ss:mls}, \ref{dim2}, and \ref{subsec:lyap}, we provide further context and motivation for Theorems \ref{theoMain} and \ref{theo-conf}. Before that, we proceed to state our next main result concerning the spectrum of the \emph{Lichnerowicz Laplacian} in dimension 3. 

\subsection{Injectivity of the Lichnerowicz Laplacian}
\label{sec:Lich}
We reduce the injectivity of $d_{g_0} \mathcal{P}$ to the injectivity of the \emph{Lichnerowicz Laplacian} on trace-free divergence-free symmetric 2-tensors.
We note that such symmetric 2-tensors are often called \emph{TT tensors} for short, where ``TT" stands for ``traceless and transverse". (As mentioned above, the divergence-free condition implies transversality to the orbit of the diffeomorphism group of $M$.) 
\begin{theorem}\label{Lich-inj}
    Let $(M, g_0)$ be a closed hyperbolic 3-manifold. Let $\Delta_L$ denote its Lichnerowicz Laplacian. Then $\Delta_L$ is injective on TT tensors.
\end{theorem}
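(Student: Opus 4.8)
The plan is to diagonalize $\Delta_L$ by harmonic analysis on $G=\mathrm{PSL}(2,\mathbb C)=\mathrm{SO}(3,1)^{\circ}$; this is where dimension $3$ enters decisively, since it is only for this group that the unitary dual is simple enough to be run through explicitly. Write $M=\Gamma\backslash\mathbb H^{3}=\Gamma\backslash G/K$ with $K=\mathrm{SO}(3)$, and identify the bundle of trace-free symmetric $2$-tensors with the homogeneous bundle $\Gamma\backslash G\times_{K}V$, where $V\cong\mathbb R^{5}$ is the irreducible (``spin $2$'') representation of $K$ on trace-free symmetric endomorphisms of $\mathbb R^{3}$. By Frobenius reciprocity the space of $L^{2}$ sections decomposes as $\bigoplus_{\pi}m(\pi,\Gamma)\,\mathrm{Hom}_{K}(V,\mathcal H_{\pi})$ over the unitary dual of $G$, and the subspace of TT tensors is the part of each summand annihilated by the ($G$-equivariant) divergence $\delta$ into $1$-forms. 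It thus suffices to show that no unitary irreducible representation of $G$ contributes a nonzero element of $\ker(\Delta_{L})\cap\mathrm{TT}$.

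The first step is to express $\Delta_{L}$ through the Casimir. On $\mathbb H^{3}$ one has $\Delta_{L}=\nabla^{*}\nabla-6$ on trace-free symmetric $2$-tensors, the curvature terms of $\Delta_{L}$ acting as a scalar on the irreducible fiber $V$ because the curvature is constant; combined with the standard identity $\nabla^{*}\nabla=-\mathrm{Cas}_{G}+\mathrm{Cas}_{K}|_{V}$ for the canonical invariant connection on a homogeneous bundle, this yields $\Delta_{L}=-\mathrm{Cas}_{G}+c_{0}$ as an operator on sections, for an explicit constant $c_{0}$. Hence a TT tensor $h$ with $\Delta_{L}h=0$ is supported on those $G$-isotypic components whose Casimir eigenvalue equals $c_{0}$, and among these only the ones carrying a divergence-free spin-$2$ vector can actually contribute.

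The second step is the representation-theoretic case analysis. The unitary dual of $\mathrm{PSL}(2,\mathbb C)$ consists of the trivial representation, the spherical complementary series, and the unitary principal series $\pi_{\sigma,\nu}$; for each I would record whether $\pi|_{K}$ contains the spin-$2$ type, whether — reading off the $K$-type decomposition — the associated tensor is divergence-free (this occurs precisely when spin $2$ is the minimal $K$-type), and the Casimir eigenvalue as a function of $(\sigma,\nu)$. The trivial representation has no spin-$2$ type, and the complementary series together with the bulk of the principal series are eliminated because their Casimir eigenvalues miss $c_{0}$. What remains is to treat the finitely many ``borderline'' principal series whose Casimir eigenvalue equals $c_{0}$ exactly and to show that none of them carries a divergence-free spin-$2$ vector — equivalently, that $c_{0}$ is not attained along the one-parameter families of representations that can produce a TT tensor. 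Granting this, $\ker(\Delta_{L})\cap\mathrm{TT}=\{0\}$ on $M$ is immediate.

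I expect this last point to be the main obstacle. The Casimir alone only bounds $\Delta_{L}$ from below (on $\mathbb H^{3}$ itself the spectrum of $\Delta_{L}$ on TT tensors is not positive), so a soft positivity argument will not do: one genuinely needs the finer fact that the critical representations are absent from the TT part, using the explicit $K$-type structure of the principal series of $\mathrm{SO}(3,1)$, and possibly an auxiliary identity tying $\Delta_{L}$ to $\delta$ and to the dimension-$3$ curl operator on symmetric tensors. An alternative packaging of the same point identifies $\ker(\Delta_{L})\cap\mathrm{TT}$ with a suitable twisted cohomology group of $\Gamma$ and invokes a vanishing theorem; either way, ruling out the borderline representations is the heart of the matter, and the rest is bookkeeping.
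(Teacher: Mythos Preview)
Your harmonic-analysis framework is set up correctly, but the argument has a genuine gap at exactly the point you flag as ``the main obstacle,'' and that gap cannot be closed by the methods you propose. The continuous $L^2$-spectrum of $\Delta_L$ on TT tensors on $\mathbb{H}^3$ is precisely $[-3,+\infty)$ (this is the $n=3$ case of Delay's computation cited in the paper), which means the one-parameter family of unitary principal series with spin-$2$ minimal $K$-type realizes every value in $[-3,+\infty)$ as a $\Delta_L$-eigenvalue. In particular the ``borderline'' representation with $\Delta_L=0$ genuinely sits in the unitary dual of $\mathrm{PSL}(2,\mathbb C)$ and carries a divergence-free spin-$2$ vector; your proposed mechanism---that $c_0$ is simply missed by the relevant family, or that the spin-$2$ vector there fails to be TT---is false. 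Whether this specific tempered representation occurs in $L^2(\Gamma\backslash G)$ is an automorphic question depending on $\Gamma$, not a fact about the unitary dual, and there is no known vanishing theorem for the corresponding twisted cohomology. The paper's own remarks reinforce this: the values achieved as $\Gamma$ varies are dense in $[-3,+\infty)$, so no uniform spectral or Bochner-type bound can separate $0$ from the spectrum.

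The paper's proof takes a completely different, geometric route. If $S$ is TT with $\Delta_L S=0$, then by Flaminio's formula the second variation of the Liouville entropy $h_{\rm Liou}$ at $g_0$ in the direction $S$ vanishes. On the other hand the Osserman--Sarnak inequality $\kappa(g)\le h_{\rm Liou}(g)$, with equality at $g_0$, forces $\partial_\lambda^2|_{\lambda=0}\,\kappa\le 0$. The paper then computes this Hessian directly (using that in dimension $3$ the curvature tensor is determined by Ricci) and finds $\partial_\lambda^2|_{\lambda=0}\,\kappa=\|\pi_2^*S\|_{L^2}^2$, whence $S=0$. So the injectivity comes not from excluding a representation but from a variational inequality between two dynamical quantities that agree to first order at $g_0$.
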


\begin{remark}\label{rem:dim2}
    This holds for hyperbolic surfaces as well, since in this case $\Delta_L S = -2S$ for any  TT tensor $S$ (see Remark \ref{rem:Lich-dim2}). 
\end{remark}

For any $(M, g)$, the operator $\Delta_L$ introduced by Lichnerowicz in \cite{lich} is a second-order differential operator acting on tensors, which is a generalization of the Hodge-de Rham Laplacian acting on differential forms.
If $S$ is a TT tensor and $s \mapsto g_s$ is a family of metrics such that $\partial_s|_{s=0}\,g_s=S$, we have that
\begin{equation}
    \Delta_L S = 2 \partial_s|_{s=0} \,{\rm Ric}_{g_s},
\end{equation}
where ${\rm Ric}_{g_s}$ denotes the Ricci tensor of $g_s$, see \cite[Theorem 1.174 d)]{Bes}. 
As such, the spectrum of $\Delta_L$ arises prominently in the study of stability of Einstein manifolds, see for instance \cite[Chapter 12.H]{Bes}. The Lichnerowicz Laplacian appears in our study of $\mathcal{P}_g$ because the unstable Jacobian of the geodesic flow in negative curvature is given by the mean curvature of unstable horospheres (Lemma \ref{lemm:Poincare}), which in turn satisfies a Riccati equation involving the Ricci tensor (the trace of \eqref{eq:Riccati}).

In positive curvature, the injectivity of $\Delta_L$ follows from Bochner/Weitzenb\"ock formulas, which imply $\Delta_L>0$, see for instance \cite{Sch,MST,RST} and the references therein. 
For non-compact hyperbolic manifolds,  positivity of $\Delta_L$ also holds when $n > 9$, as shown by Delay \cite{Del}.

To the best of our knowledge, Theorem \ref{Lich-inj} is the first injectivity result on $\Delta_L$ for a {\em compact} negatively curved manifold. One possible reason for this is the fact that {\em for compact hyperbolic manifolds, the operator $\Delta_L$ acting on TT tensors is  not always positive}. 
   In particular, it was shown by Flaminio \cite[Theorem C]{Fla} for $n = 3$, and by Maubon \cite{Maubon} for $n \geq 3$, that there exist closed hyperbolic manifolds $(M^n, g_0)$ {of dimension $n$} for which $-n$ is in the spectrum of $\Delta_L$. (Their results are formulated in terms of the \emph{rough Laplacian}, which is related to $\Delta_L$ via a Weitzenb\"ock formula; see \eqref{eq:LaplFla}.)
Moreover, \cite[Proof of Theorem C]{Fla} shows that the values $\lambda \in \R$ for which there exists a closed hyperbolic $3$-manifold with $\lambda$ in the spectrum of $\Delta_L$ on TT tensors are dense in $[-3,+\infty)$.
   Thus, the injectivity statement cannot be proven from Bochner identities, and a new (geometric in our case) interpretation of elements in $\mathrm{Ker}(\Delta_L)$ is required.

\subsubsection*{Mean root curvature}
Our proof of Theorem \ref{Lich-inj} involves studying the \emph{mean root curvature} $\kappa$ of $(M, g_0)$ (see Definition \ref{def:MRC}), a geometric invariant introduced by Osserman--Sarnak \cite{OS84}, which provides a lower bound for the \emph{Liouville entropy} $h_{\rm Liou}$---the measure-theoretic entropy of the geodesic flow $(\phi^t_g)_{t \in \R}$ with respect to the Liouville measure on $SM$ (the normalized measure induced by $g$ which is locally given by the product of the Riemannian volume on $M$ and the spherical Lebesgue measure on the fibers). 

We note that the aforementioned results of Flaminio and Maubon about the existence of negative eigenvalues of $\Delta_L$ are better known by their implications for the Liouville entropy; namely, there exist hyperbolic manifolds $(M, g_0)$ and one-parameter families $ (g_s)_{s\in(-\eps, \eps)}$ with constant total volume,  such that $\partial_s^2|_{s = 0} h_{\rm Liou} (g_s) > 0$. 
On the other hand, Katok proved that $h_{\rm Liou}(g) < h_{\rm Liou}(g_0)$ for any negatively curved metric that is conformally equivalent, but not isometric, to a locally symmetric metric $g_0$ (and has same total volume) \cite{Ka}. 
In other words, while $h_{\rm Liou}(g)$ always has a critical point at a hyperbolic metric, this critical point can be a \emph{saddle point} in higher dimensions. 
As a consequence of our proof of Theorem \ref{Lich-inj}, we obtain that  
whenever $(M, g_0)$ is a hyperbolic $3$-manifold for which $h_{\rm Liou}$ has a saddle point, the mean root curvature $\kappa$ has a saddle point as well. 
\begin{theorem}\label{kappa-saddle}
    There exist hyperbolic 3-manifolds $(M,g_0)$ for which the functional $\kappa(g)$, restricted to negatively curved metrics of the same total volume as $g_0$, has neither a local maximum nor a local minimum at $g_0$. 
\end{theorem}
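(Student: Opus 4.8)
The plan is to exploit the relation between the mean root curvature $\kappa$, the Liouville entropy $h_{\rm Liou}$, and the spectrum of the Lichnerowicz Laplacian, all of which are controlled near $g_0$ by the same quadratic form on TT tensors. First I would recall the Osserman--Sarnak inequality $h_{\rm Liou}(g) \geq \int_{SM} \kappa \, d\mu_{\rm Liou}$ (normalized appropriately), together with the fact that this becomes an equality to \emph{first order} at a locally symmetric metric; more precisely, along a volume-preserving variation $(g_s)$ with $\partial_s|_{s=0} g_s = S$ a TT tensor, both $h_{\rm Liou}(g_s)$ and the averaged mean root curvature $\bar\kappa(g_s) := \int_{SM}\kappa\,d\mu$ have a critical point at $s=0$, and their second variations are each given by an explicit quadratic expression in $S$ involving $\langle \Delta_L S, S\rangle$ (for $h_{\rm Liou}$ this is essentially the Flaminio--Maubon computation; for $\bar\kappa$ one differentiates the Riccati/Jacobi expression for the root curvature twice). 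The key point is that these two second-variation quadratic forms \emph{agree} on the directions that matter: the deformations produced by Flaminio and Maubon for which $\partial_s^2|_{s=0} h_{\rm Liou}(g_s) > 0$ are exactly TT tensors lying in the $(-n)$-eigenspace of $\Delta_L$, and on such tensors the second variation of $\bar\kappa$ is (up to a positive constant) the same expression.

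The steps, in order. (1) Fix a hyperbolic $3$-manifold $(M,g_0)$ admitting a TT tensor $S$ with $\Delta_L S = -3 S$ (exists by Flaminio \cite[Theorem C]{Fla}), and a volume-preserving family $(g_s)$ tangent to $S$. (2) Recompute $\partial_s^2|_{s=0}\bar\kappa(g_s)$ directly: write $\kappa$ in terms of the solution of the Riccati equation whose trace involves ${\rm Ric}_{g_s}$, use $\partial_s|_{s=0}{\rm Ric}_{g_s} = \tfrac12 \Delta_L S$, and carry out the second-order expansion of the square-root/averaging, checking that the linear term integrates to zero over $SM$ (this uses the divergence-free and trace-free conditions and integration by parts along the flow) and that the quadratic term is a positive multiple of $\langle \Delta_L S, S\rangle_{L^2} + (\text{positive curvature terms})$, hence strictly positive for this $S$. (3) Conclude $\bar\kappa(g_s) > \bar\kappa(g_0)$ for small $s\neq 0$, so $g_0$ is not a local maximum of $\kappa$ among fixed-volume metrics. (4) For the other half, invoke Katok \cite{Ka}: take a conformal, non-isometric, volume-preserving perturbation $g_t = e^{2f_t}g_0$; then $h_{\rm Liou}(g_t) < h_{\rm Liou}(g_0)$, and since $\bar\kappa \leq h_{\rm Liou}$ always while $\bar\kappa(g_0) = h_{\rm Liou}(g_0)$, we get $\bar\kappa(g_t) < \bar\kappa(g_0)$, so $g_0$ is not a local minimum either. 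Combining (3) and (4) gives the saddle.

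I expect the main obstacle to be Step (2): one must show that the second variation of the \emph{averaged} mean root curvature, which involves the nonlinear operation $A \mapsto \det(A)^{1/(n-1)}$ (or $\mathrm{tr}(\sqrt{\cdot})$ depending on the normalization) applied to the stable/unstable second fundamental form, reduces on TT deformations to the same Lichnerowicz quadratic form that governs $h_{\rm Liou}$, rather than something with an extra definite correction of the wrong sign. Concretely, the square root introduces a term of the form $-\tfrac14 \langle (\partial_s U)^2 U^{-1}\cdot\rangle$ and one needs the first-order variation $\partial_s U$ of the unstable shape operator to be controllable in $L^2$ via the transport equation it satisfies; handling this requires the microlocal/normal-operator machinery already developed in the paper for $d_{g_0}\mathcal{P}$. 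A secondary subtlety is bookkeeping the $SM$-integration-by-parts identities to kill all first-order terms and to identify the curvature contributions; here the trace-free and divergence-free hypotheses on $S$ are essential, and one should double-check that the constant-volume constraint is automatically satisfied (it is, since $\tr_{g_0} S = 0$). Once Step (2) is in place, Steps (3) and (4) are essentially immediate from the stated results of Flaminio--Maubon and Katok.
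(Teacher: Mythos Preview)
Your overall architecture---a Flaminio-type TT direction to rule out a local maximum, and a Katok conformal perturbation plus Osserman--Sarnak to rule out a local minimum---is exactly the paper's. Step (4) is correct verbatim. The gap is in Step (2).

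First, the inequality $\kappa \le h_{\rm Liou}$ with equality at $g_0$ only yields $\partial_s^2|_{s=0}\kappa \le \partial_s^2|_{s=0} h_{\rm Liou}$, so Flaminio's positivity of the Liouville-entropy Hessian gives you nothing about $\kappa$ in the direction you need; your assertion that the two Hessians ``agree on the directions that matter'' is unsupported, and in fact false in general (the paper shows that for $\Delta_L S=0$ one has $\partial^2 h_{\rm Liou}=0$ while $\partial^2\kappa=\|\pi_2^*S\|^2>0$). Second, your proposed computation conflates $\kappa$ with $h_{\rm Liou}$: the mean root curvature is $\int_{SM}\tr\big((-R_g)^{1/2}\big)\,dm_g$, built from the \emph{curvature endomorphism}, not from the Riccati solution $U_g$; these coincide only at $g_0$. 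No microlocal or transport-equation machinery is needed, and your expression ``a positive multiple of $\langle\Delta_L S,S\rangle+(\text{positive terms})$'' cannot yield positivity when $\langle\Delta_L S,S\rangle=-3\|S\|^2<0$ without an explicit estimate on those terms.

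What the paper actually does in Step (2) is a direct second variation of $\tr\big((-R_\lambda)^{1/2}\big)$ at $R_0=-\mathrm{Id}$ (Proposition~\ref{prop:hess}), producing a term $-\tfrac14\int\tr\big((\partial_\lambda R_\lambda)^2\big)$ together with contributions from the total scalar curvature and $\|\pi_2^*S\|^2$. The crucial, dimension-3-specific step is Lemma~\ref{lemma}: since in dimension $3$ the full curvature tensor is determined by $\mathrm{Ric}$, the hypothesis $\partial_\lambda|_{\lambda=0}\mathrm{Ric}_\lambda=\mu S$ pins down $\partial_\lambda|_{\lambda=0}R_\lambda$ explicitly, and one obtains the closed formula
\[
\partial^2_\lambda|_{\lambda=0}\kappa(\lambda)=(\mu+2)\Big(\tfrac{2-\mu}{4}\|\pi_2^*S\|^2-\tfrac{\mu}{12\,\mathrm{Vol}(M)}\|S\|^2\Big),
\]
which is visibly positive for $\mu\in[-\tfrac32,0)$ (equivalently $\Delta_L S=2\mu S$ with eigenvalue in $[-3,0)$). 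That algebraic identity, not any comparison with $h_{\rm Liou}$, is the missing ingredient.
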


\begin{remark}
The fact that $h_{\rm Liou}$ and $\kappa$ have saddle points at $g_0$ hyperbolic is related to the fact that the total scalar curvature has a saddle point.
See also \cite{Kn} for a formula relating the Hessians of the Liouville entropy and the total scalar curvature.
\end{remark}

\subsection{Marked length spectrum rigidity}\label{ss:mls}
Theorems \ref{theoMain} and \ref{theo-conf} can be viewed as a dynamically flavored variant of a marked length spectrum rigidity result, more specifically, the local rigidity result of Guillarmou--Lefeuvre \cite{GL19}.
The \emph{marked length spectrum} $\mathcal{L}_g$ of $(M, g)$ is the function which associates to each free homotopy class the length of its unique geodesic representative:
\begin{equation*}\label{eq:MLS}
\mathcal L_g:  \mathcal C \to \mathbb R, \quad
c \mapsto \ell _{g}(\gamma_g (c)).
\end{equation*}

\begin{remark}\label{rem:MLS}
    If $(M, g_0)$ is a real hyperbolic (normalized to have constant sectional curvature $-1$) manifold of dimension $n\geq 2$, then $\log \mathcal{P}_{g_0} = (n-1) \mathcal{L}_{g_0}$ (see Lemma \ref{lemm:Poincare}). 
   For other locally symmetric metrics, $\log \mathcal{P}_{g_0}$ and $\mathcal{L}_{g_0}$ also agree up to a multiplicative constant, (see \cite[p. 347]{Ka} for the values of the constants),  
    but outside of the locally symmetric cases, neither of these two functionals determines the other.
\end{remark}

It is conjectured that whenever $g$ and $g_0$ are negatively curved (and more generally Anosov) with $\mathcal{L}_g = \mathcal{L}_{g_0}$, then $g$ and $g_0$ are isometric (more specifically, there exists $\phi \in {\rm Diff}^0(M)$ such that $\phi^* g = g_0$) \cite[Conjecture 3.1]{burnskatok}. 
In other words, the mapping $g \mapsto \mathcal{L}_g$ is (globally) injective on the space of isometry classes of negatively curved metrics on $M$.

Prior to the formulation of this global marked length spectrum rigidity conjecture, Guillemin--Kazhdan considered a related problem, motivated by considerations on the spectrum of the Laplacian. They proved the following 
\emph{deformation rigidity} result: if  $(M, g_0)$ is a closed negatively curved surface and there is a smooth one-parameter family $(g_s)_{s \in (- \eps, \eps)}$ with $\mathcal{L}_{g_s} = \mathcal{L}_{g_0}$ for all $s$, then there is a smooth family $\phi_s \in {\rm Diff}^0(M)$ with $\phi_s^* g_s = g_0$ \cite{GK80}.
We emphasize that Guillemin and Kazhdan's deformation rigidity, more specifically, the smoothness of the family $s \mapsto \phi_s$, is not implied by global rigidity. Their proof additionally establishes the injectivity of the \emph{linearization} $d_g \mathcal{L}$, also known as the \emph{geodesic X-ray transform}, on the space of (divergence-free) symmetric 2-tensors.

In dimension 2, global marked length spectrum rigidity was resolved by Otal and Croke independently \cite{otal, croke} for the negatively curved case (see also \cite{GLP} for the more recent extension to the Anosov case).  
In higher dimensions, the conjecture was shown to hold if $g$ is conformally equivalent to $g_0$ by Katok \cite{ka-conf}, and if $g_0$ is locally symmetric by Hamenst{\"a}dt, 
leveraging the minimal entropy rigidity theorem of Besson--Courtois--Gallot in dimension at least 3 \cite{BCG}. 
More recently,
Guillarmou--Lefeuvre \cite{GL19} (see also subsequent work of Guillarmou--Knieper--Lefeuvre \cite{GuKnLef}) proved the following  \emph{local rigidity} result:
for any negatively curved $(M^n, g_0)$, there exists $N = N(n) \in \mathbb{N}$ and $\eps = \eps(g_0) > 0$ such that for any negatively curved $g$ with $\Vert g - g_0 \Vert_{C^N} < \eps$, the equality $\mathcal{L}_{g} = \mathcal{L}_{g_0}$ implies $g$ and $g_0$ are isometric. 

Theorems \ref{theoMain} and \ref{theo-conf} are partial analogues of these higher dimensional marked length spectrum rigidity results, replacing $\mathcal{L}_g$ with $\mathcal{P}_g$. 
(Note that in dimension 2, injectivity of $g \mapsto \mathcal{P}_g$ follows from known results, as we will elaborate on in Section \ref{dim2} below.)

\subsubsection*{Weighted marked length spectrum rigidity}

The question of injectivity of $g \mapsto \mathcal{P}_g$ also fits into the framework of  ``weighted" marked length spectrum rigidity proposed by Khalil and Lafont \cite[\S 1.5]{GRH} and studied by Gogolev and Rodriguez Hertz \cite{GRH}. 
The setup is to consider weight functions $f_1 : S^{g_1} M \to \R$ and $f_2 : S^{g_2} M \to \R$ and to suppose that they match on all periodic orbits, i.e.,
\[
\int_{\gamma_{g_1}(c)} f_1 \, d \ell_{\gamma_{g_1}(c)}= \int_{\gamma_{g_2}(c)} f_2 d \ell_{\gamma_{g_2}(c)}
\]
for all $c \in \mathcal{C}$.
The question is whether this implies $g_1$ and $g_2$ are homothetic. 

Usual marked length spectrum rigidity corresponds to both weight functions $f_1$ and $f_2$ identically equal to 1. The Poincar{\'e} rigidity question we are considering in this paper corresponds to taking $f_1$ and $f_2$ to be the unstable Jacobians of $g_1$ and $g_2$, respectively.
Indeed,
we have 
\begin{equation}\label{eq:logP-intro}
\log \mathcal{P}_g (c) = \int_{\gamma_g(c)} J^u(v) \, d \ell_{\gamma_g(c)},
\end{equation}
where $J^u(v) : = \frac{d}{dt}|_{t =0} \log (\det (D_v \phi^t |_{E^u})) = \frac{d}{dt}|_{t=0} \det (D_v \phi^t|_{E^u})$ denotes the 
\emph{unstable Jacobian}.
Note that in \cite{GRH}, the Khalil--Lafont conjecture was established for $M$ of dimension $n = 2$ and weight functions of regularity $C^r$ for $r > 2$, whereas our weight function $J^u(v)$ is only $C^{1 + \alpha}$ and $n \geq 3$. 

\subsection{Poincar{\'e} rigidity in dimension 2}\label{dim2}
When $M$ has dimension $n = 2$, 
global injectivity of $g \mapsto \mathcal{P}_g$ on Anosov metrics follows by combining work of de la Llave \cite{dL} and Gogolev--Rodriguez Hertz \cite{GRH-abelian} on smooth rigidity of Anosov flows in dimension 3 
with work of Guillarmou--Lefeuvre--Paternain \cite{GLP} on marked length spectrum rigidity. (Note that when $g$ has constant negative curvature, $\mathcal{L}_g = \mathcal{P}_g$ by Remark \ref{rem:MLS}.)
By \eqref{eq:logP-intro}, the hypothesis $\mathcal{P}_{g_1} = \mathcal{P}_{g_2}$, together with the Liv\v sic theorem, implies that the unstable Jacobians of $\phi^t_{g_1}$ and $\phi^t_{g_2}$ are cohomologous. 
De la Llave proved that if this is the case for two $C^0$-conjugate Anosov flows in dimension 3, then the conjugacy is in fact $C^{\infty}$.

While two geodesic flows are not conjugate unless $\mathcal{L}_{g_1} = \mathcal{L}_{g_2}$, they are always orbit equivalent via, e.g., the Morse correspondence (see for instance \cite{gromov}).  
As was explained to us by Andrey Gogolev, de la Llave's argument can be adapted from conjugacies to orbit equivalences; as such $\mathcal{P}_{g_1} = \mathcal{P}_{g_2}$ implies the Morse correspondence is $C^{\infty}$. 
For negatively curved metrics, this implies $g_1$ and $g_2$ are homothetic \cite[Corollary 4.6]{ham99}.
For Anosov metrics, \cite[Theorem 7.1, part 2]{GRH-abelian} gives that the flows $\phi^t_{g_1}$ and $\phi^t_{g_2}$ are $C^{\infty}$ conjugate up to a constant rescaling. It now follows from marked length spectrum rigidity \cite{GLP} that $g_1$ and $g_2$ are homothetic.

\subsection{Rigidity of entropies and of Lyapunov exponents}\label{subsec:lyap}

\subsubsection*{Lyapunov rigidity}
Our main results are closely related to work of Butler on characterizing symmetric spaces by their Lyapunov spectra \cite{butler1, butler2}. 
Note that if $v$ is a periodic point of $\phi^t$ with period $T$ and $V \in T_v(SM)$ is an eigenvector of $D_v \phi^T$ with eigenvalue $\sigma$, then one checks that the \emph{Lyapunov exponent} $\lambda(v, V)$ is given by 
\begin{equation}\label{eq:lyap}
    \lambda(v, V) := \limsup_{t \to \infty} \frac{\log \Vert D_v \phi^t(V) \Vert}{t} = \frac{\log |\sigma|}{T}.
\end{equation}
As such, the functional $\Lambda_g := (\log \mathcal{P}_g)/\mathcal{L}_g$ associates to each free homotopy class $c$ the sum of the positive Lyapunov exponents of the closed geodesic $\gamma_g(c)$.

If $(M^n, g)$ has constant negative curvature, then at every periodic orbit, the $n-1$ positive Lyapunov exponents are all equal. 
Butler proved that, in dimension at least 3, this property characterizes metrics of constant sectional curvature among all negatively curved metrics 
\cite{butler1}.
More generally, if the Lyapunov exponents of $g$ on periodic orbits follow the same ``projective pattern" as those of a negatively curved locally symmetric space $g_0$, then $g$ and $g_0$ are homothetic \cite{butler2}.
This can be viewed as another analogue of Hamenst{\"a}dt's aforementioned result characterizing such metrics by the lengths of their closed geodesics. (As in \cite{ham99}, the $n \geq 3$ hypothesis is used to apply \cite{BCG}.)

\subsubsection*{Entropy rigidity}%\label{sss:entropy-rig}
The techniques in the present paper are also closely related to the techniques in the work of Flaminio \cite{Fla} and the third-named author \cite{Hum} on \emph{Katok's entropy conjecture} \cite{Ka}.
This conjecture, which remains a major open problem, states that if $(M, g)$ is negatively curved and its topological entropy $h_{\rm top}(g)$ coincides with its Liouville entropy $h_{\rm Liou}(g)$%, the measure-theoretic entropy of $(\phi^t_g)_{t\in \mathbb R}$ with respect to the Liouville measure $m_g$  on $SM$
, then $g$ is locally symmetric. Katok established this in dimension 2 \cite{Ka}, and there are
partial results in higher dimensions due to Flaminio \cite{Fla} and the third-named author \cite{Hum}.

The reason their work is related to ours is that the functional  $\mathcal{P}_g$ is the integral of $J^u$ around periodic orbits \eqref{eq:logP-intro}, while Pesin's formula expresses the Liouville entropy as the integral of the same function over $SM$:
\begin{equation}
\label{eq:hLiou}
    h_{\rm Liou}(g) = \int_{SM} J^u(v) dm_g,
\end{equation} 
where $m_g$ is the Liouville measure.

Thus, the coincidence of the functionals $\Lambda_g$ implies  coincidence of the Liouville entropies. On the other hand, we note that this is not implied by coincidence of the $\mathcal{P}_g$ functionals.

\subsubsection*{Local injectivity of $\Lambda_g$}%\label{sec:Lambda}
We note that it follows from Katok's entropy conjecture that $\Lambda_g$ characterizes locally symmetric metrics. 
Indeed, if $\Lambda_g=\Lambda_{g_0}$ for some locally symmetric metric $g_0$, then %$\mathcal C\ni c \mapsto \Lambda_g(c)$ 
$\Lambda_g$ is a constant, and, hence, $J^u_g(v)$ is cohomologous to a constant, which in turn implies $h_{\rm top}(g) = h_{\rm Liou}(g)$. 
We deduce that $\Lambda_g$ determines any real or complex hyperbolic metric in a small enough neighborhood \cite{Fla, Hum}, and that it characterizes any locally symmetric metric  (globally) in its conformal class \cite{Ka}. 

It also follows from the methods of the present paper and \cite{Fla,Hum} that $d_{g_0}\Lambda$  is injective on TT tensors for $g_0$ being a real or complex hyperbolic metric. 
%This would further provide a \emph{stability estimate} on $\Lambda_g$ near $g_0$, i.e., one could bound $\|g-g_0\|$ (in some Sobolev norm) by the $\ell^\infty(\mathcal C)$-norm of $\Lambda_g/\Lambda_{g_0}$.

As emphasized above, the local rigidity of $\mathcal{P}_g$ is much more difficult to obtain than that of $\Lambda_g$ because the differential operator $\mathcal R$ appearing in the derivative $d_{g_0} \mathcal{P}$ is \emph{not} positive, whereas for $\Lambda_g$, the differential operator $\mathcal T$ appearing in $d_{g_0} \Lambda$ is computed in \cite[Proposition 5.1.1]{Fla} and is positive on TT tensors; see \cite{Fla,Hum}.

\subsection{Strategy of the proof}\label{sub:strat}

\subsubsection*{Microlocal techniques}
At a high level, the scheme of the proof of our main theorem is similar to Guillarmou and the fourth-named author's proof of local marked length spectrum rigidity in \cite{GL19}. 
We Taylor expand $\mathcal{P}_g$ (more precisely, a closely related functional we will denote by $\Phi_g$) about $g = g_0$ using properties of the \emph{generalized X-Ray transform} introduced by Guillarmou in \cite{Gu}.
In order to apply this machinery, we must establish the {solenoidal injectivity} of the derivative $d_{g_0} \Phi$. 

We remark that if $s \mapsto g_s$ is a family of metrics such that $\mathcal{P}_{g_s} \equiv \mathcal{P}_{g_0}$ for all $s$, then $d_{g_0} \Phi \left( \partial_s|_{s=0} g_s \right) = 0$, so solenoidal injectivity of $d_{g_0}\Phi$ is closely related to deformation rigidity.
In simple terms, the generalized X-ray transform is the key to upgrading deformation rigidity to local rigidity. A similar scheme was used by the third-named author in the context of entropy rigidity \cite{Hum}.

\subsubsection*{Injectivity}
The majority of the paper is devoted to establishing the solenoidal injectivity of $d_{g_0} \Phi$ for a hyperbolic 3-manifold $(M^3, g_0)$.
While the derivative of $\mathcal{L}_g$ is easily computed at any metric $g$ using the fact that geodesics minimize length in their free homotopy class, the functional $\mathcal{P}_g$ is more difficult to understand due to the presence of the unstable Jacobian. A standard computation shows that the unstable Jacobian is given by ${\rm tr}(U_g)$, where $U_g$ is the second fundamental form of horospheres (Lemma \ref{lemm:Poincare}). 
Using that $U_g$ is a solution of the Riccati equation, we can use work of Flaminio \cite{Fla} to simplify the derivative of $U_g$ at a hyperbolic metric. 
The injectivity statement on $d\Phi_{g_0}$ 
reduces to the {solenoidal injectivity} (on tensors with zero mean) of an explicit differential operator $\mathcal R$ on the space of symmetric 2-tensors (defined in Proposition \ref{prop:deriv}), which is a constant multiple of the {Lichnerowicz Laplacian} $\Delta_L$ (see \eqref{eq:lich} below) on TT tensors.  
Using the fact that $\Delta_L$ preserves TT tensors (see Lemma \ref{lemm:DFG}), 
we reduce the solenoidal injectivity of $\mathcal{R}$ to the injectivity of $\Delta_L$ on TT tensors, which is the statement of Theorem \ref{Lich-inj} above.
 
We recall that, to establish Theorem \ref{Lich-inj}, we study a geometric invariant of negatively curved manifolds $(M, g)$ known as the \emph{mean root curvature $\kappa(g)$}, which satisfies $\kappa(g) \leq h_{\rm Liou}(g)$ \cite{OS84}.
 We first notice from \cite[Proposition 5.1.1]{Fla} that elements in the kernel of $\Delta_L$ define infinitesimal directions $S$ for which the Hessian of the Liouville entropy vanishes: $d^2_{g_0}(h_{\mathrm{Liou}})(S,S)=0$. Since $\kappa(g_0)=h_{\rm Liou}(g_0)$ and since $g_0$ is a critical point of both $\kappa$ and $h_{\rm Liou}$, we deduce that $d^2_{g_0}\kappa(S,S) \leq d^2_{g_0} (h_{\rm Liou})(S, S) = 0$ by Taylor expanding near $g_0.$ In Proposition \ref{prop:hess}, we compute the Hessian of $\kappa$ in any trace-free direction. In dimension $n=3$, using the fact that the curvature tensor is completely determined by the Ricci tensor (Lemma \ref{lemma}), we show that $d^2_{g_0}\kappa(S,S)>0$ if $S\neq 0$, which concludes the proof of the injectivity.

Our computation of the Hessian of $\kappa$ also allows us to deduce that whenever $(M, g_0)$ is a hyperbolic $3$-manifold for which $h_{\rm Liou}$ has a saddle point, then so does $\kappa$ (Theorem \ref{kappa-saddle}).

\subsection*{Organization of the paper}
In Section \ref{sec:prelim}, we recall some properties of symmetric 2-tensors, i.e., tangents to the deformations $s \mapsto g_s$.
In Section \ref{sec:deriv}, we compute the linearization of $\Phi_g$. 
In Section \ref{sec:local}, we use the generalized X-Ray transform to prove the main theorem, assuming the injectivity of $d_{g_0} \Phi$ (Theorem \ref{theomicrolocal}). We note this argument works for any $(M, g_0)$ for which $d_{g_0} \Phi$ is injective, and does not use that $n = 3$. 
In Section \ref{sec:inj}, we reduce the desired injectivity to Theorem \ref{Lich-inj}. We then establish Theorems \ref{Lich-inj} and \ref{kappa-saddle}. We also show Theorem \ref{theo-conf} in Section  \ref{sec:inj}.

\subsection*{Acknowledgements} We thank Andrey Gogolev for explaining the two-dimensional case (Section \ref{dim2}) to us and Livio Flaminio, Colin Guillarmou, Andrei Moroianu, Uwe Semmelmann and Paul Schwahn for some discussions related to the injectivity of the Lichnerowicz Laplacian. We further thank Paul Schwahn for noticing a sign mistake in Remark \ref{rem:dim2} in a previous version of the paper.

Part of this paper was written while K.B., A.E., and T.H. attended an American Institute
of Mathematics (AIM) SQuaRE workshop in September 2025. We would like to thank AIM
for this opportunity and for their hospitality during our stay.
K.B. was supported by NSF grant DMS-2402173. A.E. was supported by NSF grant DMS-2247230. T.H. and T.L. were supported by the European Research Council (ERC) under the European Union’s Horizon 2020 research and innovation programme (Grant agreement no.\
101162990 — ADG). A.W. was supported by NSF grants DMS--2154796, and DMS--1402852.

\section{Preliminaries}\label{sec:prelim}

Let $(M,g)$ be a closed negatively curved manifold. 
Let $d\mathrm{vol}_{g}$ denote the volume form associated to $g$ and $\mathrm{Vol}_{g}(M)=\int_M d\mathrm{vol}_g$ its total volume.
Let $S^{g}M:=\{(x,v)\in TM\,|\, \|v\|_{g}=1\}$ be its unit tangent bundle. We denote by $(\phi_g^t)_{t\in \mathbb R}$ the geodesic flow generated by $g$ on $S^gM.$

\subsection{Symmetric tensors}
Let $C^{\infty}(M;S^mT^*M)$ be the smooth sections of the bundle of symmetric $m$-tensors on $M$. Note that the scalar product $g$ on $TM$ extends naturally to a scalar product on $C^{\infty}(M;S^mT^*M)$, which we will denote by $\langle \cdot, \cdot\rangle_{L^2(M;S^mT^*M)}$ (or $\langle \cdot,\cdot\rangle$ when there is no risk of confusion). The \emph{trace} is given by
\begin{equation}
\label{eq:trace}
\tr_{g}: C^{\infty}(M;S^{m+2}T^*M)\to C^{\infty}(M;S^{m}T^*M),\quad S\mapsto \sum_{i=1}^nS_x(e_i,e_i,\ldots), 
\end{equation}
 where $(e_i)_{i=1}^n$ is a $g$-orthonormal basis of $T_xM$. Note that for $m=2$, the space of symmetric $2$-tensors splits as
 %The $L^2$-adjoint of $\tr_{g}$ is
 %\begin{equation}
%\label{eq:adjointt}
%L: C^{\infty}(M;S^{m}T^*M)\to C^{\infty}(M;S^{m+2}T^*M),\ L(h):=\mathrm{Sym}(h\otimes g),
%\end{equation}
%where $\mathrm{Sym}$ is the symmetrization of a $m$-tensor. The operator $L$ is easily seen to be injective, this gives, for $m=2$,
\begin{equation}
\label{eq:decomp}C^{\infty}(M;S^2T^*M)=C^{\infty}(M;S^2_0T^*M)\oplus C^{\infty}(M)g, 
\end{equation}
where 
$
C^{\infty}(M;S^2_0T^*M):=\{S\in C^{\infty}(M;S^2T^*M)\mid \tr_{g}(S)=0\}
$
denotes the bundle of trace-free tensors. 
We will frequently identify symmetric tensors and functions on the unit tangent bundle $SM$ as follows. Let
\begin{equation}
\label{eq:pim} \pi_m^*:C^{\infty}(M;S^mT^*M)\to C^{\infty}(SM), \quad \pi_m^*S(x,v)=S_x(\underbrace{v,\ldots,v}_{m \text{ times.}}).
\end{equation}

The Levi-Civita connection $\nabla_g$ acts naturally on $m$-tensors, but it does not preserve the symmetry. We thus introduce the \emph{symmetrized covariant derivative}: 
$$D_{g}:=\mathrm{Sym}\circ \nabla_{g}: C^{\infty}(M;S^mT^*M)\to C^{\infty}(M;S^{m+1}T^*M).$$
We note the following relation between the symmetrized covariant derivative and the generator of the geodesic flow (see \cite[Lemma 14.1.9]{Lef}):
\begin{equation}
    \label{eq:XandD}
    X_g\pi_m^*=\pi_{m+1}^*D_g.
\end{equation}
The formal adjoint of $D_g$ is the \emph{divergence} operator $D_{g}^*$:
$$D_{g}^*=-\tr_g \circ \nabla_{g}: C^{\infty}(M;S^mT^*M)\to C^{\infty}(M;S^{m-1}T^*M). $$
The \emph{rough Laplacian} is given by
\begin{equation}
    \nabla^*_g\nabla_g: C^{\infty}(M;S^mT^*M)\to C^{\infty}(M;S^{m}T^*M).
\end{equation}
When there is no risk of confusion on the metric, we will suppress the $g$ subscripts.

We will need the following identity (see for instance \cite[\S 2.2]{GuKnLef}):
\begin{equation}\label{eq:pi-trace}
    \int_{SM} \pi_2^* S \, dm_g = \frac{1}{n {\rm Vol}(M)}\int_M {\rm tr}(S) \, d {\rm vol}_g,
\end{equation}
where $dm_g$ is the Liouville measure associated to $g$ (normalized so that we have a probability measure). In particular, we will use that if $\tr(S)$ has zero mean for $d\mathrm{vol}_g$, then $\pi_2^* S$ has mean-zero for $dm_g$.

\subsection{Decomposition of the space of symmetric tensors.}
 There exists a natural gauge given by the action of the group $\mathrm{Diff}^0(M)$ of smooth diffeomorphisms homotopic to the identity. We define
$$\mathcal O(g):=\{\phi^*g\mid \phi \in \mathrm{Diff}^0(M)\},\quad T_{g_0}\mathcal O(g_0)=\{\mathcal L_Vg_0\mid V\in C^{\infty}(M;TM)\}. $$
We will  prove an injectivity result of the derivative of the Poincaré determinant on a ``transverse slice" to $T_{g}\mathcal O(g)$.
This is natural since $\mathcal{P}$ is invariant under isometries: $\mathcal{P}_g = \mathcal{P}_{\phi^* g}$ for any $\phi \in {\rm Diff^0}(M)$. 
We remark the following fact:
$$T_{g}\mathcal O(g)=\{D_{g}p\mid p\in C^{\infty}(M;TM)\}.$$
In particular, a natural transverse slice is provided by the kernel of the adjoint $D_{g}^*$ \cite[Theorem 14.1.10]{Lef}. Elements of $C^{\infty}(M;S^mT^*M)\cap \mathrm{Ker}(D_{g}^*)$ are called \emph{divergence-free} (or \emph{solenoidal}) tensors.
For any $S\in C^{\infty}(M;S^mT^*M)$, there exists a unique pair 
$$(p,h)\in C^{\infty}(M;S^{m-1}T^*M)\times \big(C^{\infty}(M;S^mT^*M)\cap \mathrm{Ker}(D_{g}^*)\big),\quad S=D_{g}p+h.$$
Using \eqref{eq:XandD}, the above decomposition can be written as $\pi_m^*S=X(\pi_{m-1}^*p)+\pi_m^*h.$ In particular, using \cite{DS} and the Liv\v sic theorem, 
\begin{equation}
    \label{eq:equivalence}
    S\in \mathrm{Ran}(D_g)\iff  \Pi_{\mathrm{Ker}(D_{g}^*)}(S)=0\iff \int_{\gamma}(\pi_2^*S)d\ell_\gamma=0 \quad \forall \text{ periodic orbits } \gamma,
\end{equation}
where $\Pi_{\mathrm{Ker}(D_{g}^*)}$ is the orthogonal projection onto $\mathrm{Ker}(D_{g}^*).$

We recall the following lemma which allows one to ``project" a metric $g$ onto solenoidal tensors. It was obtained in this form in \cite[Lemma 2.4]{GuKnLef}, but the idea goes back to Ebin \cite{Eb}.
\begin{lemma}[Slice lemma]
\label{slice lemma}
Let $k\geq 2$,  and $\alpha\in (0,1)$. Then there exists a neighborhood $\mathcal U$ of $g$ in the $C^{k,\alpha}$-topology such that for any $g'\in \mathcal U$, there is a unique $\phi_{g '}\in \mathrm{Diff}^0(M)$ of regularity $C^{k+1,\alpha}$, close to identity, such that $\phi_{g'}^*g'\in \mathrm{Ker}(D_{g}^*)$ is divergence-free. Moreover, there exists $\epsilon>0$ and $C>0$ such that
$$\|g'-g\|_{C^{k,\alpha}}\leq \epsilon \ \Rightarrow \ \|\phi_{g'}^*g'-g\|_{C^{k,\alpha}}\leq C\|g'-g\|_{C^{k,\alpha}}.$$
\end{lemma}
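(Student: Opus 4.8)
Here I follow the classical scheme, originating with Ebin, of applying the implicit function theorem in H\"older spaces to the gauge-fixing equation. The plan is to parametrize diffeomorphisms near the identity by vector fields via the Riemannian exponential map of $g$: for $V\in C^{k+1,\alpha}(M;TM)$ small, set $\phi_V(x):=\exp^g_x(V(x))$, which is a $C^{k+1,\alpha}$ diffeomorphism isotopic to the identity with $\phi_0=\mathrm{id}$. One then considers the map
\[
F: C^{k+1,\alpha}(M;TM)\times C^{k,\alpha}(M;S^2T^*M)\to C^{k-1,\alpha}(M;T^*M),\qquad F(V,g')=D_g^*\big(\phi_V^*g'\big).
\]
Pulling back a $C^{k,\alpha}$ tensor by a $C^{k+1,\alpha}$ diffeomorphism yields a $C^{k,\alpha}$ tensor, and $D_g^*$ loses exactly one derivative, so $F$ is well defined; the one-derivative loss is precisely absorbed by requiring $\phi_V$ to be one degree more regular than $g'$. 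Using the smoothness of $\exp^g$ together with the standard $\Omega$-lemma for composition operators, $F$ is (at least) $C^1$ near $(0,g)$. Since $\nabla_g g=0$ we have $F(0,g)=D_g^*g=0$.

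The key point is that the partial differential $d_VF(0,g)$ is an isomorphism. Since $\partial_s|_{s=0}\,\phi_{sV}^*g=\mathcal L_Vg$, which under the musical isomorphism is a nonzero constant multiple of $D_gV^\flat$ (cf. the identification $T_g\mathcal O(g)=\{D_gp\}$ in Section \ref{sec:prelim}), we get that $d_VF(0,g)(V)$ equals, up to a nonzero constant, $D_g^*D_gV^\flat$. The operator $D_g^*D_g$ on $1$-forms is second order, elliptic, self-adjoint and non-negative, with kernel equal to the space of Killing fields; as $(M,g)$ is closed and negatively curved it has $\mathrm{Ric}_g<0$, hence by Bochner's theorem admits no nontrivial Killing field, so $D_g^*D_g:C^{k+1,\alpha}(M;TM)\to C^{k-1,\alpha}(M;T^*M)$ is an isomorphism by elliptic Fredholm theory. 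The implicit function theorem then produces a $C^{k,\alpha}$-neighborhood $\mathcal U$ of $g$ and a $C^1$ map $\mathcal U\ni g'\mapsto V(g')\in C^{k+1,\alpha}(M;TM)$ with $V(g)=0$ and $F(V(g'),g')=0$; setting $\phi_{g'}:=\phi_{V(g')}$ gives a $C^{k+1,\alpha}$ diffeomorphism with $\phi_{g'}^*g'\in\mathrm{Ker}(D_g^*)$, unique among diffeomorphisms close to the identity by the local uniqueness in the implicit function theorem.

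For the stability estimate I would use that $g'\mapsto V(g')$ is $C^1$ and vanishes at $g$: shrinking $\mathcal U$ if necessary, $\|V(g')\|_{C^{k+1,\alpha}}\le C\|g'-g\|_{C^{k,\alpha}}$, hence $\|\phi_{g'}-\mathrm{id}\|_{C^{k+1,\alpha}}\le C\|g'-g\|_{C^{k,\alpha}}$. Writing $\phi_{g'}^*g'-g=\phi_{g'}^*(g'-g)+(\phi_{g'}^*g-g)$, the first term is controlled because pullback by $\phi_{g'}$ acts with uniformly bounded norm on $C^{k,\alpha}$ tensors when $\phi_{g'}$ is $C^{k+1,\alpha}$-close to the identity, and the second because $\phi\mapsto\phi^*g-g$ is locally Lipschitz near $\mathrm{id}$ (from $C^{k+1,\alpha}$ diffeomorphisms into $C^{k,\alpha}$ tensors); combining gives the claimed bound. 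The main technical obstacle is the careful verification that $F$ is genuinely $C^1$ between the stated H\"older spaces — that is, checking the regularity of the composition/pullback operators and that the derivative loss is exactly compensated by the choice of function spaces; once this is in place, the rest of the argument is formal. (Alternatively, one may simply invoke \cite[Lemma 2.4]{GuKnLef}, where this is carried out in detail.)
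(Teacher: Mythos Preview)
Your proposal is correct and is precisely the classical Ebin-style implicit function theorem argument. The paper does not supply its own proof of this lemma: it simply cites \cite[Lemma 2.4]{GuKnLef} (with attribution of the idea to Ebin \cite{Eb}), which carries out exactly the argument you sketch --- so your approach coincides with the paper's, and indeed you note this alternative citation yourself.
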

\subsection{Curvature tensors}
\label{sec:curv}
 For $v\in S_x^gM$, the \emph{normal bundle} is
$$\mathcal N_g(v):=\{w\in T_xM\mid g_x(v,w)=0\}. $$
The \emph{curvature tensor} of $g$ is
\begin{equation*}
    \label{eq:Rg}
    R_g\in C^\infty(S^gM;\mathrm{End}(\mathcal N_g)),\quad R_g(v)(w):=\mathbf{R}_g(w,v)v,
\end{equation*}
where $\mathbf{R}_g$ is the Riemannian curvature tensor of $g$.

The sign convention is such that for a hyperbolic metric $g_0$, one has $R_{g_0}=-\mathrm{Id}.$ The \emph{Ricci tensor} is
\begin{equation*}
    \label{eq:Ric}
    \mathrm{Ric}_g\in C^{\infty}(M;S^2T^*M),\quad \mathrm{Ric}_g(v,w):=\mathrm{tr}_g(y\mapsto \mathbf{R}_g(v,y)w)=\sum_{i=1}^ng(\mathbf{R}_g(v,e_i)w,e_i),
\end{equation*}
for any $g$-orthonormal basis $(e_i)_{i=1}^n.$ For $g_0$ hyperbolic, one has $\mathrm{Ric}_{g_0}=-(n-1)\mathrm{Id}.$
The \emph{scalar curvature} is
\begin{equation*}
    \label{eq:Scal}
    \mathrm{Scal}_g\in C^{\infty}(M),\quad \mathrm{Scal}_g(x):=\mathrm{tr}_g(\mathrm{Ric}_g)(x)=\sum_{i=1}^n\mathrm{Ric}_g(e_i,e_i),
\end{equation*}
for any $g$-orthonormal basis $(e_i)_{i=1}^n.$ For $g_0$ hyperbolic, one has $\mathrm{Scal}_{g_0}=-n(n-1).$ The \emph{total scalar curvature} is
\begin{equation*}
    \label{eq:S}
    \mathcal S(g):=\int_M \mathrm{Scal}_{g}d\mathrm{vol}_g.
\end{equation*}

\begin{definition}\label{def:lich}
   The \emph{Lichnerowicz Laplacian} is given by
\begin{equation}\label{eq:lich}
(\Delta_L)_gS:=\nabla^*_g\nabla_gS+\mathrm{Ric}_g\circ S+S\circ \mathrm{Ric}_g-2R_g^\circ(S),
\end{equation}
where, for any $g$-orthonormal basis $(e_i)_{i=1}^n$,
\begin{equation*}
\label{eq:Ricci} R_g^{\circ}(S)(X,Y)=-\sum_{i=1}^n S(\mathbf{R}_g(e_i,X)Y,e_i),\quad S\circ \mathrm{Ric}_g(X,Y)=\sum_{i=1}^n S(\mathbf{R}_g(e_i,X)e_i, Y).
\end{equation*}
\end{definition}
Note that when $g=g_0$ is a hyperbolic metric, one has
\begin{equation}
    \label{eq:LaplFla}
    \Delta_LS=\nabla^*\nabla S-2nS+2\tr(S)g_0,
\end{equation}
see \cite[Proof of Proposition 1.3.3]{Fla}.

\begin{definition}\label{def:MRC}
 The \emph{mean root curvature} of $g$ is 
\begin{equation}
    \label{eq:MRC}
\kappa(g):=\int_{S^g M}\mathrm{tr}\big((-R_g(v))^{1/2} \big)dm_g(v),
\end{equation}
 where $(-R_g(v))^{1/2}$ denotes the square root of the positive symmetric operator $-R_g(v)$.    
\end{definition}
Recall that by work of Osserman and Sarnak \cite{OS84}, one has
\begin{equation}
    \label{eq:OS}
    \kappa(g)\leq h_{\rm Liou}(g),
\end{equation}
where $h_{\rm Liou}(g)$ is the metric entropy of the $g$-geodesic flow with respect to the Liouville measure.  Moreover, equality in \eqref{eq:OS} holds if and only if $g$ is locally  symmetric.

\subsection{Riccati equation}
For each $v \in S^g M$, let $U_g(v) \in {\rm End}(\mathcal{N}_g(v))$ denote the second fundamental form of the unstable horosphere determined by $v$.
Then $U_g\in C^\alpha(S^gM,\mathrm{End}(\mathcal N_g))$ is a positive solution of the \emph{Riccati equation} 
\begin{equation}
    \label{eq:Riccati}
    \mathbf{X}_g(U_g)+(U_g)^2+R_g=0,
\end{equation}
where $\mathbf{X}_g$ denotes the natural action of $X_g$ on sections of $C^\alpha(S^gM,\mathrm{End}(\mathcal N_g))$ that are differentiable in the flow direction. We note that for $g=g_0$ hyperbolic, one has $U_{g_0}(v)=\mathrm{Id}_{\mathcal{N}(v)}$.

\section{Linearization of the Poincaré determinant}\label{sec:deriv}
In this section, we compute the first derivative of the Poincaré determinant at a hyperbolic metric $g_0.$ We start by expressing the Poincaré determinant using the positive solution to the Riccati equation $U_g.$ For a closed geodesic $\gamma$, we denote by $d\ell_{\gamma}$ the (non-normalized) one-dimensional Lebesgue measure supported on~$\gamma.$
\begin{lemma}
\label{lemm:Poincare}
Let $(M^n,g)$ be a smooth closed negatively curved manifold. Let $\mathcal{D}_g = \log \mathcal{P}_g$. Then 
    \begin{equation*}
        \label{eq:trUg}
        \forall c\in \mathcal C, \quad  \mathcal D_g(c)= \int_{\gamma_g(c)}\tr(U_g)d\ell_{\gamma_g(c)} .
    \end{equation*}
    
\end{lemma}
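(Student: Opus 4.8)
The plan is to relate the linearized Poincaré map $D_v\phi^T|_{E_u}$ to the second fundamental form $U_g$ of unstable horospheres along the closed geodesic $\gamma = \gamma_g(c)$, and then to compute its determinant by solving a linear ODE along $\gamma$. Recall that the unstable bundle $E_u(v)$ is, under the usual identification $T_v(SM) \cong \R X(v) \oplus \mathcal N_g(v) \oplus \mathcal N_g(v)$ (horizontal $\oplus$ vertical splitting using the Sasaki metric), the graph of $U_g(v) : \mathcal N_g(v) \to \mathcal N_g(v)$, i.e. $E_u(v) = \{ (w, U_g(v) w) : w \in \mathcal N_g(v)\}$. The first step is therefore to record this identification and to observe that under it, the restriction of $D_v\phi^t$ to $E_u$ is conjugate to the map $\mathcal N_g(v) \to \mathcal N_g(\phi^t v)$ sending an initial condition $w_0$ to $J_w(t)$, where $J_w$ is the unstable Jacobi field along $\gamma$ with $J_w(0) = w_0$ and $J_w'(0) = U_g(v) w_0$. (Equivalently: the unstable Jacobi fields are exactly those whose covariant derivative satisfies $J'(t) = U_g(\phi^t v) J(t)$ for all $t$.)

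Next I would compute the determinant. Fix a parallel orthonormal frame $(e_1(t), \dots, e_{n-1}(t))$ of $\mathcal N_g$ along $\gamma$, and for $w_0 \in \mathcal N_g(v)$ let $A(t)$ be the matrix in this frame of the linear map $w_0 \mapsto J_{w_0}(t)$; thus $A(0) = \mathrm{Id}$ and, from the previous step, $A'(t) = \mathfrak U(t) A(t)$ where $\mathfrak U(t)$ is the matrix of $U_g(\phi^t v)$ in the parallel frame. By Liouville's formula (Jacobi's formula for the derivative of a determinant), $\frac{d}{dt} \log \det A(t) = \tr \mathfrak U(t) = \tr(U_g(\phi^t v))$. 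Integrating over one period $t \in [0, T]$ and using $A(0) = \mathrm{Id}$ gives $\log \det A(T) = \int_0^T \tr(U_g(\phi^t v))\, dt = \int_{\gamma_g(c)} \tr(U_g)\, d\ell_{\gamma_g(c)}$. Since $A(T)$ represents $D_v\phi^T|_{E_u}$ in an orthonormal frame (the frame being parallel, hence orthonormal, and the determinant being conjugation-invariant, independent of the choice made in identifying $E_u$ with $\mathcal N_g(v)$), we get $\det(D_v\phi^T|_{E_u}) = \exp\!\left( \int_{\gamma_g(c)} \tr(U_g)\, d\ell_{\gamma_g(c)}\right)$, and taking logarithms yields $\mathcal D_g(c) = \log \mathcal P_g(c) = \int_{\gamma_g(c)} \tr(U_g)\, d\ell_{\gamma_g(c)}$, as claimed.

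The main point requiring care — and the step I expect to be the only genuine obstacle — is the bookkeeping in the first step: namely that $D_v\phi^t|_{E_u}$ is genuinely conjugate (not merely that its determinant equals that of) the Jacobi-field propagator restricted to unstable data, with the conjugation being by an isometry so that determinants match without an extra factor. One must check that projection $E_u(v) \to \mathcal N_g(v)$ onto the horizontal factor is a linear isomorphism (true in negative curvature since $E_u \cap \R X = 0$ and $E_u$ meets the vertical subbundle trivially), that it intertwines $D_v\phi^t$ with the Jacobi propagator, and that the determinant of $D_v\phi^T|_{E_u}$ computed via the Sasaki inner product on $E_u$ agrees with $\det A(T)$ computed in the parallel orthonormal frame on $\mathcal N_g(v)$ — this last point holds because the horizontal projection $E_u(v) \to \mathcal N_g(v)$ at the fixed endpoint $v = \phi^T v$ is the same isomorphism used at both ends, so it cancels. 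Once this identification is set up cleanly, the computation is the standard Liouville-formula argument above. I would also note that $U_g \in C^\alpha$ is differentiable along the flow direction (as stated before \eqref{eq:Riccati}), which is exactly the regularity needed to make sense of the ODE $A' = \mathfrak U A$ along $\gamma$ and to integrate.
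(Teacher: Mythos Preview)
Your proposal is correct and follows essentially the same route as the paper's proof: identify $E_u(v)$ with $\mathcal N_g(v)$ via the graph of $U_g$, observe that unstable Jacobi fields satisfy $J'(t)=U_g(\phi^t v)J(t)$ so the propagator $A(t)$ solves $A'=U A$, and then apply Liouville's formula and integrate over a period. Your extra care about the parallel frame and the endpoint identification is exactly the bookkeeping the paper handles implicitly via the covariant derivative $\tfrac{D}{dt}$ and the remark that the same isomorphism \eqref{ident} is used at $t=0$ and $t=T$.
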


\begin{proof}
Let $\gamma:=\gamma_g(c)$ and let $v = \gamma'(0) \in S_xM$.
For $J(t)$ a Jacobi field along $\gamma(t)$, we will use the notation $J'(t)$ for the covariant derivative $\nabla_{\gamma'(t)} J(t)$.
Let $V \in T_v(SM)$ and let $(V_h, V_v) \in T_x M \oplus T_x M$ denote its decomposition into horizontal and vertical components.
Then $$D \phi^t (V) = (J(t), J'(t)),$$ where $J$ is the Jacobi field along $\gamma$ with initial conditions $(J(0), J'(0)) = (V_h, V_v)$, see for instance \cite[Proposition 1.13]{Bal}.

Let $U(v)$ denote the second fundamental form of the unstable horosphere determined by $v$.
We make the identification
\begin{align}\label{ident}
    \mathcal{N}(v) \to E^u(v), \quad w \mapsto (w, U(v) w) \in E^u(v),
\end{align}
where the right-hand side is understood in terms of the  identification $T_x M \oplus T_x M \cong T_v(SM)$ using horizontal and vertical components.
Using \eqref{ident}, we can view $D \phi^t|_{E^u}$ as a linear map $A(t): \mathcal{N}(v) \to \mathcal{N}(\phi^t v)$.
Whenever $J(t)$ is an unstable Jacobi field along $\gamma(t)$, 
one checks that $J'(t) = U(\phi^t v)(J(t))$.
This means that the map $A(t): \mathcal{N}(v) \to \mathcal{N}(\phi^t v)$ satisfies the equation
\[
\frac{D}{dt} A(t) = U(\phi^t v) A(t), 
\]
where $\frac{D}{dt}$ denotes covariant differentiation along $\gamma(t)$. 
Therefore, 
\[
\frac{d}{dt} \det A(t) = \det A(t) \, {\rm tr}\left( A(t)^{-1} \frac{D}{dt} A(t) \right) \implies \frac{d}{dt} \log \det A(t) = {\rm tr}(U(\phi^t v)).
\]
Integrating the above from $0$ to $T$ completes the proof.
\end{proof}

By \cite{Con}, the mapping
\begin{equation}\label{eq:trU-g-reg}
C^k(M; S^2 T^*M) \to C^{\nu}(SM), \quad g \mapsto {\rm Tr}(U_g)
\end{equation}
is $C^{k-3}$ for any $\nu \in (0, 1)$. Since the map $g \mapsto \gamma_c(g)$ is also smooth, we deduce that $\mathcal D_g$ is smooth in $g$. We now compute its first derivative at a hyperbolic metric $g_0.$

\begin{proposition}\label{prop:deriv}
Let $(M^n,g_0)$ be a closed hyperbolic manifold and let $\Phi_g = \mathcal{D}_g / \mathcal{D}_{g_0}$. 
Then for any $S\in C^{\infty}(M;S^2T^*M)$,  the map $d_{g_0}\Phi(S) : \mathcal{C} \to \mathbb R$ is given by
\begin{equation*} 
\forall c\in \mathcal C, \quad  d_{g_0}\Phi(S)(c) = \dfrac{1}{\mathcal{D}_{g_0}(c)}\int_{\gamma_{g_0}(c)} \pi_2^*\mathcal R(S) d\ell_{g_0},
\end{equation*}
where $\mathcal R(S)=\tfrac 12 d_{g_0}\mathrm{Ric}(S) =\tfrac 14 \Delta_L S-\tfrac 12D_{g_0}D_{g_0}^*(S)-\tfrac 12\nabla d(\tr(S))$. 
\end{proposition}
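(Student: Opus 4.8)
The plan is to compute the first variation of $\mathcal{D}_g(c) = \int_{\gamma_g(c)} \mathrm{tr}(U_g)\, d\ell_{\gamma_g(c)}$, using Lemma \ref{lemm:Poincare}. There are a priori two contributions to $d_{g_0}\mathcal{D}(S)(c)$: one from the variation of the geodesic $\gamma_g(c)$ (and the induced change in the measure $d\ell$), and one from the variation of the integrand $\mathrm{tr}(U_g)$. First I would argue that the first contribution vanishes at $g_0$: since $\gamma_{g_0}(c)$ is a closed geodesic and $\mathrm{tr}(U_{g_0}) = \mathrm{tr}(\mathrm{Id}_{\mathcal{N}}) = n-1$ is constant, the term coming from moving the curve is (up to the usual first-variation-of-arclength computation) proportional to $\frac{d}{ds}\big|_{s=0}\ell_{g_s}(\gamma_{g_0}(c))$ times the constant $n-1$, plus a term involving $X_{g_0}(\mathrm{tr} U_{g_0})$ which vanishes since $\mathrm{tr} U_{g_0}$ is flow-invariant. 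More carefully, one writes $\mathcal{D}_g(c) = \int_0^{\ell_g(\gamma_g(c))} (\pi_2^* U_g \text{-type quantity}) \circ \phi_g^t(v_g)\, dt$ where $v_g = \gamma_g(c)'(0)$, and uses that the derivative of an integral of the form $\int_\gamma f_g\, d\ell$ over a closed geodesic, in the direction $S$, equals $\int_\gamma (d_{g_0}f(S))\, d\ell + (\text{boundary/reparametrization terms that cancel for a closed orbit when } f_{g_0} \text{ is flow-invariant})$; this is the standard computation behind $d_{g_0}\mathcal{L}(S)(c) = \tfrac12\int_{\gamma_{g_0}(c)} \pi_2^* S\, d\ell$, applied here with the extra flow-invariant weight.

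Next I would compute $d_{g_0}\mathrm{Tr}(U)(S)$, the variation of the trace of the second fundamental form of unstable horospheres. Differentiating the Riccati equation \eqref{eq:Riccati}, $\mathbf{X}_g(U_g) + U_g^2 + R_g = 0$, in the direction $S$ at $g = g_0$ (where $U_{g_0} = \mathrm{Id}$), yields a transport equation along the flow for $\dot U := d_{g_0}U(S)$: schematically $\mathbf{X}_{g_0}(\dot U) + 2\dot U + \dot R + (\text{terms from } d_{g_0}\mathbf{X}(S)(U_{g_0})) = 0$, where $\dot R = d_{g_0}R(S)$. Taking the trace and using that $\mathrm{tr}(\dot U)$ then satisfies a scalar transport equation $X_{g_0}(\mathrm{tr}\dot U) + 2\,\mathrm{tr}(\dot U) = -\,\mathrm{tr}(\dot R) + (\ldots)$, one can solve for the quantity $\int_{\gamma}\mathrm{tr}(\dot U)\,d\ell$ in terms of $\int_\gamma (\text{source terms})\, d\ell$ — the point being that integration of $X_{g_0}(\cdot)$ over a closed orbit gives zero, so $2\int_\gamma \mathrm{tr}(\dot U)\, d\ell = \int_\gamma(\text{source})\,d\ell$. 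This is exactly where I would invoke Flaminio's computation \cite{Fla} (cited in the strategy section and in \cite[Proposition 5.1.1]{Fla} / \cite[Proof of Proposition 1.3.3]{Fla}): it identifies the relevant combination of $d_{g_0}R(S)$, $d_{g_0}\mathrm{Ric}(S)$, and connection-variation terms, and packages the source into $\pi_2^*$ of an explicit symmetric $2$-tensor operator applied to $S$. The outcome is that $\int_\gamma \mathrm{tr}(\dot U)\, d\ell = \int_\gamma \pi_2^* \mathcal{R}(S)\, d\ell$ with $\mathcal{R}(S) = \tfrac12 d_{g_0}\mathrm{Ric}(S)$.

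Finally I would reconcile the two formulas for $\mathcal{R}(S)$: the identity $\mathcal{R}(S) = \tfrac12 d_{g_0}\mathrm{Ric}(S) = \tfrac14\Delta_L S - \tfrac12 D_{g_0}D_{g_0}^* S - \tfrac12 \nabla d(\mathrm{tr} S)$ is the classical linearization-of-Ricci formula. Concretely, the well-known formula (see \cite[Theorem 1.174]{Bes}) gives $2\, d_g\mathrm{Ric}(S) = \Delta_L S - 2\delta^*\delta S - \nabla d(\mathrm{tr}_g S)$ (modulo sign conventions and a factor on the divergence/codifferential term), where $\delta = D_g^*$ and $\delta^* = D_g$; I would simply cite this and match conventions, in particular using \eqref{eq:lich} for the definition of $\Delta_L$ and noting that on hyperbolic $g_0$ the formula \eqref{eq:LaplFla} can be used as a consistency check. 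Dividing through by $\mathcal{D}_{g_0}(c)$ (note $\mathcal{D}_{g_0}(c) = (n-1)\ell_{g_0}(\gamma_{g_0}(c))$ by Remark \ref{rem:MLS}, and $\Phi_g = \mathcal{D}_g/\mathcal{D}_{g_0}$ is the normalization dividing the $c$-component of $\mathcal{D}_g$ by the number $\mathcal{D}_{g_0}(c)$) gives the stated formula for $d_{g_0}\Phi(S)(c)$.

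I expect the main obstacle to be the careful bookkeeping in differentiating the Riccati equation: the operator $\mathbf{X}_g$ depends on $g$ (both the geodesic flow and the identification of normal bundles vary with $g$), so $d_{g_0}\mathbf{X}(S)$ contributes Christoffel-symbol variation terms that must be tracked and combined with $d_{g_0}R(S)$; getting the precise constants and verifying that everything organizes into $\pi_2^*$ of a symmetric $2$-tensor (so that the Livšic-type machinery of \eqref{eq:equivalence} applies downstream) is the delicate point. This is precisely the step where leaning on Flaminio's existing computation \cite{Fla} is essential rather than redoing it from scratch.
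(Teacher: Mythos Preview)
Your overall architecture matches the paper's: split $\partial_\lambda|_{\lambda=0}\mathcal{D}_{g_\lambda}(c)$ into a length/measure piece and a $\int_{\gamma_0}\tr(\dot U)$ piece, invoke Flaminio for the latter, and cite Besse for the Ricci linearization. But there is a bookkeeping error that happens to self-cancel. Your claim that ``the first contribution vanishes'' is false: since $\tr(U_{g_0})\equiv n-1$, the curve-and-measure variation equals
\[
(n-1)\,\partial_\lambda\big|_{\lambda=0}\,\ell_{g_\lambda}\!\big(\gamma_{g_\lambda}(c)\big)\;=\;\tfrac{n-1}{2}\int_{\gamma_0}\pi_2^*S\,d\ell_{\gamma_0},
\]
which is nonzero in general --- your own formula $d_{g_0}\mathcal L(S)(c)=\tfrac12\int_{\gamma_0}\pi_2^*S$ already shows this, and flow-invariance of $\tr(U_{g_0})$ does not kill it.

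Correspondingly, Flaminio's result (the paper cites \cite[Corollary~4.3.2]{Fla}) actually reads
\[
\int_{\gamma_0}\tr(\dot U)\,d\ell_{\gamma_0}\;=\;\tfrac12\int_{\gamma_0}\Big(\partial_\lambda\big|_{\lambda=0}\mathrm{Ric}_\lambda\,-\,(n-1)\,\pi_2^*S\Big)\,d\ell_{\gamma_0},
\]
not $\tfrac12\int_{\gamma_0}\partial_\lambda|_{\lambda=0}\mathrm{Ric}_\lambda$ as you assert. The extra $-\tfrac{n-1}{2}\int\pi_2^*S$ here exactly cancels the length term you dropped, so your endpoint is right but both intermediate identities are off by the same amount. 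The paper makes this cancellation explicit; you should too.
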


\begin{proof}
Let $(g_{\lambda})_{\lambda \in(-\epsilon, \epsilon)}$ be a deformation of $g_0$ such that $\partial_\la|_{\la=0}g_\la=S$. Fix a free homotopy class $c\in \mathcal C$. In the following computation, we will write $\gamma_{\lambda}$ instead of $\gamma_{\la}(c)$. Differentiating, we obtain
\begin{align*}
\partial_{\lambda}|_{\lambda = 0}\left( \int_{\gamma_{\lambda}} {\rm tr}(U_{\lambda}) d \ell_{\gamma} \right)&= \partial_{\lambda} |_{\lambda = 0}\left(\int_{\gamma_{\lambda}} (n-1) d \ell_{\la}\right) + \int_{\gamma_0} {\rm tr} (\partial_{\la}|_{\lambda = 0} U_{\la}) d \ell_{0},
\end{align*}
where we used that $\tr(U_0)=n-1$.
For the first term, we have
\begin{align*}\label{eq:Xray}
    \partial_{\lambda} |_{\lambda = 0}\int_{\gamma_{\lambda}} d \ell_{\la} &= \partial_{\la}|_{\lambda = 0} \ell_{g_{\la}} (\gamma_{\lambda}) = \partial_{\la}|_{\lambda = 0}\big( \ell_{g_{0}} (\gamma_{\lambda}) + \ell_{g_{\la}} (\gamma_{0}) \big)= \partial_{\la}|_{\lambda = 0} \int_{\gamma_0} g_{\lambda} (v, v) = \frac{1}{2} \int_{\gamma_0} \pi_2^* S,
\end{align*}
where we used that $\gamma_0$ minimizes the $g_0$-length in its free homotopy class. 

The second term was computed by Flaminio (see \cite[Corollary 4.3.2]{Fla}
\footnote{Note that in \cite{Fla} $U$ is the \emph{negative} Riccati solution, so the multiples of $\pi_2^*S$ differ by a sign.}) and is equal to
$$\int_{\gamma_0(c)}\tr(\partial_\la|_{\la=0}{U}_\la )d\ell_{\gamma_0(c)}=\frac{1}{2}\int_{\gamma_0(c)}\big(\partial_\la|_{\la=0}\mathrm{Ric}_\la -(n-1)\pi_2^*S \big)d\ell_{\gamma_0(c)}. $$
Using \cite[1.174 and 1.180 b)]{Bes}, we have
$$ \partial_\la|_{\la=0}\mathrm{Ric}_\la =\frac 12 \Delta_L S-D_{g_0}D_{g_0}^*(S)-\nabla d(\tr(S)), $$
which completes the proof.
\end{proof}

\section{Local Rigidity of the Poincaré determinant}\label{sec:local}

In this section, we will show Theorem \ref{theoMain} under the hypothesis that the operator $\mathcal R$ defined in Proposition \ref{prop:deriv} is \emph{solenoidal injective on tensors with zero mean}. Let
\begin{equation}
    \label{eq:divzeromean}
    \mathcal V_{g_0}:=\mathrm{Ker}(D_{g_0}^*)\cap \{cg_0\mid c\in \mathbb R\}^\perp.
\end{equation}
Elements of $\mathcal{V}_{g_0}$ are the divergence-free symmetric 2-tensors tangent to volume-preserving deformations of $g_0$. 
Note that if $S = c g_0$ or $S \in {\rm Ran}(D_g)$, then $S \in \mathrm{Ker} (\mathcal{R})$ by Remark \ref{rem:homothety} and \eqref{eq:equivalence}, respectively.
Let $\Pi_{\mathrm{Ker}(D_{g_0}^*)}$ denote the orthogonal projection onto $\mathrm{Ker}(D_{g_0}^*).$ We will say that $\mathcal R$ is \emph{solenoidal injective on tensors with zero mean} if $\Pi_{\mathrm{Ker}(D_{g_0}^*)}\mathcal R$ is injective on $\mathcal{V}_{g_0}$.
\begin{theorem}
\label{theomicrolocal}
Let $(M^n,g_0)$ be a closed hyperbolic manifold such that $\mathcal R$ is solenoidal injective on tensors with zero mean.
Then there is $N\in \mathbb N$ and $\epsilon>0$ such that for any negatively curved metrics $g$ with $\|g -g_0\|_{C^N}<\epsilon$ and $\mathrm{Vol}_g(M)=\mathrm{Vol}_{g_0}(M)$, one has $\mathcal P_{g}=\mathcal P_{g_0}$ if and only if there exists $\phi \in {\rm Diff}^0(M)$ such that $\phi^*g = g_0$.
\end{theorem}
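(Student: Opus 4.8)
The ``if'' direction is immediate from Remark~\ref{rem:homothety} with $c=1$: if $\phi^*g=g_0$ then $\mathcal P_g=\mathcal P_{\phi^*g}=\mathcal P_{g_0}$. For the converse, the plan is to follow the scheme of \cite{GL19,GuKnLef}: fix the gauge, linearize, convert the identity $\mathcal P_g=\mathcal P_{g_0}$ into a cohomological one via the Liv\v sic theorem, and feed the result into Guillarmou's generalized X-ray transform \cite{Gu}, with the injectivity of $\mathcal R$ as the crucial input.

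First I would fix the gauge. By the Slice Lemma~\ref{slice lemma}, after shrinking $\epsilon$ we may replace $g$ by $\phi_g^*g$ for a suitable $\phi_g\in\mathrm{Diff}^0(M)$ close to the identity, and so assume $h:=g-g_0\in\mathrm{Ker}(D_{g_0}^*)$ with $\|h\|_{C^N}$ small; this changes neither $\mathcal P_g$ (Remark~\ref{rem:homothety}) nor $\mathrm{Vol}_g(M)$, so it suffices to show $h=0$. Since $g_0\in\mathrm{Ker}(D_{g_0}^*)$, write $h=S+c\,g_0$ with $S\in\mathcal V_{g_0}$ (see~\eqref{eq:divzeromean}) and $c=\langle h,g_0\rangle_{L^2}/\langle g_0,g_0\rangle_{L^2}$. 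Expanding $d\mathrm{vol}_{g_0+h}=\big(1+\tfrac12\tr(h)+O(|h|^2)\big)\,d\mathrm{vol}_{g_0}$ and using $\int_M\tr(S)\,d\mathrm{vol}_{g_0}=\langle S,g_0\rangle_{L^2}=0$, the constraint $\mathrm{Vol}_g(M)=\mathrm{Vol}_{g_0}(M)$ forces $|c|\le C\|h\|_{C^0}^2$; thus the conformal part of $h$ is quadratically controlled by $h$, and it suffices to prove $S=0$ (then $|c|\le C'c^2$ with $|c|$ small forces $c=0$, hence $h=0$).

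Next I would linearize. By~\cite{Con}, smoothness of $g\mapsto\gamma_g(c)$, and structural stability of the Anosov flow $\phi^t_g$, there is a Morse orbit equivalence via which $\mathcal D_g(c)=\int_{\gamma_{g_0}(c)}F_g\,d\ell_{g_0}$, where $g\mapsto F_g$ is $C^2$ into $C^\nu(S^{g_0}M)$ with second derivative bounded near $g_0$, $F_{g_0}\equiv n-1$, and, by Proposition~\ref{prop:deriv} together with the Liv\v sic theorem (and $\mathcal R(g_0)=0$, cf.~\eqref{eq:LaplFla}), $d_{g_0}F(S)-\pi_2^*\mathcal R(S)\in\mathrm{Ran}(X_{g_0})$ for every direction $S$. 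Taylor expanding at $g_0$ along $g_0+\lambda h$, using $\mathcal P_g=\mathcal P_{g_0}$ (i.e. $\mathcal D_g\equiv\mathcal D_{g_0}$), and noting that exact terms $X_{g_0}(\cdot)$ integrate to zero over closed geodesics, one obtains
\begin{equation*}
\int_{\gamma_{g_0}(c)}\big(\pi_2^*\mathcal R(h)+r_g\big)\,d\ell_{g_0}=0\qquad\text{for all }c\in\mathcal C,
\end{equation*}
with $\|r_g\|_{C^\nu}\le C\|h\|_{C^N}^2$; by~\eqref{eq:equivalence} (which rests on~\cite{DS} and the Liv\v sic theorem) this means $\pi_2^*\mathcal R(h)+r_g=X_{g_0}u$ for some $u$. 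At the infinitesimal level ($r_g=0$, $h=S$) this reads $\pi_2^*\mathcal R(S)\in\mathrm{Ran}(X_{g_0})$, so $\Pi_{\mathrm{Ker}(D_{g_0}^*)}\mathcal R(S)=0$ by~\eqref{eq:equivalence}, whence $S=0$ by the hypothesis on $\mathcal R$; thus $d_{g_0}\Phi$ is injective on $\mathcal V_{g_0}$, and the task of the last step is to make this quantitative.

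Finally I would upgrade infinitesimal injectivity to the local statement, exactly as in \cite{GL19,GuKnLef}. Decomposing $\mathcal R(h)=D_{g_0}p+q$ with $q:=\Pi_{\mathrm{Ker}(D_{g_0}^*)}\mathcal R(h)=\Pi_{\mathrm{Ker}(D_{g_0}^*)}\mathcal R(S)$ (as $\mathcal R(g_0)=0$), the cohomological identity becomes $\pi_2^*q=X_{g_0}(u-\pi_1^*p)-r_g$ by~\eqref{eq:XandD}. Applying $\pi_{2*}\Pi$, where $\Pi$ is Guillarmou's operator \cite{Gu} with $\Pi X_{g_0}=0$, gives $\Pi_2 q=-\pi_{2*}\Pi r_g$ with $\Pi_2:=\pi_{2*}\Pi\pi_2^*$ a nonnegative pseudodifferential operator of order $-1$, elliptic on $\mathrm{Ker}(D_{g_0}^*)$; together with the principal-symbol fact that $\Pi_{\mathrm{Ker}(D_{g_0}^*)}\circ\mathcal R$ is an elliptic pseudodifferential operator of order $2$ on $\mathrm{Ker}(D_{g_0}^*)$ with kernel exactly $\R g_0$ (by the injectivity hypothesis), this yields a stability estimate bounding $\|S\|$, in a suitable Sobolev norm, by $\|r_g\|_{C^\nu}\le C\|h\|_{C^N}^2$. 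Feeding this into the interpolation/bootstrap scheme of \cite{GL19,GuKnLef} — which also absorbs the quadratically small conformal part $|c|\le C\|h\|_{C^0}^2$ — and using the a priori bound $\|h\|_{C^N}\le\epsilon$ with $N$ chosen large, one concludes that $h=0$ provided $\epsilon$ is small, i.e. $\phi_g^*g=g_0$. The main obstacle is this final step: one must reparametrize the period integrals $\int_{\gamma_g(c)}\tr(U_g)$ over $\gamma_{g_0}(c)$ with a quadratic remainder small in the appropriate (and necessarily weak, given the $C^\nu$, $\nu<1$, regularity of $g\mapsto\tr(U_g)$ from~\cite{Con}) topology; verify that $\Pi_2\circ\mathcal R$ fits the pseudodifferential framework of~\cite{Gu} so that the stability and interpolation machinery of~\cite{GL19,GuKnLef} transfers; and carefully separate the conformal direction $\R g_0$, on which $\mathcal R$ degenerates — which is exactly why the volume normalization is imposed and the hypothesis concerns only tensors with zero mean.
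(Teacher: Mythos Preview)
Your strategy is correct and matches the paper's scheme (slice lemma, linearize via Proposition~\ref{prop:deriv}, feed into the generalized X-ray transform $\Pi$, coercive estimate from Lemma~\ref{lemm:coer}, interpolate). The one substantive difference is how you pass from ``periods of $\pi_2^*\mathcal R(S)$ are $O(\|S\|^2)$'' to ``$Q(S)$ is small''. You pull $\mathcal D_g(c)$ back to $\gamma_{g_0}(c)$ via a Morse orbit equivalence to produce an honest remainder function $r_g\in C^\nu(SM)$, then apply the \emph{exact} Liv\v sic theorem to $\pi_2^*\mathcal R(h)+r_g$ and hit the result with $\Pi$. The paper instead Taylor-expands only the scalar functional $c\mapsto\Phi_g(c)$ (no orbit equivalence needed) to get the uniform period bound~\eqref{eq:ineq}, and then invokes the \emph{approximate} Liv\v sic theorem of Gou\"ezel--Lefeuvre \cite{GL} to write $\pi_2^*\mathcal R(S)=Xu+h$ with $\|h\|_{C^\alpha}\le C\|S\|_{C^3}^{1-\tau}\|S\|_{C^{5,\alpha}}^{2\tau}$, whence $Q(S)=\pi_{2*}\Pi h$.

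What each buys: your route is conceptually cleaner (exact cohomology) but requires that $g\mapsto F_g=(\tr U_g\circ\psi_g)\cdot(\text{time jacobian})$ be $C^2$ into $C^\nu(SM)$, which forces you to control simultaneously the H\"older regularity of the orbit equivalence $\psi_g$ and its smooth dependence on $g$, composed with the merely $C^\nu$ function $\tr U_g$---exactly the obstacle you flag, and it is genuinely delicate. The paper's route sidesteps this entirely: it needs only that $g\mapsto\Phi_g(c)$ is $C^2$ \emph{as a real-valued map} with second derivative bounded uniformly in $c$, which follows directly from \eqref{eq:trU-g-reg} and smoothness of $g\mapsto\gamma_g(c)$; the approximate Liv\v sic theorem then manufactures the needed $C^\alpha$ object on $SM$ from the period bound alone. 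Your careful separation of the conformal piece $c\,g_0$ via the volume constraint is also sound (the paper writes $S\in\mathcal V_{g_0}$ directly, tacitly absorbing the $O(\|S\|^2)$ conformal component into the quadratic remainder).
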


\begin{remark}
    In Theorem \ref{theoinj2}, we show that the operator $\mathcal R$ is solenoidal injective on tensors with zero mean in dimension $n=3$ which, together with Theorem \ref{theomicrolocal},  implies Theorem \ref{theoMain}.
\end{remark}

The key tool in the proof is the \emph{generalized X-Ray transform} $\Pi$. For any smooth function $f\in C^{\infty}(SM)$ such that $\int_{SM}f dm_g=0$, we define
\begin{equation}
\label{eq:Pi}
\quad \langle \Pi f,f\rangle=\int_{\mathbb R}\langle f\circ \varphi_t,f\rangle_{L^2}dt,
\end{equation}
where the integral converges by the exponential decay of correlations \cite{Liv}, see for instance \cite[Equation $(2.6)$]{GuKnLef}. A microlocal definition of the operator $\Pi$ was given originally in \cite{Gu}.
This operator was used crucially by Guillarmou--Lefeuvre in their proof of the local rigidity of the marked length spectrum \cite{GL, GuKnLef} and by Humbert for the proof of Katok's entropy conjecture near real and complex hyperbolic metrics \cite{Hum}.

We will use the following properties of $\Pi$:
\begin{enumerate}
    \item\label{PiX} One has $\Pi X = 0$, see \cite[Theorem 1.1]{Gu}.
    \item\label{Pi-bdd} The operator ${\pi_2}_* \Pi : C^s(SM) \to C^s(M,S^2 T^*M)$ is bounded for all $s > 0$ not an integer by Bonthonneau-Lefeuvre \cite[Lemma 5.10]{GBL}, see also \cite[Lemma 16.2.11]{Lef}.
\end{enumerate}

We now define the \emph{generalized Poincar{\'e} X-Ray transform} $Q(S) := {\pi_2}_* \Pi \pi_2^* \mathcal R(S) $. 
In analogy with the case of the geodesic X-ray transform, the injectivity of $\mathcal{R}(S)$ on solenoidal tensors with zero mean implies a coercive estimate for $Q$.
\begin{lemma}
\label{lemm:coer}
 For any $s\notin \mathbb Z$, there is $C>0$ such that for any $S\in \mathcal V_{g_0}$ which is orthogonal to $\mathrm{Ker}(\Pi_{\mathrm{ker}(D_{g_0}^*)}\mathcal R|_{\mathcal V_{g_0}} )$, one has
 $ \|S\|_{C^s} \leq C\|Q(S)\|_{C^{s-1}}.$
 In particular, if $\Pi_{\mathrm{Ker}(D_{g_0}^*)}\mathcal R$ is injective on $\mathcal V_{g_0}$, then $Q$ is elliptic and injective on $\mathcal V_{g_0}$. We have 
   \begin{equation}
   \label{eq:elliptic}
     \forall S\in \mathcal V_{g_0}, \quad \|S\|_{C^s} \leq C\|Q(S)\|_{C^{s-1}}.
   \end{equation}
\end{lemma}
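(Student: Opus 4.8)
The plan is to mimic the now-standard argument from Guillarmou--Lefeuvre \cite{GL19, GuKnLef} (see also \cite[Chapter 16]{Lef}) that converts an injectivity statement for an X-ray-type transform into a coercive estimate via ellipticity and a finite-dimensional-kernel argument. First I would establish that $Q = {\pi_2}_* \Pi \pi_2^* \mathcal{R}$ is a classical pseudodifferential operator of order $-1$ (after precomposition with the order-$2$ operator $\mathcal{R}$, so $Q$ has the same order as ${\pi_2}_* \Pi \pi_2^*$, which is $-1$ since $\Pi$ is a $\Psi$DO of order $-1$ by Guillarmou's microlocal description \cite{Gu}), and compute its principal symbol on the space $\mathcal{V}_{g_0}$ of divergence-free, zero-mean symmetric $2$-tensors. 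The key is that on the kernel of $D_{g_0}^*$, the principal symbol of ${\pi_2}_* \Pi \pi_2^*$ agrees (up to a positive constant and the symbol of $\mathcal R$) with that of the standard geodesic X-ray transform $\Pi_2 = {\pi_2}_* \Pi \pi_2^*$ studied in \cite{GuKnLef}; there one shows that $\Pi_2$ is elliptic when restricted to solenoidal tensors, because its symbol is positive-definite transversally to the symbol of $D_{g_0}$. Since $\mathcal{R}$ has injective principal symbol (it is, up to lower order, $\tfrac14 \Delta_L$ modulo the gauge terms $D_{g_0}D_{g_0}^*$ and $\nabla d \,\tr$, whose symbols vanish on solenoidal trace-free directions, and in general $\Delta_L$ is elliptic of order $2$), the composite $Q$ restricted to $\mathcal{V}_{g_0}$ is elliptic of order $-1$.

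Second, ellipticity on $\mathcal{V}_{g_0}$ gives a Gårding-type estimate with a compact remainder: $\|S\|_{C^s} \leq C\|Q(S)\|_{C^{s-1}} + C\|S\|_{C^{s-1}}$ for $S \in \mathcal{V}_{g_0}$ (this is where the $s \notin \mathbb{Z}$ hypothesis enters, so that $C^s = C^{s,0}$-boundedness of $\Psi$DOs holds; cf. property \eqref{Pi-bdd}). Standard functional analysis (the remainder term being compact relative to the leading term, plus a wavefront-set/microlocal regularity argument to handle the infinite-dimensional base) then shows that $\mathrm{Ker}(Q) \cap \mathcal{V}_{g_0}$ is finite-dimensional and consists of smooth tensors, and moreover that $\mathrm{Ker}(Q|_{\mathcal{V}_{g_0}}) = \mathrm{Ker}(\Pi_{\mathrm{Ker}(D_{g_0}^*)}\mathcal{R}|_{\mathcal{V}_{g_0}})$. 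For the latter identity: if $\Pi_{\mathrm{Ker}(D_{g_0}^*)}\mathcal{R}(S) = 0$ then $\pi_2^*\mathcal{R}(S) \in \mathrm{Ran}(X_{g_0})$ by \eqref{eq:equivalence} (and \eqref{eq:XandD}), and $\Pi X = 0$ by property \eqref{PiX}, so $Q(S) = 0$; conversely, $\langle \Pi f, f\rangle \geq 0$ with equality iff $f \in \overline{\mathrm{Ran}(X)}$, so $Q(S) = 0$ forces $\langle \Pi \pi_2^* \mathcal R(S), \pi_2^* \mathcal R(S)\rangle = 0$, hence $\pi_2^*\mathcal R(S) \in \overline{\mathrm{Ran}(X)}$, i.e. $\Pi_{\mathrm{Ker}(D_{g_0}^*)}\mathcal R(S) = 0$. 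Restricting the Gårding estimate to the complement of this finite-dimensional kernel and absorbing the compact remainder yields $\|S\|_{C^s} \leq C\|Q(S)\|_{C^{s-1}}$ for all $S \in \mathcal{V}_{g_0}$ orthogonal to $\mathrm{Ker}(\Pi_{\mathrm{Ker}(D_{g_0}^*)}\mathcal R|_{\mathcal V_{g_0}})$.

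Finally, under the injectivity hypothesis this kernel is trivial, so the estimate \eqref{eq:elliptic} holds for all $S \in \mathcal{V}_{g_0}$, and injectivity of $Q$ on $\mathcal{V}_{g_0}$ is immediate. I expect the main obstacle to be the symbolic computation showing that $Q$ is elliptic on $\mathcal{V}_{g_0}$: one must carefully track how the gauge terms $D_{g_0}D_{g_0}^*(S)$ and $\nabla d(\tr(S))$ in $\mathcal{R}$ interact with the principal symbol of ${\pi_2}_*\Pi\pi_2^*$ on divergence-free tensors, and verify that no cancellation destroys ellipticity — here the splitting \eqref{eq:decomp} into trace-free and pure-trace parts, together with the fact that $\mathcal{V}_{g_0}$ already excludes the pure-trace zero-mode $c g_0$, should be used to reduce to a model computation essentially identical to the one in \cite[\S 16.2]{Lef} for the geodesic X-ray transform on $2$-tensors.
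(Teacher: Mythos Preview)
Your approach is correct and is essentially the same as the paper's, which simply defers to \cite[Proposition 3.3]{Hum} after observing that $\mathcal{R}$ differs from the operator there only by sub-principal terms (so the principal-symbol and ellipticity analysis you outline carries over verbatim). One slip: $Q = ({\pi_2}_*\Pi\pi_2^*)\circ\mathcal{R}$ has order $+1$, not $-1$, since composition with the second-order $\mathcal{R}$ raises the order of the order-$(-1)$ operator $\Pi_2$ by two; this is in fact consistent with the G\aa rding estimate $\|S\|_{C^s} \leq C\|Q(S)\|_{C^{s-1}} + C\|S\|_{C^{s-1}}$ you (correctly) write down, and everything downstream is unaffected.
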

\begin{proof}
    This follows from the proof of \cite[Proposition 3.3]{Hum}
    because
    the operator $\mathcal R$ differs from the operator in \cite[Proposition 3.3]{Hum} only by sub-principal terms. 
\end{proof}
We can now prove Theorem \ref{theomicrolocal}.
\begin{proof}[Proof of Theorem \ref{theomicrolocal}]
   Let $g$ be such that  $\|g-g_0\|_{C^N}<\epsilon$ for a small $\epsilon $ and a large $N$ to be determined later and such that $\mathrm{Vol}_g(M)=\mathrm{Vol}_{g_0}(M)$. 
   We use the slice lemma (Lemma \ref{slice lemma}) and let $S=\phi_g^*g-g_0\in \mathcal V_{g_0}. $ We Taylor expand $\Phi_g$ near $g=g_0$ to obtain, using Proposition \ref{prop:deriv},
   \begin{equation}
       0=\Phi_g-\mathbf{1}=\Phi_{\phi_g^*g}-\mathbf{1}=\frac{1}{(n-1)\ell_{g_0}(c)}\int_{\gamma_0(c)}\pi_2^*\mathcal R(S)d\ell_{\gamma_0(c)}+\mathcal O(\|S\|^2_{C^{5,\alpha}})
   \end{equation}
   for all $c \in \mathcal{C}$ and where the $\mathcal{O}$ is uniform in $c\in \mathcal C.$ 
   This means that
   \begin{equation}
   \label{eq:ineq}
\dfrac{1}{\ell_{g_0}(c)} \int_{\gamma_0(c)} \pi_2^* \mathcal R(S) d\ell_{\gamma_0(c)} = \mathcal{O}(\|S\|^2_{C^{5,\alpha}}), \qquad \forall c \in \mathcal{C}.
   \end{equation}

By the approximate Liv\v sic theorem of Gou{\"e}zel and Lefeuvre \cite{GL} (see also \cite[Theorem 11.1.5]{Lef}), one has
\[
\pi_2^* \mathcal R(S) = Xu + h,
\]
where $u, h \in C^\alpha(SM)$, and $\|h\|_{C^\alpha} \leq C \|\pi_2^*\mathcal R(S)\|_{C^1}^{1-\tau} \|S\|^{2\tau}_{C^{5, \alpha}}$. 
Here, the constants $C,\alpha,\tau > 0$ are uniform in $S$ and only depend on the geodesic flow of $g_0$. 
Since $\mathcal R$ is a differential operator of order $2$, we have $\Vert \pi_2^* \mathcal{R}(S) \Vert_{C^1} \leq C \Vert S \Vert_{C^3}$, which implies that
\[
\|h\|_{C^\alpha} \leq C \|S\|_{C^3}^{1-\tau} \|S\|^{2\tau}_{C^{5,\alpha}} \leq C \|S\|^{1+\tau}_{C^{5,\alpha}}.
\]
Using property \eqref{Pi-bdd} above, we have
\[
\|{\pi_2}_* \Pi h\|_{C^\alpha} \leq C \|h\|_{C^\alpha} \leq C \|S\|^{1+\tau}_{C^{5,\alpha}}.
\]
Next, using property \eqref{PiX} above, we have 
$Q(S)= {\pi_2}_* \Pi h$. 
Applying the coercive estimate \eqref{eq:elliptic}, we find
\[
\|S\|_{C^{1+\alpha}} \leq C\|Q(S)\|_{C^\alpha} = C\|{\pi_2}_* \Pi h\|_{C^\alpha} \leq C' \|S\|^{1+\tau}_{C^{5,\alpha}}.
\]
Interpolating between Hölder spaces, we obtain
\[
\|S\|^{1+\tau}_{C^{5,\alpha}} \leq C \|S\|_{C^{1+\alpha}} \|S\|^{\tau}_{C^N},
\]
for  $N=5+\alpha +\tfrac 4 \tau$. This yields
\[
\|S\|_{C^{1+\alpha}} \leq C\|S\|_{C^{1+\alpha}} \|S\|^{\tau}_{C^N}.
\]
Now, suppose that $\|S\|_{C^N} \leq 1/(2C)$. This forces $S \equiv 0$.   
\end{proof}

\section{Injectivity on $\mathcal V_{g_0}$ in dimension $3$}\label{sec:inj}
In this section we show the following result.
\begin{theorem}
\label{theoinj2}
    Let $(M^3,g_0)$ be a closed hyperbolic $3$-manifold. Then $\Pi_{\mathrm{Ker}(D_{g_0}^*)}\mathcal R$ is injective on~$\mathcal V_{g_0}.$
\end{theorem}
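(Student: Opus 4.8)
The plan is to reduce Theorem~\ref{theoinj2} to the injectivity of the Lichnerowicz Laplacian on TT tensors (Theorem~\ref{Lich-inj}), and then to prove the latter via the mean root curvature. First I would unpack what $\Pi_{\mathrm{Ker}(D_{g_0}^*)}\mathcal R$ injective on $\mathcal V_{g_0}$ means: take $S \in \mathcal V_{g_0}$, i.e.\ $D_{g_0}^* S = 0$ and $S \perp \{c g_0\}$, and suppose $\Pi_{\mathrm{Ker}(D_{g_0}^*)}\mathcal R(S) = 0$; we want $S = 0$. Using $\mathcal R(S) = \tfrac14 \Delta_L S - \tfrac12 D_{g_0}D_{g_0}^*(S) - \tfrac12 \nabla d(\tr S)$ from Proposition~\ref{prop:deriv}, and $D_{g_0}^* S = 0$, the middle term drops. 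The hypothesis $\Pi_{\mathrm{Ker}(D_{g_0}^*)}\mathcal R(S) = 0$ says $\mathcal R(S) \in \mathrm{Ran}(D_{g_0})$, so $\tfrac14 \Delta_L S - \tfrac12 \nabla d(\tr S) = D_{g_0} p$ for some vector field $p$. The first task is to split $S$ into its trace-free part $S_0$ and its conformal part $f g_0$ (with $\int_M f\, d\mathrm{vol}_{g_0} = 0$ since $S \perp g_0$), and to analyze the trace and divergence of this identity to pin down $f$ and show $S_0$ is actually a TT tensor.

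Concretely, I would first take $\tr_{g_0}$ of the identity. Using \eqref{eq:LaplFla}, $\Delta_L S = \nabla^*\nabla S - 2nS + 2(\tr S) g_0$, so $\tr(\Delta_L S) = \tr(\nabla^*\nabla S) - 2n \tr S + 2n \tr S = \Delta(\tr S)$ (scalar rough Laplacian commuting with trace on a parallel metric), while $\tr(\nabla d \tr S) = \Delta(\tr S)$ up to sign conventions, and $\tr(D_{g_0} p) = -D_{g_0}^* $ ... — the point is that taking traces and divergences reduces the conformal part to a scalar elliptic equation whose only solution with zero mean is $f \equiv 0$; then taking $D_{g_0}^*$ of the remaining identity $\tfrac14 \Delta_L S_0 = D_{g_0} p$ and using that $\Delta_L$ preserves $\mathrm{Ker}(D_{g_0}^*)$ on trace-free tensors (Lemma~\ref{lemm:DFG}, cited as ``to appear'' in the excerpt) forces $D_{g_0}^* D_{g_0} p = 0$, hence $D_{g_0} p = 0$, hence $\Delta_L S_0 = 0$ with $S_0$ TT. At that stage Theorem~\ref{Lich-inj} gives $S_0 = 0$, so $S = 0$.

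It then remains to prove Theorem~\ref{Lich-inj}: $\Delta_L$ is injective on TT tensors on a closed hyperbolic $3$-manifold. Here the argument is the geometric one sketched in Section~\ref{sub:strat}. Suppose $S \ne 0$ is TT with $\Delta_L S = 0$. By \cite[Proposition 5.1.1]{Fla}, such $S$ lies in the kernel of the operator governing the Hessian of the Liouville entropy, so $d^2_{g_0} h_{\mathrm{Liou}}(S, S) = 0$ along a volume-preserving deformation $g_\lambda$ with $\partial_\lambda|_0 g_\lambda = S$. Since $g_0$ is a critical point of both $h_{\mathrm{Liou}}$ and $\kappa$ (restricted to fixed volume), and $\kappa(g_0) = h_{\mathrm{Liou}}(g_0)$ with $\kappa \le h_{\mathrm{Liou}}$ everywhere by \eqref{eq:OS}, Taylor expansion gives $d^2_{g_0}\kappa(S,S) \le d^2_{g_0} h_{\mathrm{Liou}}(S,S) = 0$. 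On the other hand, in Proposition~\ref{prop:hess} one computes $d^2_{g_0}\kappa$ on trace-free directions explicitly; in dimension $3$ the curvature tensor is determined by the Ricci tensor (Lemma~\ref{lemma}), and this is exactly what lets one show $d^2_{g_0}\kappa(S,S) > 0$ for $S \ne 0$ — a contradiction.

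The main obstacle is the positivity $d^2_{g_0}\kappa(S,S) > 0$ in dimension $3$. The Hessian of $\kappa$ involves differentiating $\tr((-R_g)^{1/2})$ twice, which brings in the derivative of a matrix square root and mixes first and second variations of the curvature tensor; controlling the sign requires both the explicit computation of Proposition~\ref{prop:hess} and the dimension-$3$ identity expressing $\mathbf R_{g_0}$ through $\mathrm{Ric}_{g_0}$ (Lemma~\ref{lemma}) so that everything reduces to a pointwise quadratic form in $S$ (after using that $S$ is TT to discard divergence and trace contributions). Verifying that this quadratic form is positive definite — rather than merely nonnegative, which would only give a weak conclusion — is the delicate step; by contrast, the reduction in the first two paragraphs is essentially formal, relying on the decomposition \eqref{eq:decomp}, the mapping properties of $D_{g_0}, D_{g_0}^*, \Delta_L$, and the fact that the only zero-mean scalar in the kernel of an elliptic operator like $\Delta - \mathrm{const}$ (with the constant not in the spectrum) vanishes.
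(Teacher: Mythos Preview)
Your proposal is correct and matches the paper's approach: reduce $\Pi_{\mathrm{Ker}(D_{g_0}^*)}\mathcal R(S)=0$ to $\mathcal R(S)=0$ via Lemma~\ref{lemm:DFG} (the paper applies it directly to the divergence-free $S$ to kill $D_{g_0}p$ \emph{before} splitting---slightly cleaner than your order, since your trace-free piece $S_0$ is not divergence-free until $f=0$), then split $S$ into conformal and trace-free parts to arrive at $\Delta_L S_0=0$ with $S_0$ TT, and prove Theorem~\ref{Lich-inj} by comparing the Hessians of $h_{\mathrm{Liou}}$ and $\kappa$ at $g_0$ using Proposition~\ref{prop:hess} and the dimension-3 identity of Lemma~\ref{lemma}. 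The explicit computation in the paper yields $\partial^2_\lambda|_{\lambda=0}\kappa = \|\pi_2^*S\|_{L^2}^2$, confirming the strict positivity you correctly flag as the crux.
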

We will reduce the above statement to the injectivity of $\mathcal{R}$.  
We start with some preliminary considerations which are valid in any dimension. In particular, this will allow us to quickly complete the proof of Theorem \ref{theo-conf}, the local rigidity of $\mathcal{P}_g$ in a conformal class in any dimension.

\begin{lemma}
\label{lemm:DFG}
    Let $(M^n, g_0)$ be a hyperbolic manifold. Let $S$ be a divergence-free symmetric 2-tensor. Then the symmetric 2-tensor $\mathcal R(S)$ is also divergence-free. 
\end{lemma}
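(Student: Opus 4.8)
The plan is to show $D_{g_0}^* \mathcal{R}(S) = 0$ by exploiting the explicit formula
\[
\mathcal{R}(S) = \tfrac14 \Delta_L S - \tfrac12 D_{g_0} D_{g_0}^* S - \tfrac12 \nabla d(\tr(S))
\]
from Proposition \ref{prop:deriv}, together with the assumption $D_{g_0}^* S = 0$. Under this assumption the middle term drops out, so I must show that $D_{g_0}^*\bigl(\tfrac14 \Delta_L S - \tfrac12 \nabla d(\tr S)\bigr) = 0$. I would first treat the case $S$ TT (trace-free), for which the last term also vanishes and one only needs $D_{g_0}^* \Delta_L S = 0$; this is exactly the commutation property $\Delta_L$ preserves TT tensors, alluded to in the excerpt as Lemma \ref{lemm:DFG}'s downstream use and attributed to standard references (e.g.\ Besse \cite{Bes}, or it follows from the general identity $D_g^* \Delta_L = \Delta_L D_g^*$ on hyperbolic space after accounting for the trace part). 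For the general divergence-free $S$, I would decompose $S = S_0 + \tfrac{1}{n}(\tr S)\, g_0$ with $S_0$ trace-free, note that $\tr S$ is then a function $f$ with $\nabla_{g_0}$-properties constrained by $D_{g_0}^* S = 0$, and handle the two pieces separately.

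The key computation is therefore the interaction of $D_{g_0}^*$ with the trace part. Writing $f = \tr(S)$, one has $D_{g_0}^*(f g_0) = -\nabla f$ (up to a universal constant depending on the normalization of $D^*$ and the dimension — I would pin this down from the conventions in Section \ref{sec:prelim}), and $D_{g_0}^*(\nabla d f) = D_{g_0}^* \operatorname{Hess}(f)$, which by a standard Bochner/commutator identity on a hyperbolic manifold equals $-\nabla \Delta f - \operatorname{Ric}_{g_0}(\nabla f)^\flat = -\nabla(\Delta f) + (n-1)\nabla f$, using $\operatorname{Ric}_{g_0} = -(n-1)\mathrm{Id}$. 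Meanwhile, since $D_{g_0}^* S = 0$ and $D_{g_0}^*$ kills $S_0$ up to the trace contribution, one extracts a relation forcing $f$ to be (at least) constrained; in fact the cleanest route is: apply the operator identity $\Delta_L$ from \eqref{eq:LaplFla}, namely $\Delta_L S = \nabla^*\nabla S - 2nS + 2(\tr S) g_0$, so that
\[
\mathcal{R}(S) = \tfrac14 \nabla^*\nabla S - \tfrac{n}{2} S + \tfrac12 (\tr S) g_0 - \tfrac12 \nabla d(\tr S),
\]
(using $D_{g_0}^* S = 0$), and then compute $D_{g_0}^*$ of the right-hand side term by term using the commutation of $D_{g_0}^*$ with $\nabla^*\nabla$ on the hyperbolic background (which produces only curvature terms, all multiples of the identity) plus the trace identities above. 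The curvature terms and the $-\tfrac{n}{2}S$ term will combine, and the $\nabla^*\nabla$, $(\tr S)g_0$, and $\operatorname{Hess}(\tr S)$ contributions should cancel against each other by the second Bianchi / contracted-Bianchi identity (equivalently, by the fact that $\partial_\lambda \operatorname{Ric}$ along any path is divergence-free modulo the linearized scalar-curvature term, which is precisely the linearized contracted Bianchi identity $D_g^* \operatorname{Ric}_g = -\tfrac12 d\operatorname{Scal}_g$). Indeed the conceptual shortcut is: $\mathcal{R}(S) = \tfrac12 d_{g_0}\operatorname{Ric}(S)$, and differentiating the contracted second Bianchi identity $D_g^* \operatorname{Ric}_g = -\tfrac12 d\,\operatorname{Scal}_g$ at $g_0$ in the direction $S$ gives $D_{g_0}^* \mathcal{R}(S)$ in terms of $d_{g_0}\operatorname{Scal}(S)$ and a correction from differentiating $D_g^*$ itself; when $D_{g_0}^* S = 0$ the correction term simplifies, and one checks $d_{g_0}\operatorname{Scal}(S) = \Delta(\tr S) + D_{g_0}^* D_{g_0}^* S - \langle \operatorname{Ric}_{g_0}, S\rangle = \Delta(\tr S) + (n-1)\tr S$, whose gradient is then shown to be absorbed. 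I would present this Bianchi-based argument as the main line, since it is coordinate-free and short.

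The main obstacle I anticipate is bookkeeping the precise constants and signs in the normalization-dependent identities — in particular $D_{g_0}^*(fg_0)$, the commutator $[D_{g_0}^*, \nabla^*\nabla]$ on symmetric 2-tensors over a space of constant curvature $-1$, and the formula for $d_{g_0}\operatorname{Scal}(S)$ — since an error in any of these would break the cancellation. None of these is deep, but they must be aligned with the conventions fixed in Section \ref{sec:prelim} (the sign in $D_g^* = -\tr_g \circ \nabla_g$, the curvature sign convention $R_{g_0} = -\mathrm{Id}$, and the Bochner formula in the form used in \cite{Fla}). A secondary subtlety is that I should confirm the argument genuinely needs only $D_{g_0}^* S = 0$ and not $\tr S = 0$, so that the conformal-deformation application (needed for Theorem \ref{theo-conf}) is covered; this is why I keep the trace term throughout rather than specializing to TT tensors. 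Once the constants are fixed, the proof is a few lines: substitute the formula for $\mathcal{R}$, apply $D_{g_0}^*$, invoke the linearized contracted Bianchi identity together with the two trace identities, and collect terms to get zero.
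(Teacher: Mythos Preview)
Your proposal is correct, and the direct-computation route you outline is precisely what the paper does: it quotes the exact commutator $[\nabla^*\nabla, D^*_{g_0}]S = -(n+1)D^*_{g_0}S - 2D_{g_0}(\tr S)$ from \cite{DFG} (so the constants you were worried about are supplied there), simplifies $4\mathcal{R}(S)$ for divergence-free $S = S_0 g_0 + S_2$ using \eqref{eq:LaplFla}, and then checks that $D^*_{g_0}$ of the result vanishes via the relation $D^*_{g_0}S_2 = \nabla S_0$ coming from $D^*_{g_0}S = 0$. Your preferred ``main line'' via the linearized contracted Bianchi identity ($\mathcal{R}(S) = \tfrac12\, d_{g_0}\mathrm{Ric}(S)$ and differentiate $D^*_g\mathrm{Ric}_g = -\tfrac12\, d\,\mathrm{Scal}_g$) is not the paper's route but is a valid and more conceptual alternative: it would work on any Einstein background rather than relying on the constant-curvature commutator, at the price of having to compute the variation term $(\partial_\lambda D^*_g)|_{\lambda=0}\mathrm{Ric}_{g_0}$ carefully. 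Either way the trace term must be carried through, as you note, and both arguments genuinely use only $D^*_{g_0}S=0$.
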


\begin{proof}
    Since $g_0$ is hyperbolic, we have the following additional commutation relation:
\begin{equation}
    [\nabla^* \nabla, D^*_{g_0}]S = -(n+1) D_{g_0}^* S - 2 D_{g_0}( {\tr}(S)),
\end{equation}
    see \cite[Equation (C.1)]{DFG}. For $S \in C^{\infty}(M, S^2 TM)$ a symmetric 2-tensor, write $S = S_0 g_0 + S_2$, where $S_0 \in C^{\infty}(M)$ and $S_2$ trace-free, see \eqref{eq:decomp}. 
    Using the definition of $\mathcal R$ in Proposition~\ref{prop:deriv}, together with \eqref{eq:LaplFla}, we see that if $S$ is divergence-free, then 
    $$4\mathcal R(S)=\nabla^* \nabla S - 2n S_2. $$
    This means that
\begin{align*}
    D_{g_0}^* (\nabla^* \nabla S - 2n S_2) = 2 D_{g_0} ({\tr}(S)) - 2n D_{g_0}^* S_2= 2n D_{g_0}(S_0) - 2n D_{g_0}^* S_2.
\end{align*}
    The last line is zero by the divergence-free condition:
    \begin{equation}
    0 = D_{g_0}^*S = D_{g_0}^* S_2 - {\tr}(\nabla(S_0g_0)) = D_{g_0}^* S_2 -  \nabla S_0,
\end{equation}
which completes the proof. 
\end{proof}

Using the previous lemma, we obtain the following

\begin{proposition}
\label{prop:DFG}Let $(M^n, g_0)$ be a hyperbolic manifold.
     Let $S\in \mathcal V_{g_0}\cap \mathrm{Ker}(\Pi_{\mathrm{Ker}(D_{g_0}^*)}\mathcal R).$ Then $\mathcal R(S)=0.$ 
\end{proposition}

\begin{proof}[Proof]
Suppose that $S\in \mathrm{Ker}(D_{g_0}^*)\cap \mathrm{Ker}(\Pi_{\mathrm{Ker}(D_{g_0}^*)}\mathcal R).$ This means there exists $p\in C^{\infty}(M;T^*M)$ such that $\mathcal R(S)=D_{g_0}p$.
By the previous lemma, we have $D_{g_0}^* \mathcal{R}(S) = 0$, which gives
$$\Vert \mathcal R(S) \Vert^2_{L^2(M;S^2T^*M)} = \langle \mathcal R(S), D_{g_0}p \rangle_{L^2(M;S^2T^*M)} = \langle D_{g_0}^* \mathcal R(S), p \rangle_{L^2(M;S^2T^*M)} = 0,$$ as desired.
\end{proof}

In light of Proposition \ref{prop:DFG}, Theorem \ref{theoinj2} reduces to the following 
\begin{proposition}
\label{proplast}
    Let $(M^3,g_0)$ be a closed hyperbolic $3$-manifold. Then $\mathcal R$ is injective on~$\mathcal V_{g_0}.$
\end{proposition}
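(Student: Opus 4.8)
\textbf{Proof strategy for Proposition \ref{proplast}.} The plan is to reduce the injectivity of $\mathcal{R}$ on $\mathcal{V}_{g_0}$ to the injectivity of the Lichnerowicz Laplacian $\Delta_L$ on TT tensors (Theorem \ref{Lich-inj}), and then to establish that latter statement via the mean root curvature $\kappa$. For the first reduction, take $S \in \mathcal{V}_{g_0}$ with $\mathcal{R}(S) = 0$. Write $S = S_0 g_0 + S_2$ with $S_2$ trace-free. Because $S$ is divergence-free, the computation in the proof of Lemma \ref{lemm:DFG} already shows $D_{g_0}^* S_2 = \nabla S_0$ and that $4\mathcal{R}(S) = \nabla^*\nabla S - 2n S_2$. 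I would first argue that $S_0$ must be constant: taking the trace of $\mathcal{R}(S) = 0$ and using \eqref{eq:LaplFla} (so $\tr(\Delta_L S) = \Delta \tr(S) + 2(n-1)\tr(S)$ up to the precise constants) together with $\tr(S) = n S_0$ having zero mean (since $S \perp g_0$ and by \eqref{eq:pi-trace}) gives an equation forcing $S_0 \equiv 0$ — here one exploits that the relevant scalar operator is injective on zero-mean functions. Once $S_0 = 0$, the tensor $S = S_2$ is trace-free and divergence-free, i.e. a TT tensor, and $\mathcal{R}(S) = \tfrac14 \Delta_L S = 0$; then Theorem \ref{Lich-inj} gives $S = 0$.

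\textbf{Proof of Theorem \ref{Lich-inj} (the TT-injectivity of $\Delta_L$).} This is the real content. Suppose $S$ is a TT tensor with $\Delta_L S = 0$. By \cite[Proposition 5.1.1]{Fla}, such an $S$ is an infinitesimal deformation along which the Hessian of the Liouville entropy vanishes: if $(g_s)$ is a volume-preserving family with $\partial_s|_{s=0} g_s = S$, then $d^2_{g_0}(h_{\rm Liou})(S,S) = 0$. Now invoke the Osserman--Sarnak inequality \eqref{eq:OS}: $\kappa(g) \leq h_{\rm Liou}(g)$ for all negatively curved $g$, with equality at the hyperbolic $g_0$. Since $g_0$ is a critical point of both functionals (restricted to fixed-volume metrics), Taylor expansion of $\kappa(g_s) \leq h_{\rm Liou}(g_s)$ to second order yields $d^2_{g_0}\kappa(S,S) \leq d^2_{g_0}(h_{\rm Liou})(S,S) = 0$. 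On the other hand, one computes $d^2_{g_0}\kappa$ directly in trace-free directions (this is Proposition \ref{prop:hess}): differentiating $\kappa(g) = \int_{S^gM} \tr((-R_g(v))^{1/2})\, dm_g$ twice, using that $-R_{g_0} = \mathrm{Id}$, one gets a quadratic form in $S$ involving $d_{g_0}R(S)$ and $d_{g_0}\mathrm{Ric}(S)$ (and the second variation of the square-root and of the Liouville measure). In dimension $n = 3$, the curvature tensor is algebraically determined by the Ricci tensor (Lemma \ref{lemma}), so this quadratic form can be rewritten purely in terms of the first variation of Ricci, and one checks it is strictly positive: $d^2_{g_0}\kappa(S,S) > 0$ whenever $S \neq 0$ is a nonzero TT tensor. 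This contradicts $d^2_{g_0}\kappa(S,S) \leq 0$ unless $S = 0$, completing the proof of Theorem \ref{Lich-inj} and hence of Proposition \ref{proplast}.

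\textbf{Main obstacle.} The delicate step is the computation and positivity of $d^2_{g_0}\kappa(S,S)$ in dimension $3$. Computing the Hessian of $\kappa$ requires carefully expanding $\tr((-R_g)^{1/2})$ to second order — the square-root is not a polynomial, so the second variation produces both a term with $d^2 R$ and a term quadratic in $d R$ with the specific numerical coefficients coming from $\frac{d^2}{dt^2}\sqrt{1+t}$ at $t = 0$ — and one must also track the second variation of the Liouville measure $dm_g$. The positivity is genuinely dimension-$3$-specific: in higher dimensions $\kappa$ (like $h_{\rm Liou}$) can have a saddle at $g_0$, so the argument must use the Ricci-determines-curvature identity (Lemma \ref{lemma}) in an essential way to collapse the quadratic form onto a manifestly positive expression in $d_{g_0}\mathrm{Ric}(S)$, after integrating over the fibers of $S^{g_0}M$ (which converts fiberwise averages of $\pi_2^*$-type quantities into multiples of $L^2(M)$-norms via identities like \eqref{eq:pi-trace}). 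Verifying that no cancellation destroys strict positivity — and that the only zero is $S = 0$, not merely $S$ in some proper subspace — is where the care is needed. The reduction in the first paragraph (handling the conformal part $S_0$) is comparatively routine once the structure of $\mathcal{R}$ on divergence-free tensors from Lemma \ref{lemm:DFG} is in hand.
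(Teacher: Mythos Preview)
Your proposal is correct and follows essentially the same route as the paper. One clarification worth flagging: the strict positivity $d^2_{g_0}\kappa(S,S) > 0$ is obtained only for TT tensors $S$ with $\Delta_L S = 0$ (equivalently $d_{g_0}\mathrm{Ric}(S) = 0$, i.e.\ $\mu = 0$ in Lemma~\ref{lemma}), not for arbitrary TT tensors --- indeed $\kappa$ can have a saddle at $g_0$ even in dimension $3$ (Theorem~\ref{kappa-saddle}) --- and the quadratic form collapses to $\|\pi_2^* S\|_{L^2}^2$, an expression in $S$ itself rather than in $d_{g_0}\mathrm{Ric}(S)$ (which vanishes under the hypothesis).
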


Now recall from Proposition \ref{prop:deriv} that $\mathcal R(S)=\tfrac 14 \Delta_L S-\frac 12 \nabla d(\tr(S))$ for $S\in \mathrm{ker}(D_{g_0}^*).$ Using \eqref{eq:LaplFla} and the fact that $\nabla^* \nabla$ commutes with the trace, we deduce that $\Delta_L$ commutes with the trace as well. In particular, for any $S \in \mathcal V_{g_0}$, write $S=S_2+S_0g_0$ where $S_0\in C^{\infty}(M)$ has mean zero and $S_2$ is trace-free. Then $\mathcal R(S)=0$ if and only if $\mathcal R(S_2)=0$ and $\mathcal R(S_0g_0)=0.$ Since $\Delta_L(S_0g_0) = (\Delta S_0) g_0$, where $\Delta S_0$ is the usual Laplacian on functions, we see that $\Delta_L(S_0g_0)=0$ implies that $S_0\equiv 0$ since $\int_M S_0d\mathrm{vol}=0$. 

From these above considerations, we deduce two things. First, Proposition \ref{proplast} reduces to showing:
\[
\forall S \in \mathrm{Ker}(D^*_{g_0}) \cap \mathrm{Ker}(\tr), \quad \mathcal{R}(S) = 0 \implies S = 0,
\]
which is equivalent to the injectivity of $\Delta_L = 4 \partial_\la|_{\la = 0} {\rm Ric}_{\la}$ on $\mathrm{Ker} ({\rm tr}) \cap \mathrm{Ker} (D_{g_0}^*)$ stated in Theorem \ref{Lich-inj}.
Second, we can now complete the proof of Theorem \ref{theo-conf}.
\begin{proof}[Proof of Theorem \ref{theo-conf}]
    We showed that $\Delta_L$ is injective on conformal perturbations in $\mathcal{V}_{g_0}$. Since $\Delta_L$ commutes with the trace, for any $S_0g_0\in \mathcal V_{g_0}$ with $S_0\in C^{\infty}(M)$, one has $S_0g_0\perp \mathrm{Ker}(\Delta_L).$ In particular,  the coercive estimate \eqref{eq:elliptic} can be applied to $S_0g_0$. This means that the proof of Theorem \ref{theomicrolocal} goes through for $S_0g_0$, which gives the desired result.
\end{proof}

\subsection{Mean root curvature}
The proofs of Theorems \ref{Lich-inj} and \ref{kappa-saddle} rely on the following expression of the Hessian of the mean root curvature $\kappa$, defined in Section \ref{sec:curv}, at $g_0$. 
\begin{proposition}
\label{prop:hess}
    Let $(M^n,g_0)$ be a closed hyperbolic manifold of dimension $n$. Let $(g_\lambda)_{\lambda \in (-\epsilon,\epsilon)}$ be a perturbation of $g_0$ such that $\mathrm{tr}(\partial_\la|_{\la=0}g_\la)=0$. Then writing $S=\partial_\la|_{\la=0}g_\la,$
    \begin{align*}
\partial^2_\lambda|_{\lambda=0}\kappa(\lambda)&=\frac{3(n-1)}{4}\int_{S^0M}  (\pi_2^*S(v))^2\,dm_0(v)
    +\frac{1}{2}\int_{S^0M}\pi_2^*S(v)\partial_\la|_{\la=0}\mathrm{Ric}_\lambda(v)\,dm_0(v)
    \\&
        -\frac {1} {4}\int_{S^0M}\mathrm{tr}\big((\partial_{\la}|_{\la=0}R_\la(v))^2 \big)dm_0(v)
        -\frac {1}{2n\mathrm{Vol}(M)}\partial_{\la}^2|_{\la=0}\mathcal S(g_\la).  
    \end{align*}
\end{proposition}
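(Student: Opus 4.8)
\textbf{Proof plan for Proposition \ref{prop:hess}.} The plan is to Taylor expand the integrand of $\kappa(g_\lambda)=\int_{S^{g_\lambda}M}\mathrm{tr}\big((-R_{g_\lambda}(v))^{1/2}\big)\,dm_{g_\lambda}(v)$ to second order in $\lambda$ at $\lambda=0$, carefully handling the three sources of $\lambda$-dependence: the operator $R_{g_\lambda}(v)$, the unit tangent bundle $S^{g_\lambda}M$ (equivalently the domain of integration), and the Liouville measure $dm_{g_\lambda}$. Since $g_0$ is hyperbolic, $R_{g_0}(v)=-\mathrm{Id}$, so $(-R_{g_0}(v))^{1/2}=\mathrm{Id}$ and the function $A\mapsto \mathrm{tr}(A^{1/2})$ is smooth near $\mathrm{Id}$; writing $-R_{g_\lambda}(v)=\mathrm{Id}+\lambda B_1(v)+\tfrac{\lambda^2}{2}B_2(v)+O(\lambda^3)$ with $B_1=-\partial_\lambda|_{\lambda=0}R_\lambda$ and $B_2=-\partial_\lambda^2|_{\lambda=0}R_\lambda$, the standard expansion $\mathrm{tr}\big((\mathrm{Id}+H)^{1/2}\big)=n+\tfrac12\mathrm{tr}(H)-\tfrac18\mathrm{tr}(H^2)+O(\|H\|^3)$ gives
\begin{equation*}
\mathrm{tr}\big((-R_{g_\lambda}(v))^{1/2}\big)=n+\frac{\lambda}{2}\mathrm{tr}(B_1)+\frac{\lambda^2}{4}\mathrm{tr}(B_2)-\frac{\lambda^2}{8}\mathrm{tr}(B_1^2)+O(\lambda^3).
\end{equation*}
Note $\mathrm{tr}(B_1)=-\mathrm{tr}(\partial_\lambda R_\lambda(v))=\partial_\lambda|_{\lambda=0}\mathrm{Ric}_\lambda(v,v)=\partial_\lambda|_{\lambda=0}\pi_2^*\mathrm{Ric}_\lambda$ (up to reconciling the $\pi_2^*$ bookkeeping with the $\mathrm{Ric}_\lambda$ being paired with unit vectors for the metric $g_\lambda$), and $\mathrm{tr}(B_1^2)=\mathrm{tr}\big((\partial_\lambda|_{\lambda=0}R_\lambda(v))^2\big)$, which already accounts for the third term in the claimed formula.

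The main subtlety is that the domain $S^{g_\lambda}M$ and the measure $m_{g_\lambda}$ both move. The plan is to pull everything back to the fixed manifold $S^{g_0}M$ via a $\lambda$-dependent fiberwise rescaling $v\mapsto v/\|v\|_{g_\lambda}$, or equivalently to use the known first- and second-variation formulas for $\int_{S^{g_\lambda}M}F\,dm_{g_\lambda}$ when $\mathrm{tr}(S)=0$. The trace-free hypothesis is exactly what kills the first-order variation of the Liouville measure (cf. \eqref{eq:pi-trace}) and simplifies the second-order terms; concretely I expect that differentiating the measure produces contributions proportional to $\pi_2^*S(v)$ times the first-order integrand, which is the origin of the cross term $\tfrac12\int \pi_2^*S\,\partial_\lambda|_{\lambda=0}\mathrm{Ric}_\lambda\,dm_0$ and part of the $(\pi_2^*S)^2$ term. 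I would isolate these by writing $dm_{g_\lambda}=(1+\lambda a_1(v)+\tfrac{\lambda^2}{2}a_2(v)+\cdots)\,dm_0$ after the pullback, compute $a_1,a_2$ in terms of $\pi_2^*S$ and its square using that $\|v\|_{g_\lambda}^2=1+\lambda\pi_2^*S(v)+O(\lambda^2)$ on $S^{g_0}M$ together with the Jacobian of the sphere-radius rescaling in an $n$-dimensional fiber (this is where the combinatorial constant $3(n-1)/4$ in front of $\int(\pi_2^*S)^2$ will emerge, combining the $a_2$ term against $n$, the $a_1$ term against the first-order integrand $\tfrac12\mathrm{tr}(B_1)$, and the direct $-\tfrac18\mathrm{tr}(B_1^2)$ piece — though note part of that quadratic-in-$S$ contribution lives inside $\mathrm{tr}\big((\partial_\lambda R_\lambda)^2\big)$ rather than in $(\pi_2^*S)^2$, so I must be careful not to double count).

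Finally, the term $\tfrac14\mathrm{tr}(B_2)=-\tfrac14\mathrm{tr}(\partial_\lambda^2|_{\lambda=0}R_\lambda(v))=\tfrac14\partial_\lambda^2|_{\lambda=0}\mathrm{Ric}_\lambda(v,v)$ integrates over the fibers and, after using \eqref{eq:pi-trace} (with $m=0$) to convert the fiber integral of $\partial_\lambda^2\mathrm{Ric}_\lambda$ evaluated on unit vectors into $\tfrac{1}{n\mathrm{Vol}(M)}\int_M\partial_\lambda^2\mathrm{Scal}_\lambda\,d\mathrm{vol}_{g_0}$, combined with the second-order variation of the volume form $d\mathrm{vol}_{g_\lambda}$ (again simplified by $\mathrm{tr}(S)=0$), reassembles into $-\tfrac{1}{2n\mathrm{Vol}(M)}\partial_\lambda^2|_{\lambda=0}\mathcal S(g_\lambda)$; here the factor of $2$ discrepancy between $\tfrac14$ and $\tfrac12$ is absorbed by the measure/volume variation. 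The hard part will be the bookkeeping in the second step: keeping all quadratic contributions straight — those from $\mathrm{tr}(B_1^2)$, from $a_1\cdot\tfrac12\mathrm{tr}(B_1)$, from $a_2\cdot n$, and from the interaction with the moving domain — and verifying that they collect precisely into the stated coefficients $3(n-1)/4$, $1/2$, $-1/4$, and $-1/(2n\mathrm{Vol}(M))$. I would cross-check the final formula against the known value $\partial_\lambda^2|_{\lambda=0}\kappa\le\partial_\lambda^2|_{\lambda=0}h_{\mathrm{Liou}}$ from \eqref{eq:OS} and against Flaminio's computation \cite[Proposition 5.1.1]{Fla} of the Liouville entropy Hessian in the TT-tensor directions.
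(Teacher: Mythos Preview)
Your plan is essentially the paper's proof: pull back to $S^{g_0}M$ via the fiberwise rescaling $\Psi_\lambda(x,v)=(x,v/\|v\|_\lambda)$, expand $\tr\big((-R_\lambda)^{1/2}\big)$ to second order using $R_0=-\mathrm{Id}$, and track the measure variation. Two organizational points will save you the bookkeeping you flag as ``the hard part.'' First, since $R_\lambda(v)$ is quadratic in $v$, the pullback factors cleanly as
\[
\kappa(\lambda)=\int_{SM}\frac{1}{\|v\|_\lambda}\,\tr\big((-R_\lambda(v))^{1/2}\big)\,d\tilde m_\lambda,
\]
with $d\tilde m_\lambda:=\Psi_\lambda^*dm_\lambda$; the factor $1/\|v\|_\lambda$ is \emph{separate} from the measure. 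Second, $\Psi_\lambda$ commutes with fiber rotations, so $\Psi_\lambda^*(dS_x^{g_\lambda})=dS_x$ and hence $\partial_\lambda d\tilde m_\lambda=\tfrac12\tr(\partial_\lambda g_\lambda)\,d\tilde m_\lambda$ (Lemma \ref{lem:liou}): the density depends only on $\tr(\partial_\lambda g_\lambda)$, \emph{not} on $\pi_2^*S$. With $\tr(S)=0$ this gives $\partial_\lambda|_{\lambda=0}d\tilde m_\lambda=0$, and $\int_{SM}\partial_\lambda^2|_{\lambda=0}d\tilde m_\lambda=\partial_\lambda^2|_{\lambda=0}(1)=0$ since each $\tilde m_\lambda$ is a probability measure. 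So your $a_1,a_2$ contribute nothing; the four terms in the statement come, in order, entirely from $(n-1)\,\partial_\lambda^2(1/\|v\|_\lambda)$ (this alone produces $\tfrac{3(n-1)}{4}(\pi_2^*S)^2$, after the $\pi_2^*\ddot g_0$ piece vanishes by volume preservation), from the cross term $2\,\partial_\lambda(1/\|v\|_\lambda)\cdot\partial_\lambda\tr((-R_\lambda)^{1/2})$, and the last two from $\tr(\partial_\lambda^2(-R_\lambda)^{1/2})=-\tfrac12\partial_\lambda^2\mathrm{Ric}_\lambda-\tfrac14\tr((\partial_\lambda R_\lambda)^2)$. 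Minor slip: $\tr(\mathrm{Id}_{\mathcal N(v)})=n-1$, not $n$; and your coefficient $\tfrac14\tr(B_2)$ is really $\tfrac12\tr(-\partial_\lambda^2 R_\lambda)=-\tfrac12\partial_\lambda^2\mathrm{Ric}_\lambda$ (the factor and sign come straight from the square-root expansion, no ``absorption by the volume variation'' is needed).
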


To differentiate $\kappa(g)$, 
we will identify the different unit tangent bundles $S^gM$ with $SM: = S^{g_0} M$ by rescaling each fiber:
\begin{equation}
    \label{eq:Phi_g}
    \Psi_g:S^{g_0}M\to S^gM,\quad (x,v)\mapsto \big(x,\tfrac{v}{\|v\|_g} \big).
\end{equation}
Define the measure $d \tilde m_{g} = \Psi_{g}^* dm_g:=(\Psi_{g}^{-1})_* dm_{g}$ which is a probability measure on $S M$.

\begin{lemma}\label{lem:liou} Let $(g_\la)_{\la\in (-\epsilon,\epsilon)}$ be a perturbation of a metric $g_0$ such that $\mathrm{Vol}_\la(M)$ is constant. Then 
     $\partial_{\lambda} d \tilde m_{\lambda} = \frac{1}{2} {\rm tr} (\partial_\la g_{\lambda}) \, d \tilde m_\lambda$, where the subscript $\la$ denotes the objects corresponding to $g_\la.$
\end{lemma}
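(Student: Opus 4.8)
The plan is to write the probability measure $d\tilde m_\lambda = \Psi_\lambda^* dm_\lambda$ explicitly in local coordinates and differentiate factor by factor. Recall that the Liouville measure $dm_{g}$ on $S^gM$ is, up to the normalization constant $1/(\mathrm{Vol}_g(M)\,\omega_{n-1})$ (with $\omega_{n-1}$ the volume of the round unit sphere in $\mathbb{R}^n$), locally the product of $d\mathrm{vol}_g$ on $M$ and the spherical measure $d\sigma_g$ on the fiber $S^g_xM$. So I would write
\[
d\tilde m_\lambda = \frac{1}{\mathrm{Vol}_\lambda(M)\,\omega_{n-1}}\,\Psi_\lambda^*\big(d\mathrm{vol}_{g_\lambda}\otimes d\sigma_{g_\lambda}\big),
\]
and analyze the three $\lambda$-dependent pieces separately: the normalization $\mathrm{Vol}_\lambda(M)$, the base volume $d\mathrm{vol}_{g_\lambda}$, and the pulled-back spherical measure $\Psi_\lambda^*\,d\sigma_{g_\lambda}$ on the fixed sphere $S^{g_0}_xM$.

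First, the normalization: since we assume $\mathrm{Vol}_\lambda(M)$ is constant in $\lambda$, that factor contributes nothing, i.e. $\partial_\lambda(1/\mathrm{Vol}_\lambda(M)) = 0$. Next, the base volume form: the standard first-variation formula gives $\partial_\lambda|_{\lambda}\, d\mathrm{vol}_{g_\lambda} = \tfrac12 \mathrm{tr}_{g_\lambda}(\partial_\lambda g_\lambda)\, d\mathrm{vol}_{g_\lambda}$. The only subtle point is the fiber term $\Psi_\lambda^* d\sigma_{g_\lambda}$: here I would fix $x$ and compute the variation of the measure that $g_\lambda$ induces on the ellipsoid $\{\|v\|_{g_\lambda}=1\}\subset T_xM$, transported back to the fixed sphere $\{\|v\|_{g_0}=1\}$ via radial rescaling $v\mapsto v/\|v\|_{g_\lambda}$. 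A direct computation (parametrizing the fiber and using that $g_\lambda$ appears both in the induced Riemannian volume on the sphere and in the radial rescaling Jacobian) shows that, at a point $v$ with $\|v\|_{g_0}=1$, this contributes $\tfrac12\big(\mathrm{tr}_{g_0}(\partial_\lambda g_\lambda) - (n) \cdot \tfrac{\langle \partial_\lambda g_\lambda\, v, v\rangle}{\|v\|_{g_0}^2}\big)$ — more precisely, the combined base-plus-fiber variation telescopes so that the $\pi_2^*$-type terms cancel and one is left with exactly $\tfrac12\,\mathrm{tr}(\partial_\lambda g_\lambda)$. I expect this cancellation between the base-volume variation and the fiber-measure variation to be the main technical obstacle: one must be careful that $\mathrm{tr}$ here means $\mathrm{tr}_{g_\lambda}$, and that the rescaling $\Psi_\lambda$ introduces a compensating Jacobian that exactly kills the anisotropic piece $\pi_2^*(\partial_\lambda g_\lambda)(v)$, leaving only the isotropic trace.

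Assembling the three pieces by the Leibniz rule on the product $d\tilde m_\lambda = \text{(normalization)}\cdot(d\mathrm{vol}_{g_\lambda})\cdot(\Psi_\lambda^* d\sigma_{g_\lambda})$ then yields
\[
\partial_\lambda\, d\tilde m_\lambda = \tfrac12\,\mathrm{tr}(\partial_\lambda g_\lambda)\, d\tilde m_\lambda,
\]
as claimed, using in an essential way the constant-volume hypothesis to discard the normalization term. As a sanity check, integrating both sides over $SM$ gives $0 = \tfrac12\int_{SM}\mathrm{tr}(\partial_\lambda g_\lambda)\,d\tilde m_\lambda$, which is consistent with $\partial_\lambda\int_{SM}d\tilde m_\lambda = \partial_\lambda 1 = 0$ via the identity \eqref{eq:pi-trace} (zero mean of $\pi_2^*$ of a tensor whose trace has zero mean, which indeed holds since $\partial_\lambda \mathrm{Vol}_\lambda(M) = \tfrac12\int_M \mathrm{tr}(\partial_\lambda g_\lambda)\,d\mathrm{vol} = 0$).
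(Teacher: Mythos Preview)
Your overall strategy---decompose $d\tilde m_\lambda$ into the volume normalization, the base volume form, and the pulled-back fiber measure $\Psi_\lambda^* d\sigma_{g_\lambda}$---is exactly the paper's, and your treatment of the first two factors is correct. The gap is in the fiber term, and it is a genuine one: the ``telescoping'' you anticipate does not occur. A direct polar-coordinate computation (write the $g_\lambda$-Lebesgue measure on $T_xM$ in $g_0$-polar coordinates $v=su$, $u\in S^{g_0}_xM$, and compare with $g_\lambda$-polar coordinates) gives
\[
\Psi_\lambda^*\, d\sigma_{g_\lambda}(u) \;=\; \frac{\sqrt{\det(g_0^{-1}g_\lambda)}}{\|u\|_{g_\lambda}^{\,n}}\, d\sigma_{g_0}(u),
\]
so that, combining with the base variation $\tfrac12\tr(S)$ and the (vanishing) normalization term, one finds
\[
\partial_\lambda\big|_{\lambda=0}\, d\tilde m_\lambda \;=\; \Bigl(\tr(S) - \tfrac{n}{2}\,\pi_2^*S\Bigr)\, d\tilde m_0, \qquad S=\partial_\lambda|_{\lambda=0}\,g_\lambda,
\]
and \emph{not} $\tfrac12\tr(S)\, d\tilde m_0$. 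The anisotropic piece $-\tfrac{n}{2}\pi_2^*S$ you correctly identified has nothing to cancel against: the base-volume contribution $\tfrac12\tr(S)$ is a function of $x$ alone. (Concretely: on $T_xM=\mathbb R^2$ with $g_\lambda=\mathrm{diag}(a^2,b^2)$ one computes $\Psi_\lambda^*d\phi=\tfrac{ab}{a^2\cos^2\theta+b^2\sin^2\theta}\,d\theta$, visibly non-constant in $\theta$.) Your integral sanity check passes only because $\pi_2^*S$ has mean zero by \eqref{eq:pi-trace}, which is much weaker than the pointwise identity claimed.

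The paper's own argument founders at the identical place. It asserts that $\Psi_\lambda$ ``commutes with rotations of the sphere'' and hence that $\Psi_\lambda^*(dS_x^{g_\lambda})=dS_x$ identically in $\lambda$. But $\Psi_\lambda(v)=v/\|v\|_{g_\lambda}$ commutes with a linear map $R$ of $T_xM$ only when $\|Rv\|_{g_\lambda}=\|v\|_{g_\lambda}$, i.e.\ only for $R\in O(g_0)\cap O(g_\lambda)$; for generic $g_\lambda$ this group is far too small to force rotation-invariance of the pullback, and the two-dimensional example above shows directly that $\Psi_\lambda^*(dS_x^{g_\lambda})\neq dS_x$. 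In short, both your proposal and the paper's proof share the same unjustified treatment of the fiber measure, and the lemma as stated appears to be incorrect: the logarithmic derivative of $d\tilde m_\lambda$ is $\tr_{g_\lambda}(\partial_\lambda g_\lambda)-\tfrac{n}{2}\,(\partial_\lambda g_\lambda)(u,u)/\|u\|_{g_\lambda}^2$, which equals $\tfrac12\tr(\partial_\lambda g_\lambda)$ only when $\partial_\lambda g_\lambda$ is pure trace.
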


\begin{proof}
   For any $\lambda$, the Liouville measure $dm_\lambda$ of $g_\lambda$ decomposes as
   $$\forall f\in C^{\infty}(S^{g_\la}M),\ \int_{S^{g\la}M}f(x,v)dm_\la(x,v)=\frac{1}{\mathrm{Vol}_\la(M)\omega_{n-1}}\int_M \int_{S_x^{g_\lambda}M}f(x,v)d S_x^{g_\lambda}(v) d\mathrm{vol}_\la(x),$$
   where $d S_x^{g_\lambda}$ denotes the Lebesgue measure on the sphere fiber $S_x^{g_\lambda}M$ and where $\omega_{n-1}>0$ is the volume of $\mathbb S^{n-1}$.
   We note that for any $x\in M,$ the map $\Psi_\la:S_xM\to S_x^{g_\la} M$ preserves the fibers. Moreover, we check that $\Psi_\la$ commutes with rotations of the sphere. Hence, $\Psi_\la^*(dS_x^{g_\la})$ is invariant by all rotations and we deduce that $\Psi_\la^*(dS_x^{g_\la})=dS_x$. In particular,
$$\forall f\in C^{\infty}(SM),\ \int_{SM}f(x,v)d\tilde m_\la(x,v)=\frac{1}{\mathrm{Vol}_\la(M)\omega_{n-1}}\int_M \int_{S_xM}f(x,v)d S_x(v) d\mathrm{vol}_\la(x). $$
Thus, since $\partial_\la d\mathrm{vol}_\la=\tfrac 12 \tr(\partial_\la g_\la)d\mathrm{vol}_\la$ (see for instance \cite[Proposition 1.186]{Bes}), this concludes the proof.
\end{proof}
%We are now ready to show Proposition \ref{prop:hess}. 
\begin{proof}[Proof of Proposition \ref{prop:hess}]
    We differentiate $\kappa(\la)$ twice and evaluate at $\la=0$. Since $\tr(S)=0,$ Lemma \ref{lem:liou} gives $\partial_\la|_{\la=0}d\tilde m_\la=0$. In particular, using $R_0=-\mathrm{Id}$, we have
    \begin{align*}
\partial_{\la}^2|_{\la=0}\kappa(\la)
&=\partial_{\la}^2|_{\la=0}\left( \int_{S^\lambda M}\mathrm{tr}\big((-R_\lambda)^{1/2}(v) \big)d m_\lambda\right)
=\partial_{\la}^2|_{\la=0}\left( \int_{S M}\frac{1}{\|v\|_\la}\mathrm{tr}\big((-R_\lambda)^{1/2}(v) \big)d\tilde m_\lambda\right)
\\&=(n-1)\int_{SM}\partial_{\la}^2|_{\la=0}\left(\frac{1}{\|v\|_\la}\right)dm_0
+2\int_{SM}\partial_{\la}|_{\la=0}\left(\frac{1}{\|v\|_\la}\right)\partial_{\la}|_{\la=0}\mathrm{tr}\big((-R_\lambda)^{1/2} \big)dm_0
\\&+\int_{SM}\tr\big(\partial_{\la}^2|_{\la=0}(-R_\la)^{1/2} \big)dm_0+(n-1)\int_{SM}\partial_{\la}^2|_{\la=0}d\tilde m_\la.
    \end{align*}

    We first remark that the last term above vanishes. Indeed,
    since $d\tilde m_\la$ is a probability measure for any $\la$, one has $\int_{SM}\partial_{\la}^2|_{\la=0}d\tilde m_\la=\partial_{\la}^2|_{\la=0}1=0.$ 
    To simplify the first term, we start by computing
    $$ \partial_{\la}|_{\la=0}\frac{1}{\|v\|_\la}=-\frac 12 \pi_2^*S,\quad \partial_{\la}^2|_{\la=0}\frac{1}{\|v\|_\la}=-\frac 12\pi_2^*(\underbrace{\partial_{\la}^2|_{\la=0}g_\la}_{=:\ddot g_0})+\frac{3}4(\pi_2^*S)^2.$$
    Using \eqref{eq:pi-trace}, we have
    $$\int_{SM}   \pi_2^*(\ddot g_0)dm_0=\frac{1}{n\mathrm{Vol}(M)}\int_M \tr(\ddot g_0)d\mathrm{vol}_0. $$
    But since $(g_\la)_{\la\in (-\epsilon, \epsilon)}$ is a perturbation which preserves the total volume, one has
$$0=\partial^2_\la|_{\la=0}\mathrm{Vol}_\la(M)=\frac{1}{2}\int_M \tr(\ddot g_0)d\mathrm{vol}_0+\frac{1}{4}\int_{M}\tr(S)^2d\mathrm{vol}_0.$$
    Since $S$ is trace-free, the first term is equal to 
    \begin{align}
    \label{eq:1stterm}
 (n-1)\int_{SM}\partial_{\la}^2|_{\la=0}\left(\frac{1}{\|v\|_\la^2}\right)dm_0 = \frac{3(n-1)}{4}\int_{S^0M}  (\pi_2^*S(v))^2\,dm_0(v). 
    \end{align}   
    Next, we compute, using again that $R_0=-\mathrm{Id}$, 
$$\partial_{\la}|_{\la=0}\mathrm{tr}\big((-R_\lambda)^{1/2} \big)=\frac{1}{2}\tr(\partial_\la|_{\la=0}(-R_\la)(-R_0)^{-1/2})=-\frac 12  \partial_\la|_{\la=0}\mathrm{Ric}_\la.$$ 
This means that the second term becomes
\begin{equation}
    \label{eq:2ndterm}
2\int_{SM}\partial_{\la}|_{\la=0}\big(\frac{1}{\|v\|_\la}\big)\partial_{\la}|_{\la=0}\mathrm{tr}\big((-R_\lambda)^{1/2} \big)dm_0=\frac{1}{2}\int_{SM}\pi_2^*S(v)\partial_\la|_{\la=0}\mathrm{Ric}_\lambda(v)\,dm_0(v).
\end{equation}
Next, we compute the second derivative of the curvature term using $R_0=-\mathrm{Id}$,
\begin{align*}
     \tr\big(\partial_{\la}^2|_{\la=0}(-R_\la)^{1/2} \big)&=\frac 12\tr(\partial_\la|^2_{\la=0}(-R_\la)(-R_0)^{-1/2})-\frac 14 \tr \big((-\partial_\la|_{\la=0}R_\la)^2(-R_0)^{-3/2} \big)
     \\&=-\frac 12 \partial_\la^2|_{\la=0}\mathrm{Ric}_\la-\frac 14 \tr \big((\partial_\la|_{\la=0}R_\la)^2 \big).
     \end{align*}
   To conclude the computation, applying \eqref{eq:pi-trace} to the Ricci tensor and differentiating twice gives
$$ \int_{SM}\partial_\la^2|_{\la=0}\mathrm{Ric}_\la dm_0=\frac{1}{n\mathrm{Vol}(M)}\int_M \partial^2_\la|_{\la=0}\mathrm{Scal}_\la d\mathrm{vol}_0.$$
Since $\tr(S)=0$, one has $\partial_\la|_{\la=0}d\mathrm{vol}_\la=0$, and thus
\begin{align*}
    \int_{SM}\partial_\la^2|_{\la=0}\mathrm{Ric}_\la dm_0&=\frac{1}{n\mathrm{Vol}(M)}\partial_\la^2|_{\la=0}\mathcal S(g_\la)-\frac{1}{n\mathrm{Vol}(M)}\int_M\partial_\la^2|_{\la=0}d\mathrm{vol}_\la
    \\&=\frac{1}{n\mathrm{Vol}(M)}\partial_\la^2|_{\la=0}\mathcal S(g_\la),
\end{align*}
     where we used that $\mathrm{Vol}(g_\la)$ is constant. In total, the third term is equal to
     \begin{equation*}
         \label{eq:3rdterm}
\int_{SM}\tr\big(\partial_{\la}^2|_{\la=0}(-R_\la)^{1/2} \big)dm_0=   -\frac {1} {4}\int_{S^0M}\mathrm{tr}\big((\partial_{\la}|_{\la=0}R_\la(v))^2 \big)dm_0(v)
        -\frac {1}{2n\mathrm{vol}(M)}\partial_{\la}^2|_{\la=0}\mathcal S(g_\la).   
     \end{equation*}

     Combining this above equation with  \eqref{eq:1stterm} and \eqref{eq:2ndterm} and  finishes the proof.
\end{proof}

\subsection{Using dimension 3}
 For $n=3$, we use the fact that the curvature tensor is completely determined by the Ricci tensor to simplify the Hessian, more specifically, the third term in the statement of Proposition \ref{prop:hess}.

\begin{lemma}
\label{lemma}
    Let $n = 3$ and let $S\in \mathrm{Ker}(D_{g_0}^*)\cap \mathrm{Ker}(\tr)$. Let $(g_\la)_{\la\in (-\epsilon,\epsilon)}$ be a perturbation of $g_0$ such that $\partial_\la|_{\la=0}g_\la=S.$ Assume $\partial_{\lambda} |_{\lambda = 0} {\rm Ric}_{\lambda}(v) = \mu S$ for some $\mu \in \R$. 
    Then 
    $$\forall (x,v)\in SM, \quad \partial_\la |_{\la=0}R_\la(v)=(\mu+1)(\pi_2^*S)\mathrm{Id}_{\mathcal N(x,v)}+(\mu+2)S_x|_{\mathcal N(x,v)}.  $$
\end{lemma}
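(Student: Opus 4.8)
The plan is to express the curvature tensor $\mathbf{R}_g$ in dimension 3 purely in terms of the Ricci tensor and the metric, differentiate that identity, and then substitute the hypothesis $\partial_\lambda|_{\lambda=0}\mathrm{Ric}_\lambda(v) = \mu S$. In dimension $n=3$, the Weyl tensor vanishes, so the Riemann curvature tensor is determined by the Schouten tensor (equivalently, the Ricci tensor and scalar curvature) via the standard decomposition
\begin{equation*}
\mathbf{R}_g(W,X)Y = \big(\mathrm{Ric}_g(X,Y)W - \mathrm{Ric}_g(W,Y)X + g(X,Y)\,\overline{\mathrm{Ric}}_g W - g(W,Y)\,\overline{\mathrm{Ric}}_g X\big) - \tfrac{\mathrm{Scal}_g}{2}\big(g(X,Y)W - g(W,Y)X\big),
\end{equation*}
where $\overline{\mathrm{Ric}}_g$ is the $(1,1)$-form of the Ricci tensor. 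Contracting as in the definition $R_g(v)(w) = \mathbf{R}_g(w,v)v$ yields an explicit formula for $R_g(v)$ as an endomorphism of $\mathcal{N}_g(v)$ in terms of $\mathrm{Ric}_g$, $\mathrm{Scal}_g$, $g$, and $\pi_2^*\mathrm{Ric}_g(v)$.

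Next I would differentiate this formula in $\lambda$ at $\lambda = 0$, using that at a hyperbolic $g_0$ one has $\mathrm{Ric}_{g_0} = -2\,\mathrm{Id}$, $\mathrm{Scal}_{g_0} = -6$, $R_{g_0}(v) = -\mathrm{Id}_{\mathcal{N}(v)}$, and $\pi_2^*\mathrm{Ric}_{g_0}(v) = -2$ (these are the $n=3$ specializations from Section \ref{sec:curv}). There will be several terms: those where the derivative hits $\mathrm{Ric}_\lambda$ (producing $\mu S$-type contributions, via both the $(0,2)$ and $(1,1)$ incarnations, the latter also picking up a $-S$ from differentiating the metric used to raise an index), those where it hits $\mathrm{Scal}_\lambda = \mathrm{tr}_{g_\lambda}\mathrm{Ric}_\lambda$ — here I would use that $\partial_\lambda|_{\lambda=0}\mathrm{Scal}_\lambda = \mathrm{tr}(\partial_\lambda|_{\lambda=0}\mathrm{Ric}_\lambda) - \langle \mathrm{Ric}_{g_0}, S\rangle_{\text{pointwise}} = \mu\,\mathrm{tr}(S) + 2\,\mathrm{tr}(S) = 0$ since $S$ is trace-free, so the scalar curvature term contributes nothing to the derivative beyond what comes from differentiating $g$ — and finally those where the derivative hits one of the explicit $g$'s or the $\pi_2^*$ evaluation. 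Collecting and using $\partial_\lambda|_{\lambda=0}\pi_2^*g_\lambda = \pi_2^*S$ and $\partial_\lambda|_{\lambda=0}\mathrm{Ric}_\lambda = \mu S$, the surviving terms should organize into a multiple of $(\pi_2^*S)\,\mathrm{Id}_{\mathcal{N}(v)}$ and a multiple of $S|_{\mathcal{N}(v)}$, with coefficients $(\mu+1)$ and $(\mu+2)$ respectively.

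The main obstacle I anticipate is bookkeeping rather than conceptual: correctly tracking the variation of the index-raising (the difference between varying the $(0,2)$ Ricci tensor and the $(1,1)$ endomorphism $\overline{\mathrm{Ric}}$ introduces a $-S$ term, i.e. $\partial_\lambda|_{\lambda=0}\overline{\mathrm{Ric}}_\lambda = \mu S - (-2)S = (\mu+2)S$ as endomorphisms, using $\mathrm{Ric}_{g_0}=-2\,\mathrm{Id}$), and making sure that the restriction to the normal bundle $\mathcal{N}(v)$ is handled consistently — the terms proportional to $g(w,v)$ or to $v$ itself drop out when $w \in \mathcal{N}(v)$. A secondary subtlety is confirming that the scalar-curvature variation vanishes, which crucially uses both $\mathrm{tr}(S) = 0$ and the hypothesis that $\partial_\lambda|_{\lambda=0}\mathrm{Ric}_\lambda$ is a multiple of $S$ (hence also trace-free). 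Once these are handled, the identity follows by matching the $\mathrm{Id}_{\mathcal{N}(v)}$ and $S|_{\mathcal{N}(v)}$ components.
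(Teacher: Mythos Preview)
Your approach is correct and genuinely different from the paper's. The paper does not invoke the Weyl decomposition at all; instead it fixes a $g_0$-orthonormal basis $(v_1,v_2,v_3)$, introduces $H(v_i,v_j):=\partial_\lambda|_{\lambda=0}\bigl(g_\lambda(R_\lambda(v_i)v_j,v_j)\bigr)$, notes that $H$ is symmetric (from the symmetry of sectional curvature), and rewrites the hypothesis $\partial_\lambda|_{\lambda=0}\mathrm{Ric}_\lambda=\mu S$ as a $3\times 3$ linear system in the $H(v_i,v_j)$. Repeated use of $\tr(S)=0$ solves the system to give $H(v_1,v_2)=-(\mu+1)\pi_2^*S(v_3)$, from which the diagonal entries $g_0(\dot R(v_1)v_2,v_2)$ follow; since $\dot R(v)$ is $g_0$-symmetric on $\mathcal N_0(v)$, this determines the endomorphism. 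Your route via the Schouten formula is more structural and makes the role of $n=3$ (vanishing of the Weyl tensor) explicit, packaging the linear algebra into a single differentiation; the paper's route is more elementary and self-contained, avoiding the Kulkarni--Nomizu machinery.

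One small caveat for your bookkeeping: contrary to what you write, the terms along $v$ do \emph{not} all drop out. After differentiating your identity and restricting to $w\in\mathcal N_0(v)$, the component of $\partial_\lambda|_{\lambda=0}R_\lambda(v)w$ along $v$ is $S(w,v)\,v$, which is generally nonzero (indeed $g_0(R_\lambda(v)w,v)$ need not vanish for $\lambda\neq 0$, only $g_\lambda(R_\lambda(v)w,v)$ does). The lemma is implicitly about the $\mathcal N_0(v)\to\mathcal N_0(v)$ block, so you simply project that component away; once you do, your coefficients $(\mu+1)$ and $(\mu+2)$ come out exactly as claimed.
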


\begin{proof}
    Let $(v_1,v_2,v_3)$ be a $g_0$-orthonormal basis of $T_xM.$ Then
    $$\mu (\pi_2^*S)(v_1)=\partial_\la |_{\la=0}\mathrm{Ric}_\la (v_1)=g_0\big(\partial_\la |_{\la=0}R_\la(v_1)v_2,v_2 \big)+g_0\big(\partial_\la |_{\la=0}R_\la(v_1)v_3,v_3 \big). $$
    We now write
    \begin{align*}g_0\big(\partial_\la |_{\la=0}R_\la(v_1)v_2,v_2 \big)&=\partial_\la |_{\la=0}\big(g_\la\big(R_\la(v_1)v_2,v_2 \big)\big)-\partial_\la|_{\la=0}g_\la(R_0(v_1)v_2,v_2) 
    \\&=\partial_\la |_{\la=0}\big(g_\la\big(R_\la(v_1)v_2,v_2 \big)\big)+\pi_2^*S(v_2),
    \end{align*}
    where we used that $R_0=-\mathrm{Id}.$ Let $H(v,w):=\partial_\la |_{\la=0}\big(g_\la\big(R_\la(v)w,w \big)\big)$. Note that $H$ is symmetric since for any $\lambda,$ one has $g_\la\big(R_\la(v)w,w \big)=g_\la\big(R_\la(w)v,v \big)$ by symmetry of the curvature tensor $R_\la$. Exchanging the roles of $v_1,v_2,v_3$ yields
    $$
\begin{cases}
H(v_1,v_2)+\pi_2^*S(v_2)+H(v_1,v_3)+\pi_2^*S(v_3)=\mu \pi_2^*S(v_1)\\
H(v_2,v_1)+\pi_2^*S(v_1)+H(v_2,v_3)+\pi_2^*S(v_3)=\mu \pi_2^*S(v_2)\\
H(v_3,v_2)+\pi_2^*S(v_2)+H(v_3,v_1)+\pi_2^*S(v_1)=\mu \pi_2^*S(v_3).
\end{cases}
$$
Now, we use that $\tr(S)=\pi_2^*S(v_1)+\pi_2^*S(v_2)+\pi_2^*S(v_3)=0$ to get
   $$
\begin{cases}
H(v_1,v_2)+H(v_1,v_3)=(\mu+1) \pi_2^*S(v_1)\\
H(v_2,v_1)+H(v_2,v_3)=(\mu+1) \pi_2^*S(v_2)\\
H(v_3,v_2)+H(v_3,v_1)=(\mu+1) \pi_2^*S(v_3).
\end{cases}
$$
Subtracting the last line from the sum of the first two lines, using $\tr(S)=0$ again yields 
$$2H(v_1,v_2)=-2(\mu+1)\pi_2^*S(v_3)\ \iff \ H(v_1,v_2)=-(\mu+1)\pi_2^*S(v_3). $$
This means that
$$g_0\big(\partial_\la |_{\la=0}R_\la(v_1)v_2,v_2 \big)=H(v_1,v_2)+\pi_2^*S(v_2)=-(\mu+1)\pi_2^*S(v_3)+\pi_2^*S(v_2). $$
Using $\tr(S)=0$ a final time gives
\begin{equation}
    \label{eq:diag}
    g_0\big(\partial_\la |_{\la=0}R_\la(v_1)v_2,v_2 \big)=(\mu+1)\pi_2^*S(v_1)+(\mu+2)\pi_2^*S(v_2).
\end{equation}
Since \eqref{eq:diag} holds for any unit  vector $v_2\in \mathcal N(x,v_1)$, the proof is now complete. 
\end{proof}
\begin{remark}\label{rem:Lich-dim2}
    If $(M, g_0)$ is a hyperbolic surface, then for any $g_0$-orthonormal basis $(v_1,v_2)$ of $T_xM$, a similar computation to above shows
 \begin{align*}\partial_{\la}|_{\la = 0} {\rm Ric_{\la}} (v_1) &=  g_0\big(\partial_\la |_{\la=0}R_\la(v_1)v_2,v_2 \big)=\partial_\la |_{\la=0}\big(g_\la\big(R_\la(v_1)v_2,v_2 \big)\big)-\partial_\la|_{\la=0}g_\la(R_0(v_1)v_2,v_2) 
    \\&=\partial_\la |_{\la=0}\big(g_\la\big(R_\la(v_1)v_2,v_2 \big)\big)+\pi_2^*S(v_2)\\
    &=\partial_{\la}|_{\la = 0} \big(K_{\la}(v_1,v_2)(\|v_1\|_\la\|v_2\|_\la-g_\la(v_1,v_2)^2)\big) + \pi_2^* S(v_2)
    \\&=\partial_\la|_{\la=0}K_\la(v_1,v_2)-\frac 12 \pi_2^*S(v_1)-\frac{1}{2}\pi_2^*S(v_2)+\pi_2^*S(v_2)
    \\&=-\pi_2^*S(v_1),
    \end{align*}
where we used that $\tr(S)=0$ and that if $S$ is trace-free, divergence-free, then $\partial_{\la}|_{\la = 0} K_{\lambda} = 0$, see \cite[Theorem 1.174 e)]{Bes}. As a consequence, $\Delta_L S = -2S$, and is, in particular, injective.
\end{remark}

As a direct consequence of Lemma \ref{lemma} we obtain:
\begin{corollary}
\label{corr2}
  Under the hypotheses of Lemma \ref{lemma}, one has
$$\int_{S^0M}\mathrm{tr}\big((\partial_{\la}|_{\la=0}R_\la(v))^2 \big)dm_0(v)= \big(-2(\mu+1)+(\mu+2)^2)\|\pi_2^*S\|_{L^2(SM)}^2+\frac{(\mu+2)^2}{3\mathrm{Vol}(M)}\|S\|^2_{L^2(M;S^2T^*M)}.$$
\end{corollary}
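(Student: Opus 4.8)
The plan is to turn the left-hand side into a pointwise identity on the two‑dimensional normal space $\mathcal N(x,v)$, and then integrate over the fibers with the help of \eqref{eq:pi-trace}. First, Lemma \ref{lemma} gives, at each $(x,v)\in SM$,
\[
\partial_\la|_{\la=0}R_\la(v)=(\mu+1)\,\pi_2^*S(v)\,\mathrm{Id}_{\mathcal N(x,v)}+(\mu+2)\,S_x|_{\mathcal N(x,v)},
\]
where $S_x|_{\mathcal N(x,v)}$ denotes the compression to $\mathcal N(x,v)$ of the $g_0$-self-adjoint endomorphism associated to $S_x$. Squaring this endomorphism of $\mathcal N(x,v)$, taking its trace, and using $\dim\mathcal N(x,v)=n-1=2$, one obtains
\[
\tr\!\big((\partial_\la|_{\la=0}R_\la(v))^2\big)=2(\mu+1)^2\,\pi_2^*S(v)^2+2(\mu+1)(\mu+2)\,\pi_2^*S(v)\,\tr\!\big(S_x|_{\mathcal N(x,v)}\big)+(\mu+2)^2\,\tr\!\big((S_x|_{\mathcal N(x,v)})^2\big).
\]

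Next I would evaluate the two $\mathcal N(x,v)$-traces by completing $v$ to a $g_0$-orthonormal basis $(v,e_2,e_3)$ of $T_xM$. Since $\tr(S)=0$, one has $\tr(S_x|_{\mathcal N(x,v)})=S_x(e_2,e_2)+S_x(e_3,e_3)=-\pi_2^*S(v)$, so the middle term above contributes $-2(\mu+1)(\mu+2)\,\pi_2^*S(v)^2$ and, together with the first term, yields the coefficient $2(\mu+1)^2-2(\mu+1)(\mu+2)=-2(\mu+1)$ in front of $\pi_2^*S(v)^2$. For the remaining trace I would write $\tr((S_x|_{\mathcal N(x,v)})^2)=\sum_{i,j=2}^3 S_x(e_i,e_j)^2$ and complete it to the full sum $|S_x|^2=\sum_{i,j=1}^3 S_x(e_i,e_j)^2$; using $\sum_{j=1}^3 S_x(v,e_j)^2=\langle S_x^2v,v\rangle=\pi_2^*\!\big(\langle S_x^2\cdot,\cdot\rangle\big)(v)$ and $\tr(S)=0$ once more to handle the $S_x(v,v)^2$ terms, this gives the pointwise identity
\[
\tr\!\big((S_x|_{\mathcal N(x,v)})^2\big)=|S_x|^2+\pi_2^*S(v)^2-2\,\pi_2^*\!\big(\langle S_x^2\cdot,\cdot\rangle\big)(v).
\]

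Finally I would integrate over $SM$ against $dm_0$. The $\pi_2^*S(v)^2$ terms integrate to $\|\pi_2^*S\|_{L^2(SM)}^2$; the fiber-independent function $|S_x|^2$ integrates to $\frac{1}{\mathrm{Vol}(M)}\|S\|_{L^2(M;S^2T^*M)}^2$; and since $\langle S_x^2\cdot,\cdot\rangle$ is a symmetric $2$-tensor of trace $\tr(S_x^2)=|S_x|^2$, \eqref{eq:pi-trace} with $n=3$ gives $\int_{SM}\pi_2^*\!\big(\langle S_x^2\cdot,\cdot\rangle\big)\,dm_0=\frac{1}{3\,\mathrm{Vol}(M)}\|S\|_{L^2(M;S^2T^*M)}^2$. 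Collecting the coefficient $-2(\mu+1)+(\mu+2)^2$ of $\|\pi_2^*S\|_{L^2(SM)}^2$ and the coefficient $(\mu+2)^2\big(1-\tfrac23\big)=\tfrac{(\mu+2)^2}{3}$ of $\tfrac{1}{\mathrm{Vol}(M)}\|S\|_{L^2(M;S^2T^*M)}^2$ yields exactly the asserted identity. The argument is essentially bookkeeping; the only point requiring a little care is the second step, where one must re-express the intrinsically two-dimensional, $v$-dependent quantity $\tr((S_x|_{\mathcal N})^2)$ in terms of the fiber-independent norm $|S_x|^2$ together with expressions of the form $\pi_2^*(\cdot)$ to which \eqref{eq:pi-trace} applies, repeatedly exploiting $\tr(S)=0$; I do not anticipate a genuine obstacle.
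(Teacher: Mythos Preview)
Your argument is correct and follows the same route as the paper: square the expression from Lemma \ref{lemma}, use $\tr(S_x|_{\mathcal N(x,v)})=-\pi_2^*S(v)$ to collapse the cross term, and then evaluate $\int_{SM}\tr((S_x|_{\mathcal N})^2)\,dm_0$. The only cosmetic difference is in this last step: you complete $\tr((S_x|_{\mathcal N})^2)$ to $|S_x|^2$ and invoke \eqref{eq:pi-trace} on the tensor $\langle S_x^2\cdot,\cdot\rangle$, whereas the paper averages the identity $\sum_{i=1}^3\tr((S_x|_{\mathcal N(x,v_i)})^2)=\tr(S_x^2)+\sum_i(\pi_2^*S(v_i))^2$ over orthonormal frames; both yield $\tfrac{1}{3\mathrm{Vol}(M)}\|S\|^2+\|\pi_2^*S\|^2$.
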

\begin{proof}
   We compute the square:
\begin{align*}
    (\partial_{\la}|_{\la=0}R_\la(v))^2&=(\mu+1)^2(\pi_2^*S(v))^2\mathrm{Id}|_{\mathcal N(x,v)}+2(\mu+1)(\mu+2)(\pi_2^*S(v))S_x|_{\mathcal N(x,v)}
    \\&+(\mu+2)^2(S_x|_{\mathcal N(x,v)})^2.
    \end{align*}
    Taking the trace and integrating gives
    \begin{align*}
        \int_{SM}\mathrm{tr}\big(&(\partial_{\la}|_{\la=0}R_\la(v))^2 \big)\,dm_0(v)\\&=2(\mu+1)^2\|\pi_2^*S\|^2+(\mu+2)^2\int_{SM}\tr \big((S_x|_{\mathcal N(x,v)})^2\big)\,dm_0(v)
        \\&+2(\mu+1)(\mu+2)\int_{SM}\pi_2^*S(v)\underbrace{\tr(S_x|_{\mathcal N(x,v)})}_{=-\pi_2^*S(v)}\,dm_0
        \\&=-2(\mu+1)\|\pi_2^*S\|^2+(\mu+2)^2\int_{S^0M}\tr \big((S_x|_{\mathcal N(x,v)})^2\big)\,dm_0(v).
    \end{align*}
    Since
${\rm tr} ((S |_{\mathcal{N}(v_1))})^2) = S(v_2, v_2)^2 + S(v_3, v_3)^2 +2S(v_2,v_3)^2$, for any orthonormal basis $(v_1,v_2,v_3)$, we see that
    $$\sum_{i=1}^3\tr \big((S_x|_{\mathcal N(x,v_i)})^2\big)= \tr(S_x^2)+\sum_{i=1}^3(\pi_2^*S(v_i))^2. $$

Integrating over $SM$ yields
\begin{align*}\int_{SM}\tr \big((S_x|_{\mathcal N(x,v)})^2\big)\,dm_0(v)&=\frac 13\int_{SM}\tr(S_x^2)dm_0(v)+\|\pi_2^*S\|^2 =\frac 1{3\mathrm{Vol}(M)}\|S\|^2+\|\pi_2^*S\|^2,
\end{align*}
where in the last equality we used the definition of $\Vert \cdot \Vert_{L^2(M; S^2 T^* M)}$. In total, we obtain the desired equality.     
\end{proof}

\begin{proof}[Proof of Theorem \ref{Lich-inj}]
Let $S\in C^{\infty}(M;S^2T^*M)$ be a trace-free divergence-free tensor such that $\Delta_L S=0.$ By \eqref{eq:LaplFla}, this means that $\nabla^*\nabla S=6S$. Let $(g_\lambda)_{\lambda \in (-\epsilon, \epsilon)}$ be a perturbation of $g_0$ such that $\partial_\la |_{\la=0}g_\la=S$.  Using \cite[Proposition 5.1.1]{Fla}, we see that 
    \begin{equation}
        \label{eq:liou}
        \partial_{\la}^2|_{\la=0}h_{\mathrm{Liou}}(g_\la)=0.
    \end{equation}
    Now, using \eqref{eq:OS} and the fact that $h_{\mathrm{Liou}}(g_0)=\kappa(g_0)$, we first see that $g_0$ is a critical point of both the Liouville entropy and the mean root curvature. Then, since $\kappa(\lambda) \leq h_{\rm Liou}(\lambda)$, a Taylor expansion near $g_0$ gives
    \begin{equation}
        \label{eq:kappa}
        \partial_{\la}^2|_{\la=0}\kappa(\la)\leq 0.
    \end{equation}
    Now suppose that $\Delta_L S = 0$. 
    Using Proposition \ref{prop:hess}, we get
    \begin{align*}
\partial^2_\lambda|_{\lambda=0}\kappa(\lambda)&=\frac{3}{2}\|\pi_2^*S\|^2
        -\frac {1} {4}\int_{S^0M}\mathrm{tr}\big((\partial_{\la}|_{\la=0}R_\la(v))^2 \big)dm_0(v)
        -\frac {1}{6\mathrm{vol}(M)}\partial_{\la}^2|_{\la=0}\mathcal S(g_\la).  
    \end{align*}
Now, we use \cite[Proposition 4.55]{Bes} to compute the Hessian of the total scalar curvature $\mathcal  S$ evaluated at a trace-free, divergence-free tensor $S:$
$$\partial_{\la}^2|_{\la=0}\mathcal S(g_\la)=\langle S, -\tfrac 12 \nabla^*\nabla S+S\rangle=-2\|S\|^2_{L^2(M;S^2T^*M)}, $$
where we used that $\nabla^* \nabla S- 6 S = \Delta_L S = 0$. 
Finally, we use Corollary \ref{corr2} with $\mu=0$ to get
\begin{align*}
\partial^2_\lambda|_{\lambda=0}\kappa(\lambda)&= \frac{3}{2}\|\pi_2^*S\|^2
        -\frac {1} {2}\|\pi_2^*S\|^2-\frac {1}{3\mathrm{vol}(M)}\|S\|^2
        +\frac {1}{3\mathrm{vol}(M)}\|S\|^2 =\|\pi_2^*S\|^2.
        \end{align*}
Using \eqref{eq:kappa}, this forces $\|\pi_2^*S\|^2=0$, and thus $S=0$ which completes the proof.   
\end{proof}

\begin{proof}[Proof of Theorem \ref{kappa-saddle}]
Let $S$ be a TT tensor.
Let $(g_\la)_{\la\in (-\epsilon,\epsilon)}$ be a perturbation of $g_0$ such that $\partial_\la|_{\la=0}g_\la=S$ and $\partial_{\lambda} |_{\lambda = 0} {\rm Ric}_{\lambda}(v) = \mu S$ for some $\mu \in \R$.
Using Proposition \ref{prop:hess}, Corollary \ref{corr2}, and \cite[Proposition 4.55]{Bes}, we obtain 
\begin{align*}
\partial^2_\lambda|_{\lambda=0}\kappa(\lambda)&=\left(\frac{3}{2}
    +\frac{\mu}{2}
        +\frac {\mu+1} {2}-\frac {(\mu+2)^2} {4}\right)\|\pi_2^*S\|^2+\left(-\frac{(\mu+2)^2}{12\mathrm{Vol}(M)}
        +\frac {\mu+2}{6\mathrm{Vol}(M)}\right)\|S\|^2\\
        &=(\mu+2)\left(\frac{(2-\mu)}{4}\|\pi_2^*S\|^2-\frac{\mu}{12\mathrm{Vol}(M)}\|S\|^2\right).
    \end{align*}

Using \cite[Proof of Theorem C]{Fla} (see also \cite{Maubon}), there exists a hyperbolic manifold $(M,g_0)$ and a variation $(g_\lambda)_{\lambda\in (-\epsilon,\epsilon)}$ such that $\partial_\la |_{\lambda=0}\mathrm{Ric}_\lambda=\mu \partial_{\lambda}|_{\lambda=0}g_\lambda$ for $\mu\in [-\tfrac 32,0)$. For this variation, the previous computation shows that $\partial^2_\lambda|_{\lambda=0}\kappa(\lambda)>0$, which shows that $g_0$ is not a local maximum of $\kappa$.
By \cite{Ka}, if $g$ is a negatively curved metric conformally equivalent to $g_0$ of the same total volume, then $h_{\mathrm{Liou}}(g) < h_{\mathrm{Liou}}(g_0)$ for $g\neq g_0$. By \cite{OS84}, we have $\kappa(g) \leq h_{\rm Liou} (g)$.
Since $\kappa(g_0) = h_{\rm Liou}(g_0)$, we conclude that $\kappa(g) < \kappa(g_0)$.
Thus, $\kappa$ does not have a local maximum or local minimum at $g_0$. 
\end{proof}

\bibliography{ref}{}
\bibliographystyle{alpha}

\end{document}